\theoremstyle{plain}
\newtheorem{thm}{Theorem}[section]
\newtheorem{lem}[thm]{Lemma}
\newtheorem{pro}[thm]{Proposition}
\newtheorem{cor}[thm]{Corollary}
\newtheorem{exa}[thm]{Example}
\newcommand{\Ker}{\operatorname{Ker}}
\newcommand{\Irr}{\operatorname{Irr}}
\newcommand{\SL}{\operatorname{SL}}
\newcommand{\FF}{\mathbb{F}}
\newcommand{\tV}{\tilde{V}}
\newcommand{\tU}{\tilde{U}}
\newcommand{\bG}{\bar{G}}
\newcommand{\cn}{\operatorname{cn}}
    \def \mod#1{\, {\rm mod} \, #1 \, }
\newcommand{\tw}[1]{{}^#1\!}
\newcommand{\diag}{{\mathrm {diag}}}
\newcommand{\codim}{{\mathrm {codim}}}
\newcommand{\Res}{{\mathrm {Res}}}
\newcommand{\AAA}{{\sf A}}
\newcommand{\QQ}{{\mathbb Q}}
\newcommand{\ZZ}{{\mathbb Z}}
\newcommand{\GC}{{\mathcal G}}
\newcommand{\CL}{{\mathcal C}}
\newcommand{\TC}{{\mathcal T}}
\newcommand{\EC}{{\mathcal E}}
\newcommand{\GCF}{\GC^{F}}
\newcommand{\eps}{\epsilon}
\newcommand{\al}{\alpha}
\newcommand{\gam}{\gamma}
\newcommand{\lam}{\lambda}
\newcommand{\om}{\omega}
\newcommand{\supp}{{\mathrm {supp}}}
\newcommand{\ppd}{{\mathrm {ppd}}}
\newcommand{\St}{{\sf {St}}}
\renewcommand{\mod}{\bmod \,}
\begin{document}

\title{Effective Results on the Waring Problem for Finite Simple Groups}
\author{Robert M. Guralnick}
\address{Department of Mathematics, University of Southern California,
Los Angeles, CA 90089-2532, USA}
\email{guralnic@usc.edu}
\author{Pham Huu Tiep}
\address{Department of Mathematics, University of Arizona, Tucson,
AZ 85721-0089, USA}
\email{tiep@math.arizona.edu}

\keywords{Waring problem, simple groups of Lie type}

\subjclass[2000]{20D05, 20C33}

\thanks{The authors are grateful to Gunter Malle for helpful comments on the paper,
in particular for a suggestion that improves part of the proof of Proposition
\ref{values}.}

\thanks{The authors were partially supported by the NSF 
(grants DMS-1001962, DMS-0901241, and DMS-1201374). 
The first author was also partially supported by  the Simons Foundation Fellowship 224965}

\begin{abstract}
Let $G$ be a finite quasisimple group of Lie type. We show that there are 
regular semisimple elements $x,y \in G$, $x$ of prime order, and $|y|$ is 
divisible by at most two primes, such that 
$x^G \cdot y^G \supseteq G \setminus Z(G)$. In fact in all but four cases,
$y$ can be chosen to be of square-free order. Using this result, we prove
an effective version of one of the main results of \cite{LST1} by showing
that, given any integer $m \geq 1$, if the order of a finite simple
group $S$ is at least $m^{8m^{2}}$, then every element in $S$ is a product of two 
$m^{\mathrm {th}}$ powers. Furthermore, the verbal width of $x^m$ on any finite simple
group $S$ is at most $80m \sqrt{2\log_2 m}+56$.
We also show that, given any two non-trivial words
$w_1$, $w_2$, if $G$ is a finite quasisimple group of large enough order,
then $w_1(G)w_2(G) \supseteq G \setminus Z(G)$.   
\end{abstract}

\maketitle

\section{Introduction}

Let $G$ be a finite non-abelian simple group, or more generally, a finite
quasisimple group (that is, $G = [G,G]$ and $G/Z(G)$ is simple). Recently, 
various problems involving $G$, such as Waring-type problems and generation
problems, cf. for instance \cite{MSW}, \cite{LST1}, \cite{GM},  
have been resolved, crucially relying on the fact that every non-central element
of $G$ is a product of conjugates of two specific elements in $G$.   
Building on earlier work of \cite{MSW}, \cite{LST1}, and \cite{GM}, we prove
the following refinement of these results on covering non-central elements in 
finite quasisimple groups of Lie type:

\begin{thm}\label{main1}
{\sl Let $\GC$ be a simple simply connected algebraic group in characteristic 
$p > 0$ and let $F~:~\GC \to \GC$ be a generalized Frobenius endomorphism such that 
$G := \GC^F$ is quasisimple. Then there exist (not necessarily distinct)
primes $r,s_1,s_2 \neq p$ and 
regular semisimple elements $x,y \in G$ such that $|x| = r$, 
$y$ is an $\{s_1,s_2\}$-element, and $x^G\cdot y^G \supseteq G \setminus Z(G)$.
In fact $s_1 = s_2$ unless $\GC$ is of type $B_{2n}$ or $C_{2n}$.
Moreover, if 
$$G \notin \{ SL_2(5), SL_2(17), Sp_4(3), Spin_9(3)\}$$ 
then the order of $y$ can also be chosen to be square-free.}
\end{thm}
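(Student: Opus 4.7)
The plan is to use the Frobenius structure-constant formula: a non-central $g \in G$ lies in $x^G \cdot y^G$ iff
\[
\Sigma(x,y,g) := \sum_{\chi \in \Irr(G)} \frac{\chi(x)\chi(y)\overline{\chi(g)}}{\chi(1)} \neq 0,
\]
and since the trivial character contributes $1$, it suffices to force $|\Sigma(x,y,g) - 1| < 1$ for every $g \in G \setminus Z(G)$. The underlying strategy is to pick $x,y$ regular semisimple in two complementary maximal tori of $\GCF$, so that Deligne--Lusztig theory rigidly restricts which irreducible characters can be nonzero on $x$ and on $y$, and the tail of the character sum collapses to a convergent geometric series in $q^{-1}$ times a polynomial in the rank.

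For each Lie type, the plan is to produce the prime $r$ via a Zsigmondy (primitive-prime-divisor) theorem: I would take $x$ inside a torus $T_1 \le \GCF$ whose order is divisible by a Zsigmondy prime $r$ of $q^e - 1$ for a large $e$ (the Coxeter number, or $2n$ for type $D_n$, $n$ for ${}^2 A_{n-1}$, etc.), so that automatically $|x| = r$, $x$ is regular semisimple and lies in a unique maximal torus, and the nontrivial characters that do not vanish on $x$ form a short list (the rational Lusztig series indexed by its class in the dual group $\GD$). For $y$, I would select a second torus $T_2$ whose order is, up to a small index, a prime power; in types $A_n, {}^2 A_n, D_n, {}^2 D_n$, the odd-$n$ cases of $B_n, C_n$, and all exceptional types, the arithmetic of $|T_2|$ accommodates a single prime $s = s_1 = s_2$, whereas in types $B_{2n}$ and $C_{2n}$ the natural large tori have orders that factor nontrivially into two Zsigmondy-prime parts, which is precisely why the statement must allow $s_1 \ne s_2$ there.

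With the choices $x,y$ in hand, the covering step then reduces to the bound $|\Sigma(x,y,g)-1| < 1$, which I would establish by splitting $\Sigma - 1$ into contributions from characters in the two Lusztig series $\EC(G,[x])$ and $\EC(G,[y])$, where the Deligne--Lusztig character formula gives a uniform absolute bound on $|\chi(x)|,|\chi(y)|$ in terms of the Weyl groups of the centralisers $C_\GD(x), C_\GD(y)$, and a remainder which vanishes by orthogonality of Deligne--Lusztig characters at regular semisimple elements. Plugging in the character-level bound $|\chi(g)| \le \chi(1)^{1-\epsilon}$ from \cite{LST1, GM} and the lower bound $\chi(1) \gg q^{\rank \GC}$ for all but finitely many low-dimensional characters yields the required inequality outside a finite list of small groups.

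The hard part will be the residual small-rank, small-$q$ range where no Zsigmondy prime of the requisite shape exists, or where the candidate prime divides $|Z(\GCF)|$ and is lost on passing to the quasisimple quotient; here the uniform estimates above break down and one must proceed ad hoc. A case-by-case inspection should show that these arithmetic obstructions can still be overcome in all cases by sacrificing one of the refinements, but that they force a repeated prime factor in $|y|$ exactly for $\SL_2(5), \SL_2(17), \Sp_4(3)$, and $\mathrm{Spin}_9(3)$; for these four groups I would verify the un-refined statement by direct character-table computation. The separate bookkeeping for $B_{2n}$ and $C_{2n}$, confirming that two distinct primes $s_1 \ne s_2$ genuinely suffice and that the resulting $y$ is regular semisimple, is then a straightforward parallel application of the same character-sum estimate.
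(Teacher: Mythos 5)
Your overall strategy (structure constants, Zsigmondy primes, two well‑chosen tori, $B_{2n}/C_{2n}$ needing two primes) tracks the paper, but there are two substantive gaps. First, you propose to show $|\Sigma(x,y,g)-1|<1$ for \emph{every} non‑central $g$, but that estimate actually fails for non‑regular semisimple $g$: since $x,y$ are regular semisimple, $|\St(x)|=|\St(y)|=1$, yet $|\St(g)/\St(1)|=|C_G(g)|_p/|G|_p$ can be close to $1$ when $C_G(g)$ is large, so the Steinberg term alone can swamp the bound. The paper sidesteps this by first invoking Gow's theorem (cf.\ \cite[Lemma 5.1]{GT}): for $x,y$ regular semisimple, $x^G\cdot y^G$ already contains every non‑central \emph{semisimple} element, so the character-ratio inequality only needs to be verified for non‑semisimple $g$, where $\St(g)=0$. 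Without this reduction the method as you describe it does not close.

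Second, the mechanism that "rigidly restricts which characters can be nonzero" is \emph{weak orthogonality} of $T_1$ and $T_2$ (\cite[Definition 2.2.1, Proposition 2.2.2]{LST1}), whose conclusion is that any $\chi$ nonzero on both $x$ and $y$ must be \emph{unipotent} — not that it lies in the Lusztig series $\EC(G,[x])$ or $\EC(G,[y])$ as you suggest; the correct constraint is that $\chi\in\EC(G,(t))$ with $r$ and $s$ both dividing $|C_{\GD}(t)|$, which forces $t=1$. You also don't flag the key difficulty the paper has to overcome, namely type $D_n$ with $n$ even: there the pair of tori in \cite{LST1,GM} contains an element of non–prime-power order, so the paper introduces the new pair $T^{+,+}_{n-1,1}$, $T^{-,-}_{n-1,1}$, proves their weak orthogonality (Lemma \ref{tori}), and pins down the exact list of four unipotent characters nonzero on both $x$ and $y$ (Proposition \ref{values}); this is the main new content. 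Finally, identifying the exceptional list $\{SL_2(5),SL_2(17),Sp_4(3),Spin_9(3)\}$ is not something your outline derives — it requires the explicit small-case analysis the paper carries out in Lemmas \ref{sl2}--\ref{sp}.
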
 

\begin{cor}\label{simple1}
{\sl Let $\GC$ be a simple simply connected algebraic group in positive 
characteristic and let $F~:~\GC \to \GC$ be a generalized Frobenius endomorphism such that 
$G := \GC^F$ is quasisimple. Assume in addition that 
$$G \notin \{ SL_2(5), SL_2(17), Sp_4(3)\}.$$ 
Then there exist (not necessarily distinct) 
primes $r,s_1,s_2$ with the following properties:

{\rm (i)} There are elements $x,y \in G$ of 
square-free order such that $x$ is regular semisimple of order $r$, 
$y$ is an $\{s_1,s_2\}$-element, and 
$x^G\cdot y^G \supseteq G \setminus Z(G)$.
In fact $s_1 = s_2$ unless $\GC$ is of type $B_{2n}$ or $C_{2n}$. Furthermore,
$y$ can also be chosen to be regular semisimple unless possibly 
$G \cong Spin_9(3)$.

{\rm (ii)} For any elements $a,b,c \in G$ of order $r$,
$a^G \cdot b^G \cdot c^G = G$.}
\end{cor}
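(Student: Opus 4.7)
The plan for (i) follows almost immediately from Theorem~\ref{main1}: for every $G$ satisfying the hypothesis of the corollary with $G \not\cong Spin_9(3)$, Theorem~\ref{main1} already supplies a pair $(x,y)$ in which $x$ is regular semisimple of prime order $r$ and $y$ is regular semisimple of square-free $\{s_1,s_2\}$-order (with $s_1 = s_2$ outside types $B_{2n}$ and $C_{2n}$), satisfying $x^G \cdot y^G \supseteq G \setminus Z(G)$. Only the case $G \cong Spin_9(3)$ requires extra work: for this single group I would keep the $x$ from Theorem~\ref{main1} but choose a new $y$ of square-free order at the cost of dropping the regular-semisimple property. Its existence is verified by a direct computation with the character table of $Spin_9(3)$ via the Frobenius class-product formula, matching the ``unless possibly $G \cong Spin_9(3)$'' caveat in the statement.

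For (ii), the plan is to combine part (i) with a structural feature of $r$ that is built into the construction in Theorem~\ref{main1}: the prime $r$ can be arranged so that every element of order $r$ in $G$ is regular semisimple with centralizer in a single $G$-orbit of cyclic maximal tori. Consequently each of $a,b,c$ is conjugate to some power $x^i, x^j, x^k$ of the element $x$ of part (i), so $a^G, b^G, c^G$ coincide with $(x^i)^G, (x^j)^G, (x^k)^G$. The argument then splits in two stages: first $a^G b^G c^G \supseteq G \setminus Z(G)$, and then $Z(G) \subseteq a^G b^G c^G$. For the first stage, given $g \in G \setminus Z(G)$, part (i) yields $g = u v$ with $u \in x^G$ and $v \in y^G$; a direct class-product computation inside a maximal torus containing $x$ then factors $v$ as a product of two elements of order $r$, which, after absorbing the power-map discrepancy into the first factor, land in $b^G$ and $c^G$.

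The main obstacle is the second stage, covering $Z(G)$. For $z \in Z(G)$, Frobenius' structure-constant formula writes the number of triples $(a',b',c') \in a^G \times b^G \times c^G$ with $a'b'c' = z$ as
\[ N(z) \;=\; \frac{|a^G|\,|b^G|\,|c^G|}{|G|}\sum_{\chi \in \Irr(G)} \frac{\chi(a)\chi(b)\chi(c)\overline{\chi(z)}}{\chi(1)}, \]
so $N(z) > 0$ reduces to the character-sum bound $\sum_{\chi \neq 1_G}|\chi(a)\chi(b)\chi(c)|/\chi(1) < 1$. I would attack this via Liebeck--Shalev-style bounds on character ratios at regular semisimple elements, combined with Deligne--Lusztig estimates, and handle the finitely many low-rank or small-$q$ exceptions by explicit computation. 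This character-sum estimate for central $z$ is the principal technical step in the proof.
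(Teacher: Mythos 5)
Your part (i) follows the paper's own route: Theorem~\ref{main1} supplies the pair $(x,y)$ of square-free orders in all but the four exceptional groups, and Lemma~\ref{sp} handles $Spin_9(3)$ by exhibiting a $y$ of square-free order $39$ that is not semisimple, exactly as you predicted.

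Your part (ii) has a genuine gap: you never invoke the Gow-type result stated at the beginning of \S2 (for regular semisimple $u,v\in G$, the set $u^G\cdot v^G$ contains every non-central semisimple element of $G$), which is the entire content of Lemma~\ref{23} and makes (ii) a two-line deduction. Given $a,b,c$ of order $r$, all regular semisimple by the choice of $r$, Gow gives $y\in b^G\cdot c^G$ for the regular semisimple $y$ from part (i), so $a^G\cdot b^G\cdot c^G\supseteq a^G\cdot y^G\supseteq G\setminus Z(G)$; and for $z\in Z(G)$ the element $zc^{-1}$ is non-central regular semisimple, so $zc^{-1}\in a^G\cdot b^G$ by Gow again and $z\in a^G\cdot b^G\cdot c^G$. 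No Frobenius structure-constant estimate is needed, and the one you propose as the ``principal technical step'' for central $z$ would in fact fail. Since $a,b,c$ all lie in conjugates of the \emph{same} maximal torus $T_1$, there is no pair of weakly orthogonal tori in play, so the vanishing mechanism of \cite[Proposition 2.2.2]{LST1} that the paper uses to reduce to a handful of unipotent characters is unavailable: many non-unipotent characters contribute. Already for $G=SL_2(q)$ with $a=b=c$ of order $r\mid(q-1)$, about $(q-3)/2$ principal-series characters of degree $q+1$ take the generically nonzero value $\alpha(a)+\alpha(a)^{-1}$ at $a$, and $\sum_{\chi\ne 1_G}|\chi(a)\chi(b)\chi(c)|/\chi(1)$ comes out comfortably above $1$; positivity of the actual signed sum rests on cancellation that an absolute-value bound cannot see. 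Your stage-one sketch is also off: $v$ is a regular semisimple element of order $s$ in a conjugate of $T_2$, not of $T_1$, so a ``class-product computation inside a maximal torus containing $x$'' does not touch it, and the assertion that $v$ factors as a product of two conjugates of $b$ and $c$ is again precisely what Gow's theorem delivers. Likewise the detour through powers of $x$ is unnecessary: condition (ii) of Lemma~\ref{23} is established for every element of order $r$ because, by Deligne--Lusztig, the relevant unipotent character values on regular semisimple elements of $T_1$ depend only on the torus.
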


\begin{cor}\label{simple2}
{\sl Let $S$ be a finite non-abelian simple group.
Then there exist (not necessarily distinct)
primes $r,s_1,s_2$ and elements $x,y \in S$ such that 

{\rm (i)} $|x| = r$,

{\rm (ii)} either $|y| = s_1$, or $S \in \{ PSp_{4n}(q), \Omega_{4n+1}(q)\}$, 
$s_1 \neq s_2$ and $|y| = s_1s_2$,

{\rm (iii)} $x^S\cdot y^S \supseteq S \setminus \{1\}$, 
$x^S \cdot x^S \cdot x^S = S$.\\
Moreover, if $S$ is of Lie-type then $x$ and $y$ can be chosen to
be regular semisimple.}
\end{cor}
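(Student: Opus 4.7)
The plan is to reduce to Corollary \ref{simple1} in the main Lie-type case and to dispatch non-Lie-type simple groups separately.

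Suppose first that $S$ is simple of Lie type, and let $G = \GC^F$ be a simply connected cover with $S = G/Z(G)$. Provided $G \notin \{SL_2(5), SL_2(17), Sp_4(3)\}$, Corollary \ref{simple1} furnishes primes $r, s_1, s_2$ and regular semisimple $x, y \in G$ of square-free order with $|x| = r$, $y$ an $\{s_1, s_2\}$-element, $s_1 = s_2$ unless $\GC$ has type $B_{2n}$ or $C_{2n}$, and satisfying both $x^G y^G \supseteq G \setminus Z(G)$ and $x^G \cdot x^G \cdot x^G = G$ (the latter by specializing part (ii) to $a=b=c=x$). Let $\pi: G \twoheadrightarrow S$ and set $\bar x = \pi(x)$, $\bar y = \pi(y)$. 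Since $x \notin Z(G)$ and $|x| = r$ is prime, $|\bar x| = r$; likewise $\bar y \neq 1$ has square-free order, so $|\bar y| \in \{s_1, s_2, s_1 s_2\}$. When $s_1 = s_2$ the first option of (ii) holds; otherwise $S \in \{\Omega_{4n+1}(q), PSp_{4n}(q)\}$ and both single-prime and two-prime values of $|\bar y|$ fit (ii). Surjectivity of $\pi$ transports the two containments in $G$ to the required ones in $S$.

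The three exceptional small covers yield $S \in \{A_5, PSL_2(17), PSp_4(3)\}$: $A_5$ falls under the alternating case, while the latter two admit a direct verification from their character tables; the potential failure of regular semisimplicity of $y$ in the residual case $G = Spin_9(3)$ is handled by a direct choice of a regular semisimple $y$ in $S = \Omega_9(3)$. For $S = A_n$ with $n \geq 5$, one exhibits explicit $x, y$ of prime order (for instance an $r$-cycle for a prime $r$ close to $n$, in the spirit of \cite{MSW}) and verifies the covering and triple-product properties by character estimates on symmetric/alternating groups. For $S$ sporadic, the statement is a finite check using the ATLAS.

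The main technical point is confirming that $\bar x$ (and $\bar y$) remain regular semisimple in $S$. The preimage in $G$ of $C_S(\bar x)$ is $\{g \in G : [g, x] \in Z(G)\}$, which coincides with $C_G(x)$ as long as no $G$-conjugate of $x$ has the form $xz$ with $1 \neq z \in Z(G)$. Since $|x| = r$ is prime, a comparison of element orders makes this automatic whenever $\gcd(r, |Z(G)|) = 1$, a condition that can be arranged by the choice of $r$ in the proof of Theorem \ref{main1}, given the bounded size of $Z(G)$. I anticipate this compatibility between $r$ and $|Z(G)|$, rather than any of the combinatorics, to be the main obstacle.
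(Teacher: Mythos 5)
Your descent from $G = \GC^F$ to $S = G/Z(G)$ in the Lie-type case is correct and essentially parallels the logic implicit in the paper: the covering $x^G \cdot y^G \supseteq G \setminus Z(G)$ and the triple-product $x^G \cdot x^G \cdot x^G = G$ from Corollary \ref{simple1} push down to $S$ under the surjection $\pi$, and the three excluded covers together with $Spin_9(3)$ are handled by the explicit data in Lemmas \ref{sl2} and \ref{sp}. However, the worry you flag as the main obstacle is not one: the image of a regular semisimple element of $\GC^F$ under the central isogeny to the adjoint quotient is automatically regular semisimple (the connected centralizer in $\GC_{\mathrm{ad}}$ is the image of the maximal torus $C_\GC(x)$), so no $\gcd(r,|Z(G)|)$ hypothesis is needed, and the centralizer-index computation you sketch is irrelevant to the claim being made. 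Incidentally, the order analysis for $\bar y$ is simpler than you suggest: in the $B_{2n}/C_{2n}$ case $|Z(G)| \in \{1,2\}$ while $s_1,s_2$ are odd, so $|\bar y| = s_1 s_2$ always; the ``single-prime fallback'' you mention never occurs.

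The genuine gap is in the alternating case. You propose to exhibit an $r$-cycle for a prime $r$ close to $n$ and prove $x^S \cdot y^S \supseteq S \setminus \{1\}$ and $x^S\cdot x^S \cdot x^S = S$ ``by character estimates,'' but you give no indication of how such estimates would be carried out, and this is not the route the paper takes. The paper proves Lemma \ref{alt}, whose engine is Bertram's combinatorial theorem \cite{B}: every element of $A_n$ is a product of two $r$-cycles whenever $r \geq \lfloor 3n/4 \rfloor$. One then needs a prime $r$ in $[\lfloor 3n/4 \rfloor, n-2]$ (so the $r$-cycles form a single real $A_n$-class), which exists for $n \geq 33$ by density and for the remaining small $n$ by inspection, with $n \in \{6,8,11,12\}$ checked directly in GAP. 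Real-ness of $x$ (or $x^{-1} \in x^S\cdot x^S$) together with $x^S\cdot(x^{-1})^S = S$ then gives the triple-product claim, a point your sketch omits. A character-estimate approach to the same conclusion for $A_n$ is plausible but considerably more delicate, and as written your proposal does not supply the argument that the paper's Lemma \ref{alt} does.
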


Note that a (slightly weaker) 
version of Theorem \ref{main1} also holds for $r=s=p$: every non-central
element of $G$ is a product of two unipotent elements, cf. 
\cite[Corollary, p. 3661]{EG}. Furthermore, in a sense Corollary \ref{simple2}
yields another approximation towards Thompson's conjecture (which states that every 
finite non-abelian simple group $S$ possesses a conjugacy class $C$ such that
$C^2 = S$). We also note that an asymptotic version of Corollary 
\ref{simple2}(iii) was established in \cite[Corollary 2.3]{Sh}: {\it Every large 
enough finite simple group $S$ has a conjugacy class $C$ such that $C^3 = S$}.  
 
\smallskip
Theorem \ref{main1} allows us to prove the following effective version of the main 
result of \cite{LST1} for the Waring problem in the case of powers:

\begin{thm}\label{main2}
{\sl Let $k, l \geq 1$ be any two integers and let $m := \max(k,l)$. If $S$ 
is any finite simple group of order at least $m^{8m^2}$, then every element in 
$S$ can be written as $x^k\cdot y^l$ for some $x,y \in S$.}
\end{thm}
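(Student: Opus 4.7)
The plan is to deduce Theorem \ref{main2} directly from Theorem \ref{main1} by arranging the primes $r, s_1, s_2$ produced there to all exceed $m := \max(k, l)$.

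\emph{Reduction to prime avoidance.} If $g \in G$ has order coprime to $j$ and $tj \equiv 1 \pmod{|g|}$, then $g = (g^t)^j$ is a $j$-th power in $\langle g \rangle$; since $(g^h)^j = (g^j)^h$, the conjugacy class $g^G$ consists entirely of $j$-th powers. Hence, if Theorem \ref{main1} can be applied with $\gcd(r,k) = \gcd(s_1 s_2, l) = 1$, then every non-central element $g \in G = \GC^F$ equals $x^a \cdot y^b$ with $x^a$ a $k$-th power and $y^b$ an $l$-th power. Reducing modulo $Z(G)$ covers $S \setminus \{1\}$, while the identity equals $1^k \cdot 1^l$. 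Since every prime divisor of $k$ or $l$ is at most $m$, it suffices to arrange $r, s_1, s_2 > m$.

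\emph{Lie type.} The primes $r, s_i$ in Theorem \ref{main1} arise as primitive prime divisors of $q^d - 1$ for integers $d$ attached to certain maximal tori of $\GC^F$; any such prime satisfies $r \equiv 1 \pmod d$, hence $r > d$. If the rank of $S$ exceeds $m$, I rerun the construction with $d > m$ and obtain $r, s_1, s_2 > m$ for free. If the rank is at most $m$, then $|\GC^F| \leq q^{2m^2 + O(m)}$ uniformly in type, so the hypothesis $|S| \geq m^{8m^2}$ forces $q \geq m^c$ for a fixed $c > 1$ (roughly $c = 4$ for large $m$). In this regime I need some $\Phi_d(q)$ with $d \leq \mathrm{rank}$ to possess a prime factor exceeding $m$. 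Using the classical fact that $p \mid \Phi_d(q)$ forces either $p \mid d$ or $d$ to equal the multiplicative order of $q$ modulo $p$, the total contribution of primes $p \leq m$ to $\prod_{d \leq \mathrm{rank}} \Phi_d(q)$ is much smaller than this product itself when $q \geq m^c$; hence some $\Phi_d(q)$ carries a prime factor $> m$, which can be fed into the construction of Theorem \ref{main1}.

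\emph{Alternating and sporadic groups.} For $S = A_n$, $|A_n| \geq m^{8m^2}$ forces $n$ large enough that Bertrand's postulate supplies odd primes $r < s$ in $(m, n]$; I take $r$- and $s$-cycles (both lying in $A_n$) and invoke the covering theorems of \cite{LST1}, \cite{MSW}, or Corollary \ref{simple2}. Once $m$ exceeds a small threshold, $m^{8m^2}$ surpasses the order of the Monster and rules out every sporadic group; the finitely many remaining sporadic and small-Lie-type cases are dispatched individually.

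\emph{Main obstacle.} The principal difficulty is the small-rank, large-$q$ Lie type regime, where one must quantitatively rule out the possibility that every $\Phi_d(q)$ with $d \leq \mathrm{rank}$ is $m$-smooth. Pinning down the precise constant $8$ in the exponent $m^{8m^2}$ is a careful bookkeeping exercise on top of this prime-avoidance analysis, and must accommodate the excluded small groups $\SL_2(5)$, $\SL_2(17)$, $\Sp_4(3)$, $\mathrm{Spin}_9(3)$ from Theorem \ref{main1}.
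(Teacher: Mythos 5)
Your framework — reduce to forcing the primes of Theorem~\ref{main1} above $m$, handle $\AAA_n$ via Bertrand, and discard the sporadics for $m$ large — is in the right spirit, but the proposal has three genuine gaps.

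First, you skip the easy reductions that the paper uses to dispose of small $m$: when $\gcd(k,l)=1$ the claim is trivial (write $1=ak+bl$), and when $\gcd(k,l)>1$ and $m\le 5$ the number $m$ is a prime power equal to $\mathrm{lcm}(k,l)$, so \cite[Corollary 1.5]{GM} finishes the job. Without this, your ``finitely many remaining sporadic and small-Lie-type cases are dispatched individually'' is not a proof, since for $m=2$ the threshold $m^{8m^2}=2^{32}$ is well below the order of the Monster.

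Second, and more seriously, you never treat the case where the defining characteristic $p$ exceeds $m$. When $p>m$ the primitive prime divisors used in Theorem~\ref{main1} need not be large (e.g.\ $\ppd(11,2)=3$), so your scheme of making $r,s_1,s_2>m$ can fail outright. The paper sidesteps this by invoking \cite[Corollary, p.~3661]{EG}: every non-central element of $G$ is a product of two unipotent elements, whose orders are $p$-powers and hence coprime to $kl$. You need this case split.

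Third, in the bounded-rank, $p\le m$ regime your smooth-number argument on $\prod_{d\le\mathrm{rank}}\Phi_d(q)$ does not obviously close the loop. Even if some $\Phi_d(q)$ has a prime factor exceeding $m$, that prime must be a primitive prime divisor of $p^{e}-1$ for the \emph{specific} exponents $e$ (like $nf$, $(n-1)f$, $2(n-1)f$, etc.) attached to the weakly orthogonal torus pairs in Theorem~\ref{main1}'s construction; an arbitrary large prime factor of some $\Phi_d(q)$ cannot simply be ``fed into'' that construction. The paper avoids this entirely with a direct contradiction: the construction in Theorem~\ref{main1} guarantees $t:=\min(r,s_1,s_2)>df/2$ in the classical case (and $>df$ for exceptional types, with $d$ the rank and $q=p^f$), so if $t\le m$ then $df\le 2m-1$, and together with $p\le m$ one gets $|S|<q^{d(2d+1)}\le p^{df(2df+1)}<m^{(2m-1)(4m-1)}<m^{8m^2}$, contradicting the hypothesis. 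No lower bound on $q$ and no smoothness estimate is needed; the constant $8$ drops out of this one inequality rather than from any delicate bookkeeping.
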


The main result of \cite{LST1} implies that the {\it width} of the word
$w(x) = x^m$ on any finite non-abelian simple group $S$ is $2$ 
(that is, every element of $S$ is a product of two values of $w$ on $S$),
if $|S|$ is sufficiently large (but no explicit bound is given). 
Theorem \ref{main2} shows in particular that the width of 
$w(x)=x^m$ on any finite simple group $S$ is $2$ if $|S| \geq m^{8m^2}$.  

Without {\it any condition} on $|S|$, Theorem \ref{main2} becomes false -- 
there are various examples, cf. \S3, showing 
that the width of $x^m$ can grow unbounded
even on simple groups $S$ containing non-trivial $m^{\mathrm {th}}$ powers.
However, the width of $x^m$ on any finite simple group $S$ is bounded 
universally, say by $70$, see Corollary \ref{cor:powers}, as long as 
there is a prime $p$ that divides $|S|$ but not $m$. More generally, we prove

\begin{cor}\label{power}
{\sl Let $m \geq 1$ be any integer and let $S$ be any finite simple group 
such that $m$ is not divisible by $\exp(S)$. Then any element of $S$ is a 
product of at most 
$$f(m):= 80m\sqrt{2\log_2 m} + 56$$ 
$m^{\mathrm {th}}$ powers in $S$.}
\end{cor}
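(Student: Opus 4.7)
The plan is a dichotomy based on the size of $S$. If $|S| \geq m^{8m^{2}}$, then Theorem~\ref{main2} (with $k=l=m$) applies directly and gives that every element of $S$ is a product of just two $m^{\mathrm{th}}$ powers; this is well below $f(m)$ since $f(m) \geq 56$ for every $m \geq 1$. Thus the substantial case is $|S| < m^{8m^{2}}$, where the order of $S$ is bounded by a function of $m$ alone.

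For that case, the first reduction uses the hypothesis $\exp(S) \nmid m$ to produce a prime $r$ with $r \mid \exp(S)$ but with some element of $r$-power order that $m$ fails to kill. Refining this, one can locate a non-identity element $z \in S$ of prime order $r'$ coprime to $m$; then $z$ is automatically an $m^{\mathrm{th}}$ power (choose $k$ with $mk \equiv 1 \pmod{r'}$, so $(z^{k})^{m}=z$), and so is every conjugate of $z$. The goal therefore becomes: estimate the number of conjugates of such $z$ needed to express an arbitrary element of $S$.

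The arithmetic target is transparent. From $\log_{2}|S| < 8m^{2}\log_{2} m$ one has $\sqrt{\log_{2}|S|} < m\sqrt{8\log_{2} m}$, so it suffices to prove, for every simple $S$ with $\exp(S) \nmid m$, a universal width bound of the shape
\[
\text{width of } x^{m} \text{ on } S \;\leq\; 40\sqrt{\log_{2}|S|} + 56.
\]
This is established by the classification. For $S$ of Lie type one invokes Theorem~\ref{main1} and Corollary~\ref{simple1}, adjusting the primes $r,s_{1},s_{2}$ to be coprime to $m$ when needed (several admissible choices are available because many primes divide $|S|$), and then applies Liebeck--Shalev-style covering estimates for the regular semisimple classes of prime order coprime to $m$; the natural bound is linear in the Lie rank, and the rank of $S$ is $O(m\sqrt{\log_{2} m})$ in this range. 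For alternating groups $A_{n}$, one uses that some prime $r \leq n$ is coprime to $m$ (since $\exp(A_{n}) = \operatorname{lcm}(1,\dots,n) \nmid m$), that $r$-cycles are $m^{\mathrm{th}}$ powers, and that any element is a product of controlled many cycles, each expressible as a short product of $r$-cycles. The finitely many sporadic cases are absorbed in the additive constant $56$ by direct verification (character table / conjugacy-class data).

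The main obstacle is the uniformity of the constant $40$ across families and the sharpness required in the alternating-group case, where the natural parameter $n$ can be as large as $O(m^{2})$ and a naive linear-in-$n$ width bound would fail. Exploiting the cycle-product structure (rather than the Lie-type BN-pair machinery) to bring the width back down to $O(\sqrt{\log_{2}|A_{n}|})$ is the delicate point, and the final constants $40$ and $56$ are chosen to simultaneously cover the Lie-type, alternating, and sporadic regimes while being compatible with the Theorem~\ref{main2} threshold $m^{8m^{2}}$.
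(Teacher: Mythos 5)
Your overall skeleton is right: the dichotomy by $|S| \geq m^{8m^{2}}$, the application of Theorem~\ref{main2} in the large case, and the observation that $\exp(S) \nmid m$ produces a nontrivial $m^{\mathrm{th}}$ power $z$ in $S$, all match the paper. But for the small case the proposal has a genuine gap, and it is precisely where you flag the "delicate point."

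For groups of Lie type, the paper does something simpler than what you describe. It does not re-run the Theorem~\ref{main1}/Corollary~\ref{simple1} machinery with $m$-coprime primes, nor does it need $z$ to be regular semisimple. Having fixed \emph{any} nontrivial $m^{\mathrm{th}}$ power $x \in S$, it invokes the Lawther--Liebeck diameter bound \cite{LL}: for a simple group of Lie type of untwisted rank $r$, every element of $S$ is a product of at most $40r + 56$ conjugates of $x$ or $x^{-1}$. Combined with $2^{r^{2}} < |S| < m^{8m^{2}}$, this gives $r < m\sqrt{8\log_{2}m}$, hence width $\le 40m\sqrt{8\log_{2} m}+56$. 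Your intermediate target $40\sqrt{\log_{2}|S|}+56$ is consistent with this, but your route to it (special semisimple classes, ad hoc prime choices) is not what is needed and is not clearly realizable in that form.

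The alternating case is where the argument actually breaks. You propose to take a single element $z$ of prime order $r'$ coprime to $m$ (e.g.\ an $r'$-cycle) and write an arbitrary element as a product of "controlled many" $r'$-cycles. But if $r'$ is forced to be small (say $m$ kills all primes between $n/2$ and $n$), an $n$-cycle requires on the order of $n/r'$ many $r'$-cycles, and since $n$ can be as large as $\Theta(m^{2}\log m)$ in the range $|A_{n}| < m^{8m^{2}}$, this blows past $f(m)$. The paper avoids this entirely: Theorem~\ref{thm:alt} shows that under $\exp(A_n)\nmid m$, every element of $A_n$ is a product of at most $8$ $m^{\mathrm{th}}$ powers, a \emph{universal} constant. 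The trick there is not to use a single $r'$-cycle but rather elements that are products of $sp^{a}$ \emph{disjoint} $p$-cycles (where $p^{a+1}\| \exp(A_n)$ and $p\mid \exp(S)/\gcd(\exp(S),m)$); such elements are $m^{\mathrm{th}}$ powers, their squares already cover everything with all nontrivial orbits of size $p$, and then Lemma~\ref{lem:altinv} finishes in a bounded number of steps. Your sketch neither identifies this choice of element nor supplies a substitute, so as written the alternating case is unproved. (The sporadic case is fine either way; the paper cites Zisser's $\cn(S)\le 6$.)
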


Corollary \ref{power} implies that, for any $m \geq 1$, 
the {\it verbal width} of the word $x^m$
on any finite simple group $S$ is at most $f(m)$ (i.e.
any element of the subgroup $\langle g^m \mid g \in S\rangle$ is a product of
at most $f(m)$ $m^{\mathrm {th}}$ powers in $S$). Thus 
Corollaries \ref{power} and \ref{cor:powers} yield effective versions of the 
main results of \cite{MZ} and \cite{SW}. For arbitrary finite groups,
the verbal width of the word $x^m$ on any $d$-generated finite group is 
bounded universally by an (implicit) function of $m$ and $d$, 
see \cite[Theorem 1]{NS}.   
   
\smallskip
For an arbitrary word $w \neq 1$, the main result of \cite{LST2} shows
that the width of $w$ on any finite quasisimple group $G$ is at most $3$
(that is, every element in $G$ is a product of at most $3$ values of $w$ on
$G$), if $|G|$ is sufficiently large. It remained an open question whether 
every {\it non-central} element of $G$ is a product of at most $2$ values of
$w$ on $G$. Our next result answers this question in the affirmative:   

\begin{thm}\label{main3}
{\sl {\rm (i)} Let $w \in F_d$ be a non-trivial word in the free group on $d$
generators. Then there exists a constant $N = N_w$ depending on $w$
such that for all finite quasisimple groups $G$ of order greater than $N$
we have $w(G)^2 \supseteq G \setminus Z(G)$.

{\rm (ii)} Let $w_1,w_2 \in F_d$ be two non-trivial words in the free group on $d$
generators. Then there exists a constant $N = N_{w_1,w_2}$ depending on $w_1$ and 
$w_2$ such that for all finite quasisimple groups $G$ of order greater than $N$
we have $w_1(G)w_2(G) \supseteq G \setminus Z(G)$.}
\end{thm}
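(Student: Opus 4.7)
Statement (i) is the specialization of (ii) to $w_1 = w_2 = w$, so I focus on part (ii). Since only finitely many quasisimple groups can be absorbed into $N$, it suffices to treat the two infinite families of large quasisimple groups: covers of alternating groups, and groups of Lie type. The alternating case can be handled by a separate argument based on the character-bound machinery of Larsen--Shalev on $S_n$: for $n$ large enough, $w_i(A_n)$ contains every sufficiently generic cycle type, in particular the elements furnished by Corollary \ref{simple2} applied to $A_n$ and its double cover.

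For the main case, let $G = \GC^F$ be a large quasisimple group of Lie type. Theorem \ref{main1} supplies regular semisimple elements $x, y \in G$ with $|x|$ a prime and $|y|$ a product of at most two primes, such that
\[
x^G \cdot y^G \;\supseteq\; G \setminus Z(G).
\]
Since every $w_i(G)$ is a union of conjugacy classes, once we know $x^G \subseteq w_1(G)$ and $y^G \subseteq w_2(G)$ for $|G| \ge N(w_1,w_2)$, we obtain
\[
w_1(G)\cdot w_2(G) \;\supseteq\; x^G\cdot y^G \;\supseteq\; G \setminus Z(G),
\]
as required. The task therefore reduces to proving the following claim: for each non-trivial word $w$, there is $N(w)$ such that whenever $|G| \ge N(w)$, every regular semisimple class of the type produced by Theorem \ref{main1} already meets $w(G)$.

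This last claim is the main obstacle, and I would attack it by a character-sum argument. Let $N_w(z)$ denote the number of $d$-tuples $\mathbf{g}\in G^d$ with $w(\mathbf{g}) = z$; a Frobenius-type expansion as developed in \cite{LST2} yields an identity of the form
\[
\frac{N_w(z)}{|G|^{d-1}} \;=\; 1 \;+\; \sum_{\chi \ne 1_G}\chi(z)\, a_\chi(w),
\]
with coefficients satisfying $|a_\chi(w)| \ll \chi(1)^{-\delta_w}$ for some $\delta_w > 0$ depending only on $w$. Because $C_G(z)$ is a torus of rank bounded by the Lie rank of $\GC$, the Larsen--Shalev character ratio bound gives $|\chi(z)| \le \chi(1)^{1-\alpha}$ with $\alpha > 0$ uniform across $\chi \in \Irr(G)$. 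Hence
\[
\left|\frac{N_w(z)}{|G|^{d-1}}-1\right| \;\ll\; \sum_{\chi\ne 1_G}\chi(1)^{1-\alpha-\delta_w},
\]
and the right-hand side tends to $0$ as $|G|\to\infty$ by Witten zeta estimates (after first enlarging $\delta_w$, if needed, by replacing $w$ by a product of sufficiently many independent copies of itself in disjoint sets of variables and using that $w(G) \cdot w(G) \subseteq w(G)^k$ for suitable $k$). This forces $N_w(z) > 0$, i.e.\ $z \in w(G)$, for all sufficiently large Lie-type $G$, which together with the reduction above completes the proof.
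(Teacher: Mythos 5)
Your reduction fails at the crucial step, and the character-sum argument you propose to rescue it is not available. You want to show that, for $|G|$ large, the \emph{specific} regular semisimple elements $x,y$ produced by Theorem~\ref{main1} lie in $w_1(G)$ and $w_2(G)$ respectively. But containing a particular conjugacy class is not something one can get for a general word: even the much weaker statement $w(G)=G$ fails for many words (e.g.\ $w(x)=x^2$ on any group of even order), and there is no theorem asserting that $w(G)$ contains every regular semisimple class once $|G|$ is large. The ``Frobenius-type expansion'' you invoke,
$\frac{N_w(z)}{|G|^{d-1}} = 1 + \sum_{\chi\neq 1}\chi(z)a_\chi(w)$ with $|a_\chi(w)|\ll\chi(1)^{-\delta_w}$, $\delta_w>0$, is not a theorem of \cite{LST2} or anywhere else: for general words there is an identity of this shape (with Fourier coefficients of the class function $z\mapsto N_w(z)$), but the uniform polynomial decay of those coefficients in $\chi(1)$ is an open problem; the Larsen--Shalev machinery bounds $|w(G)|$ and controls which \emph{types} of elements appear in $w(G)$ via algebraic-geometric arguments (dominance, Lang--Weil), not via such pointwise Fourier estimates for a single word map. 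Your mitigation (replace $w$ by disjoint copies of itself) changes $w(G)$ to a product of $w$-values, i.e.\ it changes the problem; it does not supply the missing bound for the original $w$.

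The paper's actual route avoids this entirely, and the difference is the real content of Section~4. The known ingredient (LST1, Prop.\ 6.3.5--6.3.6) gives that $w_i(G)$ contains \emph{some} regular semisimple element of a specified maximal-torus type, but it gives no control over which element within that type. The new ingredient here is Theorem~\ref{D-type}: for $Spin^+_{2n}(q)$ with $n$ even, \emph{any} regular semisimple elements $x_1\in T_1$, $x_2\in T_2$ of the two designated torus types already satisfy $x_1^G\cdot x_2^G = G\setminus Z(G)$ --- the character values $\chi(x_i)$ on unipotent $\chi$ depend only on the torus type, by Deligne--Lusztig, and these are the only $\chi$ surviving the weak orthogonality filter. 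This independence-of-representative statement is exactly what lets the argument proceed without your unprovable claim. The remaining delicacy, handled in Theorem~\ref{D-case}, is producing such $w_i$-values inside $G$ when $q$ is odd; this is done via the Clifford-algebra embedding of Lemma~\ref{spin}, splitting off a spin subgroup on which $\phi(g)$ acts as $\pm 1$ and invoking Proposition~\ref{most-D} on the complementary piece. None of this is in your sketch, and without Theorem~\ref{D-type} (or some analogous ``any representative of the right type works'' statement) the product $w_1(G)w_2(G)$ cannot be controlled by the Theorem~\ref{main1} pair $(x,y)$ alone.

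Smaller points: you do not address the bounded-rank Lie-type case (the paper uses \cite[Thm.~1.7]{LS} together with \cite[Prop.~6.4.1]{LST1}), nor the fact that the alternating-group case is already covered by \cite[Thm.~3.1]{LST2} rather than by a fresh character-bound argument; and (i) is deduced from (ii) as you say, so that part is fine.
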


As shown in \cite[Corollary 4.3]{LST2}, central elements are real obstructions 
for $w(G)^2$, respectively $w_1(G)w_2(G)$, to coincide with $G$. Furthermore,
there are many non-trivial words $w$ (for instance $w(x) = x^2$) which are 
not surjective on any finite quasisimple group. So in this 
sense, Theorem \ref{main3} is best possible for finite quasisimple groups.

\smallskip
The paper is organized as follows. First we prove Theorems \ref{main1} and 
Corollaries \ref{simple1} and \ref{simple2} in \S2. In \S3 we prove 
Theorem \ref{main2}, Corollary \ref{power}  
and further results on the Waring problem for powers. Finally, Theorem \ref{main3}
is established in \S4.
  
\section{Covering non-central elements in quasisimple groups of Lie type}
This section is devoted to prove Theorem \ref{main1} and Corollaries \ref{simple1},
\ref{simple2}. Keep the notation of
the theorem. Note that if $x,y \in G$ are regular 
semisimple, then a result essentially proved by Gow \cite{Gow}, cf. 
\cite[Lemma 5.1]{GT}, shows that $x^G \cdot y^G$ contains every non-central 
semisimple element of $G$. 
We also record the following observation:

\begin{lem}\label{23}
{\sl In the notation of Theorem \ref{main1}, let $r$ be a prime with 
the following properties:

{\rm (i)} Any element of order $r$ in $G$ is regular semisimple.

{\rm (ii)} For any $x \in G$ of order $r$, there exists a regular semisimple 
element $y \in G$ such that $x^G \cdot y^G \supseteq G \setminus Z(G)$.\\
Then $a^G \cdot b^G \cdot c^G = G$ for any elements $a,b,c \in G$ of order $r$.}
\end{lem}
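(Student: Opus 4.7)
The plan is to dispatch non-central target elements and central target elements by two separate arguments, with Gow's lemma (recorded at the start of Section 2) as the main tool. First observe that by hypothesis (i) each of $a,b,c$ is regular semisimple, and in a non-trivial simply connected simple algebraic group a regular semisimple element has centralizer a maximal torus, hence is not central; so Gow's lemma applies to any pair drawn from $\{a,b,c\}$, giving inclusions of the form $a^G\cdot b^G\supseteq\{\text{non-central semisimple elements of }G\}$, and similarly for the other pairs.

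For an arbitrary non-central $g\in G$, I would invoke hypothesis (ii) with $x:=a$ to obtain a regular semisimple $y\in G$ satisfying $a^G\cdot y^G\supseteq G\setminus Z(G)$, and write $g=a_1 y_1$ with $a_1\in a^G$ and $y_1\in y^G$. Since $y_1$ is regular semisimple and non-central, Gow's lemma applied to the regular semisimple pair $(b,c)$ places $y_1\in b^G\cdot c^G$, and composing gives $g\in a^G\cdot b^G\cdot c^G$.

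For a central target $z\in Z(G)$, the trick is to replace $z$ by $h:=zc^{-1}$. Because $z$ is central, an element commutes with $h$ iff it commutes with $c^{-1}$, so $C_G(h)=C_G(c)$; thus $h$ is semisimple with the same (regular) centralizer as $c$, and $h$ is non-central because $c$ is. Gow's lemma applied to $(a,b)$ yields $h=a_1 b_1$ for some $a_1\in a^G,\ b_1\in b^G$, whence $z=a_1 b_1 c\in a^G\cdot b^G\cdot c^G$.

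The only real obstacle is the central case: one must verify that left-multiplication by a central element preserves the property of being regular semisimple (so that Gow's lemma can be applied to the modified element), which is the content of the centralizer computation $C_G(zc^{-1})=C_G(c)$. Once that routine observation is in hand, the two cases exhaust $G$ and Gow's lemma finishes the argument.
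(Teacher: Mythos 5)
Your proof is correct and follows essentially the same two-case strategy as the paper's: apply hypothesis (ii) with $x=a$ and Gow's lemma for the pair $(b,c)$ to handle non-central targets, then handle a central $z$ by observing that $zc^{-1}$ is non-central semisimple and applying Gow's lemma to the pair $(a,b)$. The only cosmetic difference is that you spell out the centralizer computation $C_G(zc^{-1})=C_G(c)$, which the paper leaves implicit.
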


\begin{proof}
We apply (ii) to $x=a$.
As mentioned above, $y \in b^G \cdot c^G$ since $b$ and $c$ are both 
regular semisimple (and $y$ is certainly non-central semisimple). Hence, 
$$G \setminus Z(G) \subseteq a^G \cdot y^G \subseteq a^G \cdot b^G \cdot c^G.$$
On the other hand, if $z \in Z(G)$, then $zc^{-1}$ is non-central semisimple
and so $zc^{-1} \in a^G \cdot b^G$, whence
$z \in a^G \cdot b^G \cdot c^G$.   
\end{proof}

In what follows, we will choose $r$ to satisfy the condition (i) of Lemma
\ref{23}. Hence, fixing any $x \in G$ of order $r$ and choosing $y$ 
suitably, it suffices to show that $g \in x^G\cdot y^G$, equivalently,
\begin{equation}\label{count}
  \sum_{\chi \in \Irr(G)}\frac{\chi(x)\chi(y)\overline{\chi}(g)}{\chi(1)} 
    > 0
\end{equation}
for all non-central non-semisimple elements $g \in G$.   

\subsection{Type $D_n$ with $2|n \geq 4$}
Throughout this subsection, let $\GC^F = G = Spin^+_{2n}(q)$ with $2|n \geq 4$. 
In this case, 
it is already proved in \cite[Theorem 1.1.4]{LST1} and \cite[Theorem 7.6]{GM}
that $G$ possesses two regular semisimple elements $y_1$, $y_2$ such that
$y_1^G \cdot y_2^G$ contains $G \setminus Z(G)$. But the order of one of these
two elements is {\it not} a prime power; moreover, the pair of maximal tori 
containing these elements does not work well in further applications that we have 
in mind, including Theorems \ref{main2} and \ref{main3}.

Note that Theorem \ref{main1} and Corollaries \ref{simple1}, \ref{simple2} hold for 
$G = \Omega^+_8(2)$ by choosing $x$, $y \in G$ of order $7$ (as one can check
using \cite{GAP}). In what follows we will therefore assume that $(n,q) \neq (4,2)$.
Following the approach of \cite[\S2]{LST1}, we consider some $F$-stable maximal
tori $\TC_1$, $\TC_2$ of $\GC$ such that $T_1 := \TC_1^F$ is of type 
$T^{+,+}_{n-1,1}$ (so it has order $(q^{n-1}-1)(q-1)$) and $T_2 := \TC_2^F$ is of type 
$T^{-,-}_{n-1,1}$ (so it has order $(q^{n-1}+1)(q+1)$).

\begin{lem}\label{tori}
{\sl Suppose $2|n \geq 4$. Then the tori $T_1$ and $T_2$ are weakly 
orthogonal in the sense of \cite[Definition 2.2.1]{LST1}.}
\end{lem}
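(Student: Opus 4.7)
The goal is to verify Definition 2.2.1 of \cite{LST1} for the pair $(T_1, T_2)$. Unwinding that definition, I must check (a) that $T_1$ and $T_2$ are not $G$-conjugate and (b) that for every $g_1, g_2 \in G$, the intersection $(g_1 T_1 g_1^{-1}) \cap (g_2 T_2 g_2^{-1})$ is contained in $Z(G)$; equivalently, no non-central semisimple element of $G^*$ has a centralizer whose Weyl group meets both the $W(D_n)$-class of $T^{+,+}_{n-1,1}$ and the $W(D_n)$-class of $T^{-,-}_{n-1,1}$.

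Step 1 (non-conjugacy). Compare orders: $|T_1| = (q^{n-1}-1)(q-1)$ and $|T_2| = (q^{n-1}+1)(q+1)$. Since $n \geq 4$ these are distinct, so $T_1$ and $T_2$ are not $G$-conjugate.

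Step 2 (coprimality of the common part). Any element lying in (a conjugate of) both tori has order dividing
\[
  \gcd\bigl((q^{n-1}-1)(q-1),\,(q^{n-1}+1)(q+1)\bigr).
\]
Using the elementary identities $\gcd(q^a-1,q^a+1)\mid 2$ and $\gcd(q^a-1,q^b+1)\mid q^{\gcd(a,b)}+1$ (applied with $a = n-1$, $b = 1$, and noting $n$ is even so $n-1$ is odd), this gcd is a divisor of a small power of $2$. In particular, when $q$ is even the intersection is trivial and (b) is automatic. When $q$ is odd, only $2$-elements are relevant.

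Step 3 (centralizers of $2$-elements). Pass to the dual $G^* = SO^+_{2n}(q)$ (or the appropriate adjoint form dual to the simply connected $Spin^+_{2n}$). For a non-central semisimple $s \in G^*$ whose order is a power of $2$, the centralizer $C_{G^*}(s)^\circ$ has root system $\Phi_s \subsetneq \Phi(D_n)$ which, by the standard tables for $D_n$, is a product of classical factors of type $A$ and $D$ of total rank $n$. A torus of type $T^{+,+}_{n-1,1}$ arises (up to $G^*$-conjugacy) only from a Weyl element whose signed cycle type is $[(n-1)^+, 1^+]$; this requires $\Phi_s$ to contain an $A_{n-2}$ factor of full rank $n-1$. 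Similarly, a torus of type $T^{-,-}_{n-1,1}$ requires a twisted $D_{n-1}$ (or equivalent $^2\!A$) factor of rank $n-1$. These rank constraints cannot be satisfied simultaneously by any proper subsystem $\Phi_s$ of $D_n$; hence $\Phi_s = \Phi(D_n)$ and $s \in Z(G^*)$, contradicting the assumption.

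Step 4 (conclusion). Combining Steps 1--3 verifies both clauses of Definition 2.2.1, completing the proof.

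The main obstacle will be Step 3: the enumeration must respect the subtle distinction between $W(B_n)$- and $W(D_n)$-conjugacy of signed cycle types (since the relevant Weyl elements have the same $W(B_n)$-class but may split in $W(D_n)$ when $n$ is even), and it must correctly account for the action of graph automorphisms when $n = 4$. Once the correct combinatorial accounting is in place, the rank incompatibility between the $A_{n-2}$-factor required by $T_1$ and the $^2\!D_{n-1}$-factor required by $T_2$ is decisive and forces $s$ to be central.
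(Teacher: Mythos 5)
The paper's proof is quite different from yours: it works entirely in the conformal orthogonal group $H=CO(V)^\circ$ (the complete preimage of $G^*=PCO(V)^\circ$), writes the spectrum $S$ of a common element $g$ as two decompositions $X\sqcup Y$ and $Z\sqcup T$ dictated by the structure of the two dual tori (with a conformal coefficient $\gamma$), and uses the elementary pigeonhole $|S|=2n<4n-4=|X|+|Z|$ to force $X=Z$ and $Y=T$ to be constant multisets in $\FF_q^\times$, finally extracting a contradiction from the mismatch of Witt types ($+$ vs.\ $-$) when the two eigenvalues differ. Your proposal instead first reduces to $2$-elements via the gcd observation and then tries to rule them out by a root-system/Weyl-group argument. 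The gcd reduction in Step 2 is sound (the gcd is in fact exactly $4$ for $q$ odd), and it is a genuine simplification the paper does not use.

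However, Step 3 is not a proof, and as phrased its logic is not correct. You assert that for $s$ to lie in a conjugate of $T_1^*$, the root system $\Phi_s$ of $C_{G^*}(s)^\circ$ must ``contain an $A_{n-2}$ factor of full rank $n-1$,'' and similarly a twisted $D_{n-1}$ factor for $T_2^*$. This has the implication backwards: if $s$ lies in a maximal torus $T$ of $G^*$, then $T$ is automatically a maximal torus of $C_{G^*}(s)$ regardless of $\Phi_s$ (for $s=1$, $\Phi_s$ is all of $D_n$ and all torus types occur; for $s$ regular, $\Phi_s=\emptyset$ and exactly one type occurs). The correct constraint is that the $F$-conjugacy class in $W(D_n)$ of the twisting element defining each torus must meet the stabilizer of $s$ in $W$ (which contains $W(\Phi_s)$ together with possible outer pieces coming from the component group of $C_{\GCD}(s)$). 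Even granting this repair, the assertion ``These rank constraints cannot be satisfied simultaneously by any proper subsystem'' is exactly the combinatorial verification you would need to carry out, and you do not: you flag it yourself as ``the main obstacle,'' mentioning the $W(B_n)$ vs.\ $W(D_n)$ class-splitting for even $n$ but not resolving it. In particular, for small $2$-elements (order $2$ or $4$) in adjoint type $D_n$ with $n$ even, the centralizers and the component-group contributions are precisely the delicate part, and without that enumeration the argument is incomplete. Also worth noting: the paper's computation is deliberately carried out in $CO(V)^\circ$ with a free conformal coefficient $\gamma$, not in $SO_{2n}^+(q)$, and your Step 3 appears to drop this by passing directly to $SO$/adjoint form.

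Finally, Step 1 (non-conjugacy of $T_1$ and $T_2$ from $|T_1|\ne|T_2|$) is correct but is not part of what one has to check for weak orthogonality in the sense used here, so it adds nothing to the argument.
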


\begin{proof}
We follow the proof of \cite[Proposition 2.6.1]{LST1}.
Here, the dual group $G^{*}$ is $PCO(V)^{\circ}$, where
$V = \FF_{q}^{2n}$ is endowed with a suitable quadratic form $Q$; see
\cite[Lemma 7.4]{TZ1} for an explicit description of the groups $G^{*}$ and
$H := CO(V)^{\circ}$. Consider the
complete inverse images in $H$ of the tori dual to
$T_1$ and $T_2$, and assume $g$ is an element
belonging to both of them. We need to show that $g \in Z(H)$.
To this end, consider the spectrum $S$ of the semisimple element $g$ on $V$ as a
multiset. Let $\gam \in \FF_{q}^{\times}$ be the {\it conformal coefficient} of
$g$, i.e. $Q(g(v)) = \gam Q(v)$ for all $v \in V$. Then $S$ can be represented
as the joins of multisets $X \sqcup Y$ and $Z \sqcup T$, where
$$\begin{array}{ll}
  X := \{x,x^{q}, \ldots,x^{q^{n-2}},\gam x^{-1},\gam x^{-q}, \ldots,
             \gam x^{-q^{n-2}}\},& Y := \{y,\gam y^{-1}\},\\
  Z := \{z,z^{q}, \ldots,z^{q^{n-2}},\gam z^{-1},\gam z^{-q}, \ldots,
             \gam z^{-q^{n-2}}\}, & T := \{t,\gam t^{-1}\},\end{array}$$
for some $x,y,z,t \in \overline{\FF}_{q}^{\times}$; furthermore,
$x^{q^{n-1}-1} = 1 = y^{q-1}$ and $z^{q^{n-1}+1} = \gam = t^{q+1}$.

Since $|S| = 2n < |X|+|Z| = 4n-4$, we may assume that $x \in X \cap Z$. It follows
that $x^2 = \gam$ and so $x^{2(q-1)} = 1$. As $n$ is even, we see that 
$|x|$ divides $\gcd(2(q-1),q^{n-1}-1) = q-1$, i.e. $x \in \FF_q^{\times}$. Thus
$X = Z = \underbrace{\{x,x, \ldots,x\}}_{2n-2}$ (as multisets). In turn, this 
forces $Y \cap T \neq \emptyset$ and so we may assume that $y \in Y \cap T$. 
Arguing as with $x$, we get $y \in \FF_q^{\times}$, $y^2 = \gam$, and 
$Y = T = \{y,y\}$. Recall that 
$x^2 = \gam$. Now if $x = y$, then $S = \underbrace{\{x,x, \ldots,x\}}_{2n}$
and so $g \in Z(H)$ as $g$ is semisimple. 

Assume that $x \neq y$, whence $q$ is odd and $y = -x$. Using the decomposition
$S = X \sqcup Y$, we see that $V$ is the orthogonal sum $V_1 \oplus V_2$,
where $V_1 = \Ker(g-x \cdot 1_V)$ and  $V_2 = \Ker(g+x \cdot 1_V)$. Moreover, since
$T_1$ has type $T^{+,+}_{n-1,1}$, $V_1$ and $V_2$ are both of type $+$. But then
the same argument applied to the decomposition $S = Z \sqcup T$ and the torus $T_2$
implies that $V_1$ and $V_2$ must be both of type $-$, a contradiction.
\end{proof}  

By \cite{Zs}, since $n-1 \geq 3$ is odd, $q^{n-1}-1$ has a 
{\it primitive prime divisor} $r$, i.e. $r$ divides $q^{n-1}-1$ but not
$\prod^{n-2}_{i=1}(q^i-1)$. In what follows, we will let 
$\ppd(q,n)$ denote any such divisor. Similarly, we take 
$s = \ppd(q,2n-2)$ (which exists since we are assuming $(n,q) \neq (4,2)$).
Arguing as in the proof of \cite[Lemma 2.4]{MT}, we can show that any element 
$x \in G$ of order $r$ is regular semisimple, and certainly we can choose
$x \in T_1$. Similarly, we can find a regular semisimple element 
$y \in T_2$ of order $s$. 
In fact, if $(n,q) \neq (4,4)$, then, writing $q = p^f$, we can choose
$r = \ppd(p,(n-1)f)$ and $s = \ppd(p,2(n-1)f)$, which ensures that
$r > (n-1)f$ and $s > 2(n-1)f$.    

With the above choice of $(x,y)$, we prove the following key statement:

\begin{pro}\label{values}
{\sl There exist precisely four irreducible characters of $G$ which are nonzero
on both $x$ and $y$: the principal character $1_G$, the Steinberg character 
$\St$, and two more unipotent characters $\al$ and $\beta$ of degree 
$$\al(1) = \frac{(q^n-1)(q^{n-1}+q)}{q^2-1},~~~
  \beta(1) = \frac{q^{n^2-3n+2}(q^n-1)(q^{n-1}+q)}{q^2-1}.$$
All of them take values $\pm 1$ at $x$ and at $y$.}
\end{pro}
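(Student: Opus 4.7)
The plan is to reduce the statement to a multiplicity computation in the Deligne--Lusztig characters $R^{\GC}_{\TC_1}(1)$ and $R^{\GC}_{\TC_2}(1)$. Since $r = \ppd(q,n-1)$ satisfies $r \equiv 1 \pmod{n-1}$ and is odd (as $n-1 \geq 3$), we have $r > n$, so a standard ppd argument (as in \cite[Lemma~2.4]{MT}) identifies $\TC_1$ as the unique $G$-class of $F$-stable maximal tori whose fixed-point group contains an element of order $r$. By the Deligne--Lusztig character formula at a regular semisimple element, any $\chi$ with $\chi(x) \neq 0$ must be a constituent of $R^{\GC}_{\TC_1}(\theta)$ for some $\theta \in \Irr(\TC_1^F)$, equivalently $\chi$ must lie in a rational Lusztig series $\EC(G, s^*)$ with $(s^*)$ meeting $\TC_1^*$. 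The analogous conclusion holds for $y$ and $\TC_2^*$. Combined with the weak orthogonality of $\TC_1, \TC_2$ provided by Lemma \ref{tori}, the class $(s^*)$ is forced to be central in $\GCD$, so $\chi$ must be a unipotent character of $G = Spin^+_{2n}(q)$.

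I would then identify the unipotent $\chi_u$ with nonzero multiplicity in both $R^{\GC}_{\TC_1}(1)$ and $R^{\GC}_{\TC_2}(1)$, using Lusztig's parametrization of unipotent characters of $D_n$ by symbols together with the formula $\langle R^{\GC}_{\TC_w}(1), \chi_\lambda \rangle = \lambda(w)$ for principal-series $\chi_\lambda \leftrightarrow \lambda \in \Irr(W(D_n))$. Since $\TC_1$ and $\TC_2$ both come from the Levi of type $D_{n-1}\times 1$ with the $D_{n-1}$-part a Coxeter torus (of split versus twisted flavor), only principal-series unipotents contribute and no degenerate-symbol corrections intervene. A direct computation at the two Coxeter-type Weyl class representatives then singles out exactly four $W(D_n)$-irreducibles supported on both classes: the trivial, the sign, the reflection, and the reflection twisted by sign.

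These four $\lambda$ correspond respectively to $1_G$, $\St$, $\alpha$, and $\beta$, with $\beta$ the Alvis--Curtis dual of $\alpha$; Lusztig's generic degree polynomials applied to the associated $D_n$-symbols yield the stated expressions for $\alpha(1)$ and $\beta(1)$ (consistent with $\beta(1)/\alpha(1) = q^{(n-1)(n-2)}$, the expected Alvis--Curtis jump in type $D_n$). Since each of the four characters enters $R^{\GC}_{\TC_i}(1)$ with multiplicity $\pm 1$ and only the parameter $\theta = 1$ contributes at an element of prime order $r$ or $s$ coprime to $|W^F(\TC_i)|$, the resulting values $\chi(x), \chi(y) \in \{\pm 1\}$ drop out. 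The main obstacle I anticipate is the combinatorial identification of $\alpha$ and $\beta$: one must rule out contributions from cuspidal unipotent characters of intermediate Levis and carefully handle the degenerate symbols specific to $D_n$ with $n$ even, which requires bookkeeping with the $W(D_n)$-character values at the two Coxeter-type classes attached to the $T^{+,+}_{n-1,1}$ and $T^{-,-}_{n-1,1}$ tori.
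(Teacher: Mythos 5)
Your reduction to unipotent characters via weak orthogonality and \cite[Proposition 2.2.2]{LST1} is exactly what the paper does, and your route from there — computing multiplicities in $R^{\GC}_{\TC_i}(1)$ via Weyl-group character values — is genuinely different from the paper's hook--cohook combinatorics applied to the symbol of $\chi$. But the step you flag yourself as the main obstacle is indeed a real gap, not a matter of bookkeeping.

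The formula $\langle R^{\GC}_{\TC_w}(1), \chi_\lambda\rangle = \lambda(w)$ is only valid for principal-series unipotent characters, and your assertion that ``only principal-series unipotents contribute and no degenerate-symbol corrections intervene'' is unjustified and, in general, false. For an arbitrary $w$, $R_{T_w}(1)$ decomposes over all unipotent families via Lusztig's Fourier matrices; constituents outside the principal series (cuspidal unipotents of $G$, and unipotents in Harish-Chandra series above cuspidal pairs on proper Levis) absolutely do appear in $R_{T_w}(1)$ for suitable $w$ — indeed Lusztig's theorem on Coxeter tori produces exactly such constituents. Nothing about $T^{+,+}_{n-1,1}$ and $T^{-,-}_{n-1,1}$ being ``Coxeter in $D_{n-1}$ times a rank-1 piece'' rules these out a priori. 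To exclude them you would need either the family-by-family Fourier computation you are avoiding, or a completely different mechanism — which is precisely what the paper's argument supplies: the defect-zero observation in step 3) (if $S$ has no hook of cohook length $n-1$, then $\chi$ has $r$- or $s$-defect $0$, so $\chi(x)\chi(y)=0$) converts the vanishing condition into a combinatorial constraint on the symbol, bypassing the Fourier transform entirely. This is the key idea your proposal is missing.

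Two smaller points. First, $\TC_1$ is not the unique $G$-class of maximal tori containing elements of order $r = \ppd(q,n-1)$: tori of type $T^{+,-}_{n-1,1}$ (order $(q^{n-1}-1)(q+1)$) also contain such elements, and a regular semisimple element of order $r$ may have either as its centralizer. The argument works because $x$ is \emph{chosen} to lie in $T_1$, not because it is forced to. Second, you invoke ``$\beta$ is the Alvis--Curtis dual of $\alpha$'' as if it were automatic; in the paper this is the content of the entire step 7) and requires a nontrivial argument (Curtis's theorem on Harish-Chandra induction and duality, self-adjointness of $D_{\GC}$, and a divisibility contradiction), precisely because the degree relation $\beta(1)/\alpha(1) = q^{(n-1)(n-2)}$ alone does not identify $D_{\GC}(\alpha)$. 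Your plan as written assumes the two most delicate steps of the paper's proof rather than supplying them.
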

  
\begin{proof}
1) Consider any $\chi \in \Irr(G)$ with 
$\chi(x)\chi(y) \neq 0$. Since $T_1$ and $T_2$ are weakly orthogonal by Lemma 
\ref{tori}, $\chi$ must be unipotent by \cite[Proposition 2.2.2]{LST1}. 
Now for $n=4$ the statement follows by inspecting the values of the unipotent 
characters of $G$ as given in {\sf {Chevie}} \cite{Chevie}. From now on
we will assume $n \geq 6$.

To identify $\chi$ among the unipotent characters of $G$, 
one could follow the proof of Propositions 3.3.1 and
7.1.1 of \cite{LST1}, but instead we will use the hook-cohook approach of 
\cite[\S3.3]{LMT}. Let $\chi$ correspond to the \emph{symbol}
$S=(X,Y)$ which is a pair of strictly increasing sequences $X=(x_1<\ldots<x_k)$,
$Y=(y_1<\ldots<y_l)$ of non-negative integers, with $0 \notin X \cap Y$,  
\begin{equation}\label{rk1}
  n = \sum_{i=1}^k x_i+\sum_{j=1}^l y_j
    -\frac{(k+l)(k+l-2)}{4},
\end{equation}
and $4|(k-l)$. (Such a symbol corresponds to two unipotent characters of $G$
if $X = Y$.) A \emph{hook of $S$}
is a pair $(b,c)\in\ZZ^2$ with $0 \leq b<c$ and either $b\notin X$, $c\in X$,
or $b\notin Y$, $c\in Y$. A \emph{cohook of $S$} is a pair $(b,c)\in\ZZ^2$ with 
$0 \leq b<c$ and either $b\notin Y$,
$c\in X$, or $b\notin X$, $c\in Y$. We also set
$$a(S):=\sum_{\{b,c\}\subseteq S}\min\{b,c\}-\sum_{i\ge1}\binom{k+l-2i}{2},$$
where the first sum runs over all 2-element subsets of the multiset $X\cup Y$
of entries of $S$, and $b(S)=\lfloor|S-1|/2\rfloor-|X\cap Y|$ if $X\ne Y$,
respectively $b(S)=0$ else. Then
\begin{equation}\label{deg}
  \chi(1)=q^{a(S)}\frac{|G|_{q'}}{2^{b(S)}\prod_{(b,c)\text{ hook}}(q^{c-b}-1)
            \prod_{(b,c)\text{ cohook}}(q^{c-b}+1)},
\end{equation}
where the products run over hooks, respectively cohooks of $S$, cf. 
\cite[Bem.~3.12 and~6.8]{M2}.

\smallskip
2) First we use (\ref{rk1}) to bound $x_k$ and $y_l$ in terms of $n$. 
Recall that $4|t := k-l$. If $x_1 = 0$, then
$y_1 \geq 1$, $y_j \geq j$, and $x_i \geq i-1$, whence  
\begin{equation}\label{rk2}
  n \geq x_k + \sum^{k-1}_{i=1}(i-1) + \sum^l_{j=1}j -\frac{(k+l)(k+l-2)}{4}
  = x_k + \frac{(t-2)^2}{4}.
\end{equation}
If $x_1 \geq 1$ (including the case $k = 1$), then $x_i \geq i$, 
$y_j \geq j-1$, and so  
\begin{equation}\label{rk3}
  n \geq x_k + \sum^{k-1}_{i=1}i + \sum^l_{j=1}(j-1) -\frac{(k+l)(k+l-2)}{4}
  = x_k + \frac{t^2}{4}.
\end{equation}
In particular, (\ref{rk2}) and (\ref{rk3}) imply that $x_k \leq n$ and 
similarly $y_l \leq n$. Without loss we may also assume that $x_k \geq y_l$.

\smallskip
3) Under our assumptions on $(n,q)$, one can check that, if 
$1 \leq i \leq n$ and $\eps = \pm$, then $r|(q^i - \eps)$ only when 
$(i,\eps) = (n-1,+)$ and $s|(q^j-\eps)$ only when $(j,\eps) = (n-1,-)$. 
By 2), for any hook $(b,c)$ we have 
$1 \leq c-b \leq n$.  Now if $c-b \neq n-1$ for all hooks $(b,c)$, then 
(\ref{deg}) implies that $\chi$ has $r$-defect $0$ and so $\chi(x) = 0$,
a contradiction. So $S$ must admit a hook $(i,n-1+i)$ for some $i = 0,1$.
Similarly, $S$ possesses a cohook $(j,n-1+j)$ for some $j = 0,1$. In particular,
$x_k \geq n-1$.

\smallskip
4) Consider the case $x_k > n-1$, whence $x_k = n$ by 2). If $x_1 = 0$,
then (\ref{rk2}) implies that $t=2$, contradicting the condition $4|t$. 
So $x_1 > 0$, and so (\ref{rk3}) (and its proof) implies that $t = 0$, $k = l$,
$$X = \{1,2, \ldots ,k-1,n\}, ~~Y = \{0,1, \ldots ,k-1\}.$$ 
Now if $k = 1$, then $S = (\{n\},\{0\})$, yielding $\chi = 1_G$. If $k = n$, then 
$$X = \{1,2, \ldots ,n\},~~Y = \{0,1, \ldots ,n-1\},$$
yielding $\chi = \St$. In the remaining cases, $2 \leq k \leq n-1$, and so
$S$ cannot admit any hook of the form $(i,n-1+i)$.

\smallskip
5) Assume now that $x_k = n-1 \geq y_l$; in particular, $(0,n-1)$ is both a hook 
and a cohook for $S$. Suppose first that $x_1 = 0$. In this case, (\ref{rk2}) and 
the condition $4|t$ imply that $t = 0$ or $4$, 
$$X = \{0,1, \ldots ,k-2,n-1\},~~Y = \{1,2, \ldots ,l\}.$$ 
Also, since $0 \in X$ and $(0,n-1)$ is a hook, we have $n-1 \in Y$, implying 
$y_l = l = n-1$. Next, $k \leq n$ and $k-l = t \in \{0,4\}$, so $k = n-1$,
$$X = \{0,1, \ldots ,n-4,n-3,n-1\},~~Y = \{1,2, \ldots ,n-2,n-1\},$$
leading to the character $\beta$ which has the degree listed in the 
proposition, as one can see using (\ref{deg}). 

Suppose now that $x_1 \geq 1$. Since $4|t$, (\ref{rk3}) and its proof 
imply that $k = l$, and one of the following two cases occurs:

\smallskip
(a) $X = \{1,2, \ldots ,k-1,n-1\}$, $Y = \{0,1,\ldots ,k-2,k\}$, with 
$1 \leq k \leq n-1$, or 

\smallskip
(b) $X = \{1,2, \ldots ,k-2,k,n-1\}$, $Y = \{0,1,\ldots ,k-1\}$, with 
$2 \leq k \leq n-2$. 

\smallskip
In the case of (a), if $k = 1$ then $S = (\{n-1\},\{1\})$ and 
$\chi = \al$. On the other hand,
if $2 \leq k \leq n-1$, then $(0,n-1)$ can be a cohook for 
$S$ only when $k = n-1$, leading again to $\chi = \beta$. In the case of (b),
$(0,n-1)$ cannot be a cohook of $S$.

\smallskip
6) We have shown that $\chi \in \{1_G,\al,\beta,\St\}$. It remains to prove
that $\chi(x),\chi(y) \in \{1,-1\}$. The statement is obvious if $\chi = 1_G$ or
$\chi = \St$. Consider the case $\chi = \al$. Note that $\Irr(G)$ contains 
a unique irreducible character of degree $\al(1)$. 
(Indeed, the claim is a consequence of \cite[Theorem 7.6]{TZ1} if $q \geq 4$, and it follows from \cite[Theorem 1.3]{N} if $q < 4$.) On the other hand,
it is well known (see e.g. \cite[Table 1]{ST}) that the rank $3$ permutation 
character $\rho$ of $G$ (acting on the singular $1$-spaces of the natural module 
$V = \FF_q^{2n}$) is the sum of $1_G$, an irreducible character of degree $\al(1)$,
and another one, say $\gam$, of degree $(q^{2n}-q^2)/(q^2-1)$. It follows that
$\rho = 1_G + \al + \gam$. Note that $\gam$ has $r$-defect $0$ and $s$-defect $0$.
Also, it easy to see that $\rho(x) = 2$ and $\rho(y) = 0$. Hence 
$\al(x) = \rho(x)-1 = 1$ and $\al(y) = \rho(y)-1 = -1$.

\smallskip
7) To prove the statement in the case $\chi = \beta$, we use the Alvis-Curtis 
duality functor $D_{\GC}$ which sends any irreducible character of $G$ to an 
irreducible character of $G$ up to a sign, cf. \cite[Corollary 8.15]{DM}. In 
the case of an $F$-stable torus $\TC$, 
$D_{\TC}(\lam) = \lam$ for all $\lam \in \Irr(\TC^F)$, see 
\cite[Definition 8.8]{DM}. Applying this and \cite[Corollary 8.16]{DM} to
$\TC_1 = C_{\GC}(x)$ (so that $T_1 = \TC_1^F$), we now see that there is some 
$\eps_{\GC} = \pm 1$ such that
$$\eps_{\GC}D_{\GC}(\al)(x) = \pm (D_{\TC_1} \circ \Res^G_{T_1})(\al)(x) 
  = \pm \al(x)  = \pm 1,$$
i.e. $D_{\GC}(\al)(x) = \pm 1$. Similarly, $D_{\GC}(\al)(y) = \pm 1$. 
In particular, by the results proved above, there is
some $\eps = \pm 1$ such that $\eps D_{\GC}(\al) \in \{1_G,\al,\beta,\St\}$. 

As shown in 6), $\alpha$ is a constituent of 
$\rho$. Hence it is also a constituent of the permutation character $1^G_B$, where $B$ is a Borel subgroup of $G$, and the same is true for $1_G$ and $\St$. For each 
irreducible constituent $\varphi$ of $1^G_B$, there is a polynomial 
$d_\varphi(X) \in \QQ[X]$ in variable $X$ (the so-called {\it generic degree}, cf.
\cite[\S13.5]{C}, which depends only on the Weyl group of $\GC$ but not on $q$) such that $\varphi(1) = d_\varphi(q)$. 
According to Theorem  (1.7) and Proposition (1.6) of \cite{Cur}, $D_\GC$ permutes 
the irreducible constituents of $1^G_B$. Moreover, there is an integer $N$ such that
\begin{equation}\label{d-power}
  d_{D_\GC(\varphi)}(X) = X^N d_\varphi(X^{-1}).
\end{equation}  
It is well known, see e.g. Corollary 8.14 and
Definition 9.1 of \cite{DM}, that $D_{\GC}$ interchanges $1_G$ and $\St$. Since
$\St(1) = q^{n(n-1)/2}$, (\ref{d-power}) applied to $\varphi = 1_G$ yields that 
$N = n(n-1)$. Applying (\ref{d-power}) to $\varphi = \al$, we now obtain that 
$D_\GC(\al)(1) = \beta(1) \neq \al(1)$. It follows that $D_\GC(\al) = \beta$.
Thus we have shown that $\beta(x), \beta(y) \in \{\pm 1\}$.
\end{proof}

Now we can complete the case $n=4$:

\begin{lem}\label{n=4}
{\sl If $n = 4$, then $x^G\cdot y^G = G \setminus Z(G)$.}
\end{lem}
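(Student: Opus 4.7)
The plan is to apply the Frobenius structure-constant formula together with Proposition \ref{values}. For any $g \in G$ we have
\begin{equation*}
N_g \;:=\; \frac{|G|\cdot|\{(a,b) \in x^G \times y^G : ab = g\}|}{|x^G|\cdot|y^G|} \;=\; \sum_{\chi \in \Irr(G)} \frac{\chi(x)\chi(y)\overline{\chi(g)}}{\chi(1)},
\end{equation*}
and by Proposition \ref{values} (which is valid for $n=4$ via the {\sf Chevie} check already invoked in its proof) only the four characters $1_G$, $\St$, $\alpha$, $\beta$ contribute to the sum, each with $\chi(x), \chi(y) \in \{\pm 1\}$. Hence $N_g = 1 \pm \St(g)/\St(1) \pm \alpha(g)/\alpha(1) \pm \beta(g)/\beta(1)$.

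For the forward inclusion $x^G \cdot y^G \supseteq G \setminus Z(G)$ I would split on $g$. If $g$ is semisimple and non-central, the Gow-type result noted at the start of this section gives $g \in x^G \cdot y^G$ immediately. If $g$ is non-semisimple then $\St(g) = 0$, so it suffices to establish
\begin{equation*}
\frac{|\alpha(g)|}{\alpha(1)} \;+\; \frac{|\beta(g)|}{\beta(1)} \;<\; 1. \qquad (\star)
\end{equation*}
With $\alpha(1) = q(q^2+1)^2$ and $\beta(1) = q^7(q^2+1)^2$ for $n=4$, the $p'$-parts of both degrees equal $(q^2+1)^2$, so the ratios are of order $1/q$ and $1/q^7$ on classes with a non-trivial unipotent part. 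The cleanest verification is to tabulate $\alpha$ and $\beta$ on every non-semisimple class of $Spin^+_8(q)$ in {\sf Chevie}; the inequality then holds with ample margin.

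For the reverse inclusion $x^G \cdot y^G \cap Z(G) = \emptyset$ I would use an order argument. The primes $r = \ppd(q,3)$ and $s = \ppd(q,6)$ are distinct (indeed $\gcd(q^3-1,q^3+1) \mid 2$ while $r, s \geq 7$ for all admissible $q$) and both coprime to $|Z(G)|$, which divides $4$. If $z \in Z(G) \cap x^G \cdot y^G$, then writing $z = x^g \cdot y^h$ yields $y^h = (x^g)^{-1} z$; since $z$ is central and so commutes with $x^g$, this element has order $\mathrm{lcm}(r,|z|)$, a multiple of the prime $r$ that cannot equal the distinct prime $s$, a contradiction.

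The main obstacle is the class-by-class verification of $(\star)$ for mixed classes $g = su$ with $s$ non-central semisimple and $u$ a non-trivial unipotent element of $C_{\GC}(s)^F$. There are many such classes in $Spin^+_8(q)$, but the Deligne--Lusztig character formula expresses $\alpha(su)$ and $\beta(su)$ in terms of Green functions on the smaller group $C_{\GC}(s)^F$, which together with the large gap between $\chi(1)$ and $\chi(1)_{p'}$ for $\chi \in \{\alpha,\beta\}$ renders the required inequality routine; {\sf Chevie} supplies the explicit values and closes the case.
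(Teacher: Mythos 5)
Your proposal follows essentially the same route as the paper: apply the Frobenius structure-constant formula, invoke Proposition \ref{values} to reduce to the four characters $1_G$, $\St$, $\alpha$, $\beta$, note $\St(g)=0$ on non-semisimple $g$ and handle semisimple non-central $g$ via the Gow-type result, then verify $(\star)$ by inspecting {\sf Chevie} (the paper records the explicit bound $\frac{2q^3+q}{q(q^2+1)^2} + \frac{1}{(q^2+1)^2} < 1/2$). The only difference is that you spell out the reverse inclusion $x^G\cdot y^G \cap Z(G)=\emptyset$ via an order argument using the distinct primes $r=\ppd(q,3)$, $s=\ppd(q,6)$ coprime to $|Z(G)|\mid 4$; the paper leaves this implicit (saying only ``it suffices to prove (\ref{count})''), so your addition is a harmless and correct clarification.
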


\begin{proof}
It suffices to prove (\ref{count}) for every non-semisimple 
$g \in G \setminus Z(G)$. For such a $g$, $\St(g) = 0$. Furthermore,
inspecting the character values of $\al$ and $\beta$ as given in 
{\sf {Chevie}} \cite{Chevie}, we see that 
$$\left|\frac{\al(g)}{\al(1)}\right| + \left|\frac{\beta(g)}{\beta(1)}\right|
  \leq \frac{2q^3+q}{q(q^2+1)^2} + \frac{q^7}{q^7(q^2+1)^2} < 1/2,$$
and so we are done by Proposition \ref{values}.
\end{proof}

Next we estimate the character ratios $|\al(g)/\al(1)|$ for the character $\al$ 
described in Proposition \ref{values}.
  
\begin{lem}\label{bound1}
{\sl Assume $q$ is odd and $n \geq 6$. Then $|\al(g)/\al(1)| < 0.4$ for 
all $g \in G \setminus Z(G)$.}
\end{lem}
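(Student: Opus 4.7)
The starting point is the rank $3$ decomposition $\rho = 1_G + \al + \gam$ established in part 7 of the proof of Proposition~\ref{values}, where $\rho$ is the permutation character of $G$ on the set $\Omega$ of singular $1$-spaces of the natural module $V = \FF_q^{2n}$. This gives the pointwise identity
$$\al(g) \;=\; \rho(g) - 1 - \gam(g),$$
so the plan is to control $\rho(g)$ and $\gam(g)$ separately and then divide by $\al(1) = q(q^n-1)(q^{n-2}+1)/(q^2-1)$, which is of order $q^{2n-2}$.

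First I would bound $\rho(g)$, i.e.\ count singular $1$-spaces fixed by $g$. Any such line is spanned by an eigenvector of $g$ on $V$, so $\rho(g) = \sum_\lam |\Omega \cap V_\lam|$, where eigenspaces $V_\lam$ with $\lam \ne \pm 1$ are totally singular and paired with $V_{\lam^{-1}}$ under $Q$, while $V_{\pm 1}$ inherit non-degenerate quadratic forms of explicit type. Since $g \in G \setminus Z(G)$ is not a scalar on $V$, no eigenspace is all of $V$; maximising over conjugacy classes shows that $\rho(g)$ is largest for reflection-like involutions (possible since $q$ is odd), and is bounded by the number of singular $1$-spaces in a non-degenerate hyperplane, namely $(q^{2n-2}-1)/(q-1)$, with all other classes giving $O(q^{2n-3})$.

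Second, I would control $\gam(g)$ using the rank $3$ association scheme structure. Let $\OO_0, \OO_1, \OO_2$ be the $G$-orbitals on $\Omega \times \Omega$ (diagonal, perpendicular distinct pairs, non-perpendicular pairs) and $a_i(g) = |\OO_i^g|$; then $a_0(g) + a_1(g) + a_2(g) = \rho(g)^2$, and the rank $3$ eigenvalue matrix for the characters $\{1_G, \al, \gam\}$ expresses $\gam(g)$ as an explicit $\QQ$-linear combination of $a_0(g), a_1(g), a_2(g)$. Each $a_i(g)$ is bounded geometrically by counting pairs of fixed eigenvectors in the corresponding orthogonality relation, giving $|\gam(g)| = O(q^{2n-3})$. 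Combined with the bound for $\rho(g)$, this yields $|\al(g)/\al(1)| = O(q^{-1}) < 0.4$ once $n \ge 6$.

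The main obstacle is the near-maximal case where $g$ is close to a reflection (involutions with a large $(+1)$- or $(-1)$-eigenspace), because both $\rho(g)$ and $\gam(g)$ are then of the same leading order $q^{2n-3}$, and one must track the cancellation in $\rho(g) - 1 - \gam(g)$ carefully rather than apply the triangle inequality naively. The tightest case to verify is the boundary $q = 3$, $n = 6$, where the $q^{-1}$ slack is smallest; here I would either refine the rank $3$ eigenvalue computation for the explicit worst-case involution classes, or appeal to \textsf{Chevie} to check the handful of non-semisimple classes directly.
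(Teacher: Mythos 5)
Your proposal takes a genuinely different route from the paper for the odd-characteristic case: you work directly with the rank~$3$ decomposition $\rho = 1_G + \al + \gam$ and try to extract $\al(g) = \rho(g) - 1 - \gam(g)$ by controlling both terms. That is in fact exactly the strategy the paper uses for the \emph{even}-characteristic companion statement (Lemma~\ref{bound2}); for odd $q$, however, the paper switches to the dual pair realization $\al = D^\circ_{1_S}$ from \cite[\S5]{LBST1}, writing $\al(g) = \frac{1}{|S|}\sum_{x \in S}\om(xh) - 1$ for $S = Sp_2(q)$ and a reducible Weil character $\om$ of $Sp_{4n}(q)$, and then bounds $|\om(xh)|$ termwise according to the type of $x$. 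The advantage of the Weil-character route is that it yields a direct upper bound on $|\al(g)|$ with no cancellation to track.

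The difficulty you flag in your last paragraph is real and is, in effect, the gap in your proposal. For $q$ odd, the dangerous classes are \emph{semisimple} involutions with $(\pm 1)$-eigenspaces of dimension $2n-2$ and $2$: there $\rho(g)$ is of order $q^{2n-3}$, which is already of the same order as $\al(1)$, so the triangle inequality is useless and you must compute $\gam(g)$ exactly. The paper's tool for that, formula~(\ref{rank3}), is stated and applied only for even $q$ (and is attributed to the forthcoming \cite{GMT}); you would need to establish the odd-characteristic analogue, or redo the orbital-eigenvalue computation from scratch, for the lemma to be proved rather than asserted. Also note a small misdirection: in odd characteristic the near-maximal cases are semisimple, not non-semisimple, so a \textsf{Chevie} check ``of the handful of non-semisimple classes'' would miss precisely the cases that matter; and since $n$ and $q$ range over infinitely many values, \textsf{Chevie} cannot substitute for a uniform estimate in any case. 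Your outline is salvageable — the cancellation does work out once $\gam(g)$ is computed explicitly for the involution classes — but as written the crucial step is not carried out, whereas the paper's Weil-character argument closes it immediately.
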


\begin{proof}
As mentioned in p. 6) of the proof of Proposition \ref{values}, $\al$ is 
the unique irreducible character of $G$ of degree $\al(1)$. Hence we may 
assume that $\al$ is the character $D^\circ_{1_S} = D_{1_S}-1$ of 
$\bG = \Omega^+_{2n}(q)$ constructed in \cite[\S5]{LBST1} using 
the dual pair $\bG * S$ inside $Sp_{4n}(q)$, with $S := Sp_2(q)$. In particular,
$$\al(g) = \frac{1}{|S|}\sum_{x \in S}\om(xh)-1$$
if $h \in \bG$ corresponds to $g$, and $\om$ is a {\it reducible Weil character}
of degree $q^{2n}$ of $Sp_{4n}(q)$. Denote
$$m(h) = \max_{\lam \in \FF_{q^2}}\dim \Ker(h-\lam \cdot 1_V),$$
where $V = \overline{\FF}_q^{2n}$. Since $g \notin Z(G)$, $m(h) \leq 2n-1$. 

First assume that $m(h) \leq 2n-4$. Arguing as in the proof of 
\cite[Proposition 5.11]{LBST1}, one sees that $|\al(g)| \leq q^{2n-4}+1$.
On the other hand, $\al(g) > q^{2n-3}$. Hence 
$$\left|\frac{\al(g)}{\al(1)}\right| < \frac{q^{2n-4}+1}{q^{2n-3}} < 0.4$$
as $n \geq 6$ and $q \geq 3$. 

Next suppose that $m(h) \geq 2n-3$. Then $h$ has an 
eigenvalue $\lam_0 = \pm 1$ such that 
$\dim \Ker(h-\lam_0 \cdot 1_V) = m(h)$, and 
$\dim \Ker(h-\lam \cdot 1_V) \leq 3$ for all 
$\lam \in \FF_{q^2} \setminus \{\lam_0\}$. Using \cite[Lemma 5.9]{LBST1} and
arguing as in the proof of \cite[Proposition 5.11]{LBST1}, we see 
that 
$$|\om(xh)| \leq \left\{ \begin{array}{ll}
  q^{2n-1},& x = \begin{pmatrix}\lam_0 & 0 \\0 & \lam_0\end{pmatrix},\\
  q^n, & x \mbox{ is $GL_2(q)$-conjugate to }
         \begin{pmatrix}\lam_0 & 1 \\0 & \lam_0\end{pmatrix},\\
  q^3, & \mbox{otherwise}
  \end{array} \right.$$  
for all $x \in S$. It follows that
$$|\al(g)| \leq 1+ \frac{q^{2n-1}+(q^2-1)q^n + (q(q^2-1)-q^2)q^3}{q(q^2-1)}
  < \frac{q^{2n-2} +q^{n+1}+q^5}{q^2-1}$$
and so
$$\left|\frac{\al(g)}{\al(1)}\right| < 
  \frac{q^{2n-2}+q^{n+1}+q^5}{(q^{n}-1)(q^{n-1}+q)} < 0.4$$
as well.
\end{proof}

\begin{lem}\label{bound2}
{\sl Assume $q$ is even and $n \geq 6$. Then $|\al(g)/\al(1)| < 0.4$ for 
all $g \in G \setminus \{1\}$.}
\end{lem}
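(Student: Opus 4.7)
The plan is to adapt the strategy of Lemma \ref{bound1} to even characteristic, where the complex Weil representation of $Sp_{4n}(q)$ exploited there is unavailable. The key input remains that, by \cite[Theorem 7.6]{TZ1} for $q \geq 4$ and \cite[Theorem 1.3]{N} for $q = 2$, $\al$ is the unique irreducible character of $G$ of degree $\al(1)$, so I am free to use any realization of an irreducible character of that degree.

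I would first realize $\al$ as a constituent of the rank $3$ permutation character $\rho$ of $G$ on singular $1$-spaces of the natural module $V = \FF_q^{2n}$; by part 6) of the proof of Proposition \ref{values}, $\rho = 1_G + \al + \gam$ with $\gam(1) = (q^{2n}-q^2)/(q^2-1)$. The value $\rho(g)$ is simply the number of singular $1$-spaces of $V$ fixed by $g$, and can be estimated directly in terms of the eigenspace decomposition of $g$ over $\overline{\FF}_q$. To extract $\al(g) = \rho(g) - 1 - \gam(g)$ I need a companion bound on $\gam(g)$, which I would obtain either from the Harish-Chandra description $\rho = R^\GC_\LC(1_L)$ appearing in part 7) of the same proof, combined with the embedding $G \leq Sp_{2n}(q)$ available for even $q$, or from an analogue of the Weil-character estimates of \cite{LBST1} adapted to the orthogonal group in even characteristic.

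Following the case division of Lemma \ref{bound1}, I would set $m(g) := \max_{\lam \in \FF_{q^2}} \dim \Ker(g - \lam \cdot 1_V)$, so that $m(g) \leq 2n-1$ since $g \neq 1$. For $m(g) \leq 2n-4$ a crude uniform bound should yield $|\al(g)| \leq q^{2n-4} + O(q^{2n-5})$, from which $|\al(g)/\al(1)| < 0.4$ follows easily for $n \geq 6$. For $m(g) \in \{2n-3,2n-2,2n-1\}$ I would split by the Jordan type of $g$ on its largest eigenspace and bound $|\al(g)|$ by counting singular $1$-spaces inside that eigenspace and its perpendicular complement, in the spirit of \cite[Proposition 5.11]{LBST1}. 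The main obstacle is the absence of a convenient even-characteristic Weil character for the ambient $Sp_{4n}(q)$, which forces this more permutation-theoretic workaround; the tightest case is $m(g) = 2n-1$ (transvections and near-transvections in $\Omega^+_{2n}(q)$), where the $0.4$ threshold is narrowest and the estimate must also remain uniform down to $q = 2$.
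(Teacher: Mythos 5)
Your high-level skeleton matches the paper's: both use the rank $3$ permutation character $\rho = 1_G + \al + \gam$ on singular $1$-spaces and split on $m(g) = \max_\lam \dim\Ker(g - \lam\cdot 1_V)$. But there are two substantive gaps.

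First, the technical engine of the paper's proof is the explicit closed formula for $\gam(g)$ (equation (\ref{rank3}), quoted from the companion paper \cite{GMT}), expressed as a signed sum of $q^{\dim\Ker(g-\mu\cdot 1_{\tV})}$ over eigenvalues $\mu$ in $\FF_q^\times$ and $\FF_{q^2}^\times$. That formula yields the uniform bound $|\al(g)| \leq q^{m(g)}$, which disposes of $m(g) \leq 2n-4$ immediately, and also gives the exact value of $\gam(g)$ in the remaining cases so that $\al(g) = \rho(g) - 1 - \gam(g)$ can be computed precisely. Your proposed substitutes — the Harish-Chandra description $\rho = R^\GC_\LC(1_L)$, or a vaguely-described even-characteristic Weil analogue — are not developed to the point of yielding pointwise control on $\gam(g)$, and neither is obviously strong enough; the Harish-Chandra picture controls decomposition numbers and degrees, not individual character values. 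Without a concrete input here the argument has no teeth.

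Second, and more seriously, your claim that the tightest case is $m(g) = 2n-1$ (transvections and near-transvections) is wrong in even characteristic: for $g \in \Omega^+_{2n}(q)$ with $q$ even, $\dim\Ker(g - 1_V)$ is always \emph{even} (this is essentially the Dickson-invariant condition, cf. \cite[p.~xii]{Atlas}). Combined with the observation that for $m(g) \geq 2n-3 > n$ the maximizing eigenvalue must be self-paired, hence equal to $1$ in characteristic $2$, one concludes $m(g) = 2n-2$ is the \emph{only} possibility once $m(g) \geq 2n-3$. The case $m(g) = 2n-1$ simply does not arise; there are no transvections in $\Omega^+_{2n}(q)$ for $q$ even. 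This parity constraint is precisely what makes the even-characteristic analysis tractable: the paper reduces to a short list of elements with $\dim\Ker(g-1_V) = 2n-2$ (a semisimple rank-one deformation of the identity, or $g = 2J_2 \oplus I_{2n-4}$ in one of two $\Omega$-classes distinguished by \cite[Theorem 2.5]{FST}) and computes $\rho(g)$ and $\gam(g)$ explicitly in each. By organizing your case analysis around a case that cannot occur, you would be proving estimates for nonexistent elements while missing the structure that actually drives the proof.
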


\begin{proof}
Again consider the rank $3$ permutation character $\rho = 1_G +\al +\gam$ of 
$G = \Omega^+_{2n}(q)$. Our proof relies on the following key formula proved 
in \cite{GMT}:
\begin{equation}\label{rank3}
 \gam(g) = \frac{1}{2}\left( 
    \frac{1}{q-1}\sum^{q-2}_{i=0}q^{\dim\Ker(g-\delta^i \cdot 1_{\tV})} - 
    \frac{1}{q+1}\sum^{q}_{j=0}(-q)^{\dim\Ker(g-\xi^j \cdot 1_{\tV})}\right)-1
\end{equation}
for some $\delta \in \FF_q^{\times}$ of order $q-1$ and some 
$\xi \in \FF_{q^2}^{\times}$ of order $q+1$. Here, $V = \FF_q^{2n}$
is the natural module for $G$ and 
$\tV = V \otimes_{\FF_q}\overline{\FF}_q$. As before, we define
$$m(g) = \max_{\lam \in \FF_{q^2}}\dim \Ker(g-\lam \cdot 1_{\tV}).$$
Using (\ref{rank3}), it was shown in \cite{GMT} that 
$|\al(g)| \leq q^{m(g)}$. In particular, if $m(g) \leq 2n-4$, then
$$\left|\frac{\al(g)}{\al(1)}\right| \leq 
  \frac{q^{2n-4}(q^2-1)}{(q^n-1)(q^{n-1}+q)} < 0.4$$
since $n \geq 6$. 

\smallskip
Thus we may assume that $m(g) \geq 2n-3$; in particular, 
$\dim \Ker(g-1_V) = m(g)$. But $\dim \Ker(g-1_V)$ is even since 
$g \in \Omega^+_{2n}(q)$, cf. \cite[p. xii]{Atlas}. Also, $m(g) < 2n$ as
$g \neq 1$. It follows that $\dim \Ker(g-1_V) = 2n-2$. Now we can have 
the following two possibilities.

\smallskip
(a) The multiplicity of $1$ as an eigenvalue of $g$ (acting on $V$) is 
$2n-2$. In this case, $g = I_{2n-2} \oplus h$ for some semisimple
element $1 \neq h \in Sp_2(q)$. Thus $h$ is conjugate (over $\overline{\FF}_q$)
to $\diag(\lam,\lam^{-1})$ for some $\lam \in \overline{\FF}_q^{\times}$ 
with $1 \neq \lam^q \in \{\lam,\lam^{-1}\}$. 

(a1) Suppose that $\lam^q = \lam$. Then one can check that 
$$\rho(g) = 2+\frac{(q^{n-1}-1)(q^{n-2}+1)}{q-1}$$
(since $\Ker(g-1_V)$ is a non-degenerate subspace of dimension $2n-2$ of 
type $+$). On the other hand, (\ref{rank3}) yields 
$\gam(g) = (q^{2n-2}-1)/(q^2-1)$. It follows that
$$\al(g) = 1+\frac{(q^{n-1}-1)(q^{n-2}+q)}{q^2-1}.$$

(a2) Assume now that $\lam^q = \lam^{-1}$. Then 
one can check that 
$$\rho(g) = \frac{(q^{n-1}+1)(q^{n-2}-1)}{q-1}$$
(since $\Ker(g-1_V)$ is a non-degenerate subspace of dimension $2n-2$ of 
type $-$). On the other hand, (\ref{rank3}) again yields that 
$\gam(g) = (q^{2n-2}-1)/(q^2-1)$. It follows that
$$\al(g) = -1+\frac{(q^{n-1}+1)(q^{n-2}-q)}{q^2-1}.$$
In both of these subcases, $|\al(g)/\al(1)| < 0.4$ as $n \geq 6$.

\smallskip
(b) The multiplicity of $1$ as an eigenvalue of $g$ (acting on $V$) is 
$\geq 2n-1$. Since this multiplicity is even, it must equal $2n$, i.e. 
$g$ is unipotent. As $\dim \Ker(g-1_V) = 2n-2$, we see that 
$g = 2J_2 \oplus I_{2n-4}$, where $J_2$ denotes a Jordan block of size
$2$ with eigenvalue $1$, and furthermore $g$ acts trivially on 
a non-degenerate $(2n-4)$-dimensional subspace $U$ of $V$. 
By (\ref{rank3}) we have $\gam(g) = (q^{2n-2}-q^2)/(q^2-1)$. Let 
$Q$ denote the $G$-invariant quadratic form on $V$. We can now 
distinguish two subcases.

(b1) $U^\perp$ is decomposable as a sum of proper nonzero non-degenerate 
$g$-invariant subspaces. By \cite[Theorem 2.5]{FST}, there is a 
unique $G$-conjugacy class of elements with this property. So without
loss we may assume that $U$ has type $+$ and 
there is a symplectic basis $(e_1,f_1,e_2,f_2)$ of $U^\perp$ such that
$$g~:~e_1 \mapsto e_1,~~e_2 \mapsto e_2,~~f_1 \mapsto e_1+f_1,
  ~~f_2 \mapsto e_2+f_2,$$
and 
$$Q(e_1) = Q(e_2) = 1,~~Q(f_1) = Q(f_2) = 0.$$
Then $\Ker(g-1_V) = \langle e_1,e_2 \rangle_{\FF_q} \oplus U$. Observe that 
$\langle e_1,e_2 \rangle_{\FF_q}$ contains exactly $q$ singular vectors and
$q$ vectors $v$ with $Q(v) = 1$. Next, $U$ contains exactly 
$(q^{n-2}-1)(q^{n-3}+1)$ nonzero singular vectors and
$(q^{n-2}-1)q^{n-3}$ vectors $u$ with $Q(u) = 1$. It now follows by direct count
that the number of $g$-fixed singular $1$-spaces in $V$ is 
$$\rho(g) = 1+q \cdot (q^{n-2}-1)q^{n-3} + 
  q \cdot \frac{(q^{n-2}-1)(q^{n-3}+1)}{q-1} = 
  \frac{q^{2n-3}-1}{q-1}.$$    

(b2) $U^\perp$ is indecomposable as a sum of proper nonzero non-degenerate 
$g$-invariant subspaces. By \cite[Theorem 2.5]{FST}, there is a 
unique $G$-conjugacy class of elements with this property. So without
loss we may assume that $U$ has type $+$ and 
there is a symplectic basis $(e_1,f_1,e_2,f_2)$ of $U^\perp$ such that
$$g~:~e_1 \mapsto e_1,~~e_2 \mapsto e_1+e_2,~~f_1 \mapsto f_1+f_2,
  ~~f_2 \mapsto f_2$$
(so that $g|_{U^\perp}$ is a short-root element of $Sp(U^{\perp})$), and 
$$Q(e_1) = Q(e_2) = Q(f_1) = Q(f_2) = 0.$$ 
Then $\Ker(g-1_V) = \langle e_1,f_2 \rangle_{\FF_q} \oplus U$. Now 
$\langle e_1,f_2 \rangle_{\FF_q}$ is totally singular, and, as before,
$U$ contains exactly $(q^{n-2}-1)(q^{n-3}+1)$ nonzero singular vectors. 
It now follows by direct count that the number of $g$-fixed singular 
$1$-spaces in $V$ is 
$$\rho(g) = (q+1)+ q^2 \cdot \frac{(q^{n-2}-1)(q^{n-3}+1)}{q-1} = 
  \frac{q^{2n-3}+q^n-q^{n-1}-1}{q-1}.$$   
In both of these subcases,
$$|\al(g)| = |\rho(g)-1-\gam(g)| \leq \frac{q^{2n-3}+q^{n+1}-q^{n-1}-q}{q^2-1}.$$
It follows that $|\al(g)/\al(1)| < 0.4$ as well. 
\end{proof}

The main result of this subsection is the following 

\begin{thm}\label{D-type}
{\sl Let $G = Spin^+_{2n}(q)$ with $2|n \geq 4$, $(n,q) \neq (4,2)$, and let 
$x_1 \in T_1$ and $x_2 \in T_2$ be regular semisimple elements, 
where the tori $T_1$ and $T_2$ are described at the beginning of \S2.1. Then
$x_1^G\cdot x_2^G = G \setminus Z(G)$. In particular,
$x^G\cdot y^G = G \setminus Z(G)$, where $x$ and $y$ are regular semisimple of 
order $r$ and $s$.}
\end{thm}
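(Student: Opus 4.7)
The strategy is to invoke the character-sum criterion~\eqref{count}: it suffices to show that for every non-central $g \in G$,
$$\sum_{\chi \in \Irr(G)}\frac{\chi(x_1)\chi(x_2)\overline{\chi}(g)}{\chi(1)} > 0.$$
The semisimple case comes for free. Since $x_1$ and $x_2$ are regular semisimple, the Gow-type theorem recalled at the start of \S2 (cf.~\cite[Lem.~5.1]{GT}) already places every non-central semisimple element of $G$ into $x_1^G \cdot x_2^G$. Hence only non-central \emph{non-semisimple} $g$ need attention, and for such $g$ one has $\St(g)=0$.

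The next step is to collapse the character sum using the weak orthogonality of $T_1$ and $T_2$ established in Lemma~\ref{tori}. By \cite[Prop.~2.2.2]{LST1} only unipotent characters $\chi$ can satisfy $\chi(x_1)\chi(x_2)\ne 0$. I would argue that the hook/cohook classification carried out in Proposition~\ref{values} applies verbatim to \emph{any} regular semisimple $x_1\in T_1$ and $x_2\in T_2$, not merely to the specific $r$- and $s$-elements: the defect-theoretic condition used there (that the symbol admits a hook, respectively cohook, of length $n-1$) is exactly the condition that $\chi$ has nonzero multiplicity in $R^{\GC}_{\TC_i}(1)$, and this is what governs the vanishing of $\chi$ on regular semisimple elements of $T_i$. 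Hence only $1_G, \al, \beta, \St$ survive in the sum, each taking value $\pm 1$ on $x_1$ and on $x_2$. Combining this with $\St(g)=0$, the desired inequality reduces to
$$\left|\frac{\al(g)}{\al(1)}\right| + \left|\frac{\beta(g)}{\beta(1)}\right| < 1$$
for every non-central non-semisimple $g\in G$.

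The case $n=4$ is then dispatched directly by Lemma~\ref{n=4}. For $n\ge 6$ the $\al$-ratio is bounded by $0.4$ via Lemmas~\ref{bound1} and~\ref{bound2}, according to the parity of $q$. The main obstacle---and the piece I expect to be most delicate---is the matching bound for $\beta$. My plan is to exploit the Alvis-Curtis identity $\beta = \pm D_{\GC}(\al)$ that fell out of step~7) of the proof of Proposition~\ref{values}, together with the enormous gap $\al(1)\sim q^{2n-2}$ versus $\beta(1)\sim q^{n^2-n}$: expanding $D_{\GC}(\al)(g)$ by Curtis's formula as an alternating sum of Harish-Chandra restrictions along $F$-stable parabolics and bounding each term crudely by the order of the relevant centralizer of the non-semisimple $g$, one should comfortably obtain $|\beta(g)/\beta(1)|<0.6$ with ample slack for all $n\ge 6$, completing the required inequality and hence the theorem.
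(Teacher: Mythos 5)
The overall shape of your argument matches the paper's (Gow for the semisimple case, weak orthogonality to restrict to unipotent characters, Lemma~\ref{n=4} for $n=4$, Lemmas~\ref{bound1} and~\ref{bound2} for the $\al$-ratio), but the crucial bound on $|\beta(g)/\beta(1)|$ is where your proposal has a genuine gap, and there is also a more minor imprecision in the reduction step.

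On the reduction from arbitrary $(x_1,x_2)$ to $(x,y)$: you claim the hook/cohook classification from Proposition~\ref{values} applies ``verbatim,'' but step~3 of that proof crucially uses that $x$ is an $r$-element and $y$ an $s$-element, with $r,s$ primitive prime divisors, to invoke $r$-defect and $s$-defect~$0$ arguments; this does not transfer verbatim to an arbitrary regular semisimple $x_1\in T_1$. Your alternative claim that ``nonvanishing at regular semisimple elements of $T_i$ is governed by the multiplicity in $R^\GC_{\TC_i}(1)$'' is in the right spirit, but you should justify it. The paper takes the cleaner route: once one knows (via weak orthogonality and \cite[Prop.~2.2.2]{LST1}) that only unipotent $\chi$ survive, it invokes \cite{DL} to conclude that the value of a unipotent character at a regular semisimple element depends only on the type of the torus containing it, hence $\chi(x_1)=\chi(x)$ and $\chi(x_2)=\chi(y)$, so the two character sums literally coincide term by term.

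The serious issue is the bound on $\beta$. Expanding $D_\GC(\al)(g)$ via Curtis's formula as an alternating sum over $F$-stable parabolics and ``bounding each term crudely'' is not a viable route: there are exponentially many parabolics, the Harish-Chandra restrictions ${}^*R^\GC_\LC(\al)$ are not easy to control as class functions, and no naive centralizer estimate on the individual summands will survive the alternating sum. No Alvis-Curtis machinery is needed here at all. The paper's bound is elementary: the second orthogonality relation gives $|\beta(g)|^2 \le |C_G(g)|$ for every $g$, and for non-central non-semisimple $g$ one has $|C_G(g)| \le |G|/q^{2n-2} < q^{2n^2-3n+2}$, while $\beta(1) > q^{n^2-n-1}$; hence
\[
\left|\frac{\beta(g)}{\beta(1)}\right| < q^{2-n/2} \le q^{-1} \le \tfrac{1}{2}
\]
for $n\ge 6$. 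Combined with $|\al(g)/\al(1)|<0.4$ and $\St(g)=0$, the character sum exceeds $1-0.9>0$, which finishes the proof. You should replace your proposed duality argument with this column-orthogonality estimate.
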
 

\begin{proof}
Note that it suffices to prove the statement for $x$ and $y$. Indeed,
the tori $T_1$ and $T_2$ are weakly orthogonal by Lemma \ref{tori}. Hence, by 
\cite[Proposition 2.2.2]{LST1}, all irreducible characters $\chi$ of $G$ 
that vanish neither on a regular semisimple element $x_1 \in T_1$ nor on 
a regular semisimple element $x_2 \in T_2$ must be unipotent. But then the 
results of \cite{DL} imply that $\chi(x_1)$ does not depend on
the particular choice of $x_1 \in T_1$ of given type, and similarly for 
$\chi(x_2)$; in particular, $\chi(x_1) = \chi(x)$ and $\chi(x_2) = \chi(y)$.
Hence, for any $g \in G$ we have
$$\sum_{\chi \in \Irr(G)}\frac{\chi(x_1)\chi(x_2)\overline{\chi}(g)}{\chi(1)} = 
  \sum_{\chi \in \Irr(G)}\frac{\chi(x)\chi(y)\overline{\chi}(g)}{\chi(1)}.$$
Consequently, $x_1^G\cdot x_2^G = x^G\cdot y^G$.

\smallskip
It remains to prove (\ref{count}) for every non-semisimple 
$g \in G \setminus Z(G)$. Applying Lemma \ref{n=4} we may assume 
that $n \geq 6$. Also, $\St(g) = 0$ for any such a $g$. Next,
$$\left|\frac{\al(g)}{\al(1)}\right| < 0.4$$ 
by Lemmas \ref{bound1} and \ref{bound2}. As in the
proof of \cite[Theorem 1.1.4]{LST1}, we have that 
$$|\beta(g)|^2 \leq |C_G(g)| \leq |G|/q^{2n-2} < q^{2n^2-3n+2}.$$
On the other hand, $\beta(1) >  q^{n^2-n-1}$. It follows that 
$$\left|\frac{\beta(g)}{\beta(1)}\right| < q^{2-n/2} \leq q^{-1} \leq 0.5.$$
Thus 
$$\left|\frac{\al(g)}{\al(1)}\right| + 
   \left|\frac{\beta(g)}{\beta(1)}\right| + 
   \left|\frac{\St(g)}{\St(1)}\right| \leq 0.9,$$
and so we are done by Proposition \ref{values}.    
\end{proof}

\subsection{Other Lie-type groups}
By Theorem \ref{D-type} (and the remark at the beginning of \S2.1),
Theorem \ref{main1} and Corollaries \ref{simple1}, \ref{simple2} hold for $G = Spin^{+}_{4n}(q)$. 
Now we will prove Theorem \ref{main1} and Corollaries \ref{simple1}, \ref{simple2} for the remaining
types. We will write $q=p^f$ as usual. In the cases where $s_1$ and $s_2$ can
be chosen to be equal, we write $s = s_1 = s_2$.

First we deal with a few special cases.

\begin{lem}\label{sl2}
{\sl Theorem \ref{main1} and Corollaries \ref{simple1}, \ref{simple2} hold for $G = SL_{2}(q)$
with $q \geq 4$.}
\end{lem}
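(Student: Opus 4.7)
The plan is to exploit the well-known character table of $G=SL_2(q)$ and the classical vanishing of principal series, resp.\ cuspidal, characters on regular semisimple elements of the Coxeter, resp.\ split, torus. Since $x$ and $y$ will be chosen regular semisimple, Gow's theorem (invoked just above Lemma~\ref{23}) already handles the non-central semisimple elements; it remains to verify the character-ratio inequality (\ref{count}) on the non-central non-semisimple $g\in G$, which for $SL_2(q)$ means $g=\pm u$ with $u\neq 1$ unipotent (and only $u$ when $q$ is even). Write $q=p^f$. For $q\notin\{5,17\}$ I will choose odd primes $r,s\neq p$ (possibly equal) with $r\mid q-\varepsilon_1$ and $s\mid q-\varepsilon_2$ for some $\varepsilon_1,\varepsilon_2\in\{\pm1\}$, if possible with $\varepsilon_1=-\varepsilon_2$, so that $x$ of order $r$ and $y$ of order $s$ lie in opposite maximal tori; here $|y|=s$ is square-free as required. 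The exceptional cases $SL_2(5)$ and $SL_2(17)$ are exactly those for which the unique odd prime in $|G|_{p'}$ (namely $3$) divides only $q+1$; there I set $r=3$ and let $y$ be a generator of the Sylow $2$-subgroup of the split torus, so $|y|=q-1\in\{4,16\}$, a non-square-free $\{2\}$-element. In every case $x$ and $y$ are non-central and regular semisimple.

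The irreducible characters of $SL_2(q)$ are: $1_G$, the Steinberg $\St$ of degree $q$, the principal series (degree $q+1$), the cuspidal discrete series (degree $q-1$), and, when $q$ is odd, two ``Weil-type'' pairs of degrees $(q\pm1)/2$. Principal series and the $(q+1)/2$-Weil pair vanish on Coxeter-torus regular semisimple elements; cuspidal and the $(q-1)/2$-Weil pair vanish on split-torus regular semisimple ones. Hence the only $\chi$ simultaneously non-zero on $x$ and $y$ are $1_G$, $\St$, and (only when $q$ is odd and $x,y$ are placed in specific tori) at most two half-characters. On a unipotent $u$ one has $\St(u)=0$, $\chi(u)=\pm1$ for principal or cuspidal $\chi$, and $|\chi(u)|\leq(\sqrt{q}+1)/2$ for half-characters; since $\St$ is trivial on $Z(G)$ and each half-character carries its correct central character, the same bounds hold at $-u$. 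Using $|\chi(x)\chi(y)|\leq4$ for the half-characters, a crude absolute-value estimate gives
\[
  \Bigg|\sum_{\chi\neq1_G}\frac{\chi(x)\chi(y)\,\overline{\chi}(g)}{\chi(1)}\Bigg|
  \;\leq\;\frac{|\St(x)\St(y)|}{q}\cdot 0 \;+\; \frac{8(\sqrt{q}+1)}{q-1} \;<\;1
\]
for all but the smallest $q$. Combined with the $+1$ contribution of $1_G$, this forces (\ref{count}) to be strictly positive, so $g\in x^G\cdot y^G$. The finitely many small $q$ are handled by direct character-table computation, e.g.\ via \cite{GAP}.

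Every element of $SL_2(q)$ of odd prime order is automatically regular semisimple and outside $Z(G)$, so condition~(i) of Lemma~\ref{23} is met with the prime $r$; together with the main assertion just proved, Lemma~\ref{23} yields $a^G b^G c^G=G$ for every triple $a,b,c$ of order $r$, establishing Corollary~\ref{simple1}(ii). The statements for the simple quotient in Corollary~\ref{simple2} follow by reducing modulo $Z(G)$, noting that the order-$4$ and order-$16$ elements used for $y$ in $SL_2(5), SL_2(17)$ project to involutions of $PSL_2(5), PSL_2(17)$, so at the level of the simple group $|y|$ is again a prime. The main delicacy in the argument will be controlling the Weil-type characters for small odd $q$, where the $O(1/\sqrt{q})$ error is not automatically negligible; for such $q$ (say $q\leq11$) I plan to consult the character tables of $SL_2(q)$ directly to verify (\ref{count}), which should also pin down the exceptional list $\{SL_2(5),SL_2(17)\}$ as precisely those groups in which no square-free choice of $|y|$ lets the construction proceed uniformly.
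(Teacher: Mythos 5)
Your overall strategy matches the paper's: Gow's theorem handles the non-central semisimple classes, the explicit character table of $SL_2(q)$ is used to verify positivity of (\ref{count}) on the remaining signed unipotent classes, and the pair $SL_2(5)$, $SL_2(17)$ is treated as exceptional. But there are two genuine gaps.

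First, your vanishing analysis (the claim that only $1_G$, $\St$, and at most two half-characters are simultaneously non-zero at $x$ and $y$, together with the estimate $8(\sqrt q+1)/(q-1)<1$) is only coherent when $x$ sits in the split torus and $y$ in the Coxeter torus (or vice versa). In that configuration the tori are weakly orthogonal, so in fact \emph{only} $1_G$ and $\St$ survive --- each half-character of degree $(q+1)/2$ vanishes on Coxeter-torus regular elements and each of degree $(q-1)/2$ on split-torus regular elements, so none contributes --- and since $\St(g)=0$ for non-semisimple $g$, the sum (\ref{count}) equals $1$ outright with no estimate needed. The genuinely hard cases are exactly those you gloss over: when $q=9$, when $q>17$ is a Fermat prime (so $q-1$ is a $2$-power), or when $q$ is a Mersenne prime (so $q+1$ is a $2$-power). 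In each of these the only odd prime of $|G|_{p'}$ divides just one of $q\pm 1$, forcing both $x$ and $y$ into the \emph{same} maximal torus. There the full family of cuspidal (resp.\ principal-series) characters is non-zero at both, your truncated character list is wrong, and your bound does not apply (and in any case it fails numerically for $q$ up to roughly $80$, far more than ``the smallest $q$''). These are precisely the cases the paper settles by direct inspection of the table, taking $y=x^2$ (Fermat $q>17$), $y=x$ (Mersenne), or two non-conjugate elements of order $5$ ($q=9$).

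Second, the choice $|y|=q-1=16$ in $SL_2(17)$ is wrong for Corollary \ref{simple2}. The cyclic group $\langle y\rangle$ of order $16$ contains $-I=y^8$, so the image $\bar y$ in $PSL_2(17)$ has order $8$, not $2$; this is not a prime, and (ii) of the corollary fails. The paper instead takes $y$ of order $4$, whose image does have order $2$. Your parallel assertion that the order-$16$ element ``projects to an involution'' is simply false.
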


\begin{proof}
Suppose first that $q \pm 1$ are not $2$-powers. Then we can choose odd 
prime divisors $r$ of $q-1$ and $s$ of $q+1$, and find   
regular semisimple elements $x \in G$ of order $r$ and 
$y \in G$ of order $s$. (In fact, we can choose
$r = \ppd(p,f)$ and $s = \ppd(p,2f)$ if $f \geq 2$ and $q \neq 8,64$.) 
Using the character table of $G$ (see e.g. \cite[\S38]{Do}), 
one can check that $x^G \cdot y^G = G \setminus Z(G)$.  

Suppose $q-1$ is a $2$-power. If $q = 9$, we can take $x,y$ to be non-conjugate 
elements of order $5$ in $G$. Otherwise $q$ is a Fermat prime. 
If $q=5$ or $17$, we can choose $|x| = 3$ and $|y|=4$ (but note that  
there is no desired pair $(x,y)$ of square-free orders). For $S = PSL_2(q)$
with $q = 5$ or $17$, we have $S = x^S \cdot x^S$ where $|x| = 3$. On the other
hand, if $q > 17$ is a Fermat prime, then it is not difficult to 
show that $q+1$ has a prime divisor 
$r \geq 5$. Choosing $x \in G$ of order $r$ and using the character table of
$G$, we can check that $x^G \cdot (x^2)^G = G \setminus Z(G)$.

Suppose now that $q+1$ is a $2$-power, i.e. $q =2^t-1 \geq 7$ is a Mersenne prime.      
Choosing $x \in G$ of order $r=t$, one can check that $x^G \cdot x^G = G \setminus Z(G)$.
\end{proof} 

\begin{lem}\label{sl3}
{\sl Theorem \ref{main1} and Corollaries \ref{simple1}, \ref{simple2} hold for $G = SL_{3}(q)$.}
\end{lem}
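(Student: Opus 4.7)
The approach mirrors that of Theorem \ref{D-type}: we pick two weakly orthogonal maximal tori $T_1,T_2$ of $G=SL_3(q)$ containing regular semisimple elements $x,y$ of prime order, apply \cite[Proposition 2.2.2]{LST1} to restrict attention to unipotent characters in the multiplicity sum (\ref{count}), and evaluate these few characters explicitly.

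For generic $q$, let $T_1$ be the Coxeter torus of order $q^2+q+1$ and $T_2$ the twisted torus of order $q^2-1$; choose $r$ a prime divisor of $q^2+q+1$ with $r\neq 3$ (so $r\nmid q-1$) and $s$ an odd prime divisor of $q+1$ (so $s\nmid q-1$), and take $x\in T_1$ of order $r$, $y\in T_2$ of order $s$.  Both are automatically regular semisimple.  Weak orthogonality of $T_1$ and $T_2$ is verified by an eigenvalue matching inside the dual group $\PGL_3(q)$: an element lying (up to conjugacy) in the duals of both tori has a lift to $GL_3(q)$ whose eigenvalue multiset can be written both as $\{a,a^q,a^{q^2}\}$ (Coxeter side) and as $\{b,b^q,c\}$ with $b\in\FF_{q^2}^\times$, $c\in\FF_q^\times$, up to a common $\FF_q^\times$-scalar; matching these forces $a\in\FF_q$ and hence the element is scalar.

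The unipotent characters of $G$ are $1_G$, the Steinberg $\St$ of degree $q^3$, and one further character $\al$ of degree $q^2+q$, with $\rho:=1_G+\al$ the rank-$2$ permutation character on the $1$-spaces of $\FF_q^3$.  A direct count of rational eigenvectors gives $\rho(x)=0$ and $\rho(y)=1$, so $\al(x)=-1$ and $\al(y)=0$; the standard formula for the Steinberg character on regular semisimple elements yields $\St(x),\St(y)\in\{\pm1\}$.  Therefore only $1_G$ and $\St$ contribute nontrivially to (\ref{count}), and for non-central non-semisimple $g\in G$ one has $\St(g)=0$, so (\ref{count}) equals $1>0$; for non-central semisimple $g$ the result follows from Gow's theorem as recalled at the start of \S2.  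This proves Theorem \ref{main1} and Corollary \ref{simple1} for such $q$, and Corollary \ref{simple2} then follows by Lemma \ref{23}, since any element of $G$ of order $r$ has centralizer equal to $T_1$.

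It remains to treat the exceptional configurations.  For $q\in\{2,3,4\}$ the statement is verified by direct computation in \cite{GAP} (for instance, in $SL_3(3)$ one takes both $x$ and $y$ of order $13$ in two non-conjugate cyclic subgroups of the Coxeter torus and checks the product of classes directly).  When $q$ is a Mersenne prime, $q+1$ is a $2$-power and the above choice of $s$ is unavailable; in this case we replace $T_2$ by the split torus of order $(q-1)^2$ and take $y=\diag(\lambda_1,\lambda_2,\lambda_3)$ of odd prime order $s\mid q-1$ with distinct entries.  The split torus is again weakly orthogonal to $T_1$ by the same eigenvalue argument, and the character sum stays positive because, for $g$ non-semisimple, $\al(g)=\rho(g)-1$ is a small integer (at most $q+1$, as counted by the number of fixed $1$-spaces of $g$), yielding $|\al(x)\al(y)\al(g)/\al(1)|<1$.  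The main obstacle is the uniform handling of these exceptional $q$; once it is dealt with, the identity $\rho=1_G+\al$ reduces every remaining character estimate to counting fixed projective points.
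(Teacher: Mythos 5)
Your argument is correct in outline but takes a genuinely different route from the paper. The paper's proof of this lemma is very short: it picks a \emph{single} element $x$ of order $r=\ppd(p,3f)$ (with $r=7$ for $q=4$, $r=3$ for $q=2$), sets $y=x$, invokes Gow for non-central semisimple targets, and disposes of the remaining (non-semisimple) classes by a direct {\sf Chevie} computation. You instead run the full weak-orthogonality machinery from Theorem \ref{D-type}: two tori $T_1$ (Coxeter, $q^2+q+1$) and $T_2$ (order $q^2-1$, or split $(q-1)^2$ for Mersenne primes), a check of weak orthogonality by eigenvalue-matching in $GL_3(q)$ lying over $\PGL_3(q)$, and an explicit evaluation of the three unipotent characters $1_G,\,\al,\,\St$ via the $2$-transitive permutation character $\rho=1_G+\al$ on $\PP^2(\FF_q)$. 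The computation $\al(x)=-1$, $\al(y)=0$ (generic case) or $\al(y)=2$ (split case) is right, and the bound $\rho(g)\le q+1$ for non-central non-semisimple $g$ (so $|\al(g)|\le q$) makes the sum (\ref{count}) strictly positive. What your version buys: it makes the $SL_3$ case uniform with the rest of \S2.2 and removes the reliance on {\sf Chevie} for generic $q$. What the paper's version buys: it is shorter, uses a single conjugacy class (so automatically achieves $s_1=s_2=r$), and the {\sf Chevie} check handles all small $q$ at once.

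A few imprecisions worth fixing. (1) Your exception set $\{2,3,4\}$ is too large: for $q=2$ and $q=4$ the generic argument runs verbatim ($q+1$ is odd), so the only genuine exception in your scheme is $q=3$, where neither $q+1=4$ nor $q-1=2$ has an odd prime divisor. (2) The phrase \emph{two non-conjugate cyclic subgroups of the Coxeter torus} of $SL_3(3)$ is a misstatement --- the Coxeter torus is itself cyclic of order $13$; you mean two non-conjugate elements of order $13$ (there are several classes, permuted by the Frobenius). (3) The parenthetical bound should read $\rho(g)\le q+1$, hence $\al(g)\le q$, not $\al(g)\le q+1$; this does not affect the conclusion. (4) When applying Lemma \ref{23} to obtain Corollary \ref{simple2} you should observe that the value $\al(x)=-1$, and the sign $\St(x)=\eps_G\eps_{T_1}$, depend only on the type of torus containing $x$, so the argument covers every class of elements of order $r$, not just the one you fixed; you gesture at this via the centralizer remark, and it is worth saying explicitly.
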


\begin{proof}
Let $r = \ppd(p,3f)$ if $q \neq 2,4$, $r = 7$ if $q = 4$, and $r = 3$ if $q=2$. 
Then we can find a regular semisimple element $x \in G$ of order $r$. By \cite{Gow},
$x^G \cdot x^G$ contains all non-central semisimple elements of $G$. Direct computation
using \cite{Chevie} shows that $x^G \cdot x^G$ also contains all other non-central 
classes of $G$. 
\end{proof}

\begin{lem}\label{su3}
{\sl Theorem \ref{main1} and Corollaries \ref{simple1}, \ref{simple2} hold for $G = SU_{3}(q)$
with $q > 2$.}
\end{lem}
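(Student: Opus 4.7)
The plan is to follow the template of Lemma \ref{sl3} almost verbatim, choosing an analogue of the Coxeter torus in $SU_3(q)$. Since the only Zsygmondy exception for $p^{6f}-1$ is $(p,6f)=(2,6)$---i.e.\ $q=2$, which is excluded by hypothesis---for $q\ne 2$ we may let $r=\ppd(p,6f)$. Then $r$ divides $(q^3+1)/(q+1)=q^2-q+1$ and is coprime to every $q^i-1$ with $1\le i\le 5$; in particular $r\nmid |Z(G)|$ and $r$ is coprime to the order of any other maximal torus of $G$. For the very small cases $q\in\{3,4,5\}$ we instead take $r$ to be the largest prime divisor of $q^2-q+1$, which by inspection is different from $3$. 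Any $x\in G$ of order $r$ then has centralizer equal to the cyclic Coxeter torus of order $q^2-q+1$, so $x$ is regular semisimple. We set $y=x$, so $r=s_1=s_2$ has prime (hence square-free) order.

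By Gow's theorem, as recalled at the beginning of Section~2 (cf.\ \cite[Lemma~5.1]{GT}), $x^G\cdot x^G$ already contains every non-central semisimple element of $G$. Hence it remains to show that $x^G\cdot x^G$ meets every non-semisimple non-central class; such a class consists of an element whose Jordan decomposition has a nontrivial unipotent part---either a nontrivial unipotent element or (when $3\mid q+1$) the product of a central cube root of unity with a nontrivial unipotent element---so there are only a bounded number of such classes. Using the generic character table of $SU_3(q)$ encoded in \textsf{Chevie} \cite{Chevie}, we verify positivity of the class multiplication coefficient
$$\sum_{\chi\in\Irr(G)}\frac{\chi(x)^{2}\,\overline{\chi(g)}}{\chi(1)}>0$$
for every such $g$. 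Only the principal character, the Steinberg character, the cuspidal unipotent character of degree $q(q-1)$, and the Deligne--Lusztig characters attached to the Coxeter torus are nonzero on $x$, and each contributes a ratio $|\chi(x)/\chi(1)|$ bounded by a negative power of $q$, so the positive contribution of the principal term dominates for all $q\ge 3$. The remaining small values of $q$ are handled by direct computation with \cite{GAP}, exactly as in Lemma \ref{sl3}.

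Finally, Corollaries \ref{simple1} and \ref{simple2} follow at once: hypothesis (i) of Lemma \ref{23} holds because every element of $G$ of order $r$ is regular semisimple, and hypothesis (ii) holds with $y=x$, so $a^G\cdot b^G\cdot c^G=G$ for any three elements $a,b,c\in G$ of order $r$. Since $|x|=r$ is prime, $G=SU_3(q)$ is not among the four exceptional groups $SL_2(5),SL_2(17),Sp_4(3),Spin_9(3)$ listed in Theorem \ref{main1}, so the square-free conclusion is automatic. The main obstacle I anticipate is keeping the character-ratio bound uniform as $p$ and $\gcd(3,q+1)$ vary and enumerating the unipotent and mixed classes correctly; this is a finite bookkeeping exercise once the explicit \textsf{Chevie} table is in hand.
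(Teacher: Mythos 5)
Your argument matches the paper's proof of this lemma in all essentials: both take $x=y$ of order $r=\ppd(p,6f)$ (which exists precisely because $q>2$), note that such $x$ is regular semisimple in the Coxeter torus of order $q^2-q+1$, and verify $x^G\cdot x^G\supseteq G\setminus Z(G)$ via the character-sum criterion using the \textsf{Chevie} generic table. The extra discussion of $q\in\{3,4,5\}$ is harmless but unnecessary, since $\ppd(p,6f)$ already exists (and coincides with the largest prime divisor of $q^2-q+1$) in those cases.
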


\begin{proof}
Note that $r = \ppd(p,6f)$ exists since $q > 2$. Then we can find a regular semisimple
element $x \in G$ of order $r$. Again, using \cite{Chevie} one can check that 
$x^G \cdot x^G \supseteq G \setminus Z(G)$. 
\end{proof}

\begin{lem}\label{sp4}
{\sl Theorem \ref{main1} and Corollaries \ref{simple1}, \ref{simple2} hold for $G = Sp_{4}(q)$
where $q \geq 5$ is odd.}
\end{lem}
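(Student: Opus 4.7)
The plan is to implement the character-sum strategy from \S2.1 in the rank-$2$ setting $G = Sp_4(q)$ with $q = p^f$ odd and $q \geq 5$. First I would select two weakly orthogonal $F$-stable maximal tori $T_1, T_2$ of $\GC$, choose regular semisimple prime-order elements $x \in T_1^F$ and $y \in T_2^F$, and then verify (\ref{count}) on every non-semisimple $g \in G \setminus Z(G)$ by bounding character ratios. By Lemma \ref{23}, this also yields the triple-product statement of Corollary \ref{simple1}(ii), and Corollary \ref{simple2} follows at once since $x$ will have prime (hence square-free) order.

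For the choice of $(r, T_1)$, let $T_1$ be the Coxeter torus (cyclic of order $q^2+1$) and take $r := \ppd(p, 4f)$; this primitive prime divisor exists by Zsygmondy (since $q \geq 5$), is coprime to $q^2-1$, and any $x \in T_1$ of order $r$ is automatically regular semisimple. For $(s, T_2)$ I would first try a torus $T_2$ of type $(q+1)^2$ together with an odd prime divisor $s$ of $q+1$; when $q$ is a Fermat prime (so $q+1$ is a $2$-power) I would instead use a torus of type $(q-1)^2$ with $s$ an odd prime divisor of $q-1$, which exists since the odd part of $q-1$ is at least $3$ when $q \geq 5$ is Fermat. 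In each case, a regular semisimple $y \in T_2$ of order $s$ is readily produced by choosing its two diagonal parameters so that its four eigenvalues on the natural module are pairwise distinct. The orders $|T_1|$ and $|T_2|$ are coprime up to a $2$-element, and any $2$-power element common to the dual tori would have spectrum in $\{\pm 1\}$ on the natural module and hence lie in $Z(G)$; so $T_1, T_2$ are weakly orthogonal in the sense of \cite[Definition 2.2.1]{LST1}. By \cite[Proposition 2.2.2]{LST1}, only unipotent characters of $G$ can be simultaneously nonzero at $x$ and at $y$.

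Now $Sp_4(q)$ has precisely six unipotent characters, with generic values tabulated in \textsf{Chevie} \cite{Chevie}. A direct look-up identifies the short list of unipotent characters which fail to vanish at both $x$ and $y$; in each case $|\chi(x)|$ and $|\chi(y)|$ are bounded by small absolute constants. For any non-semisimple $g \in G \setminus Z(G)$, the Steinberg character vanishes, while for each remaining relevant $\chi$ one either reads $\chi(g)$ directly from \textsf{Chevie} or invokes the centralizer bound $|\chi(g)|^2 \leq |C_G(g)|$ together with the estimate $|C_G(g)| \leq |G|/q^3$ (valid for non-central $g$ in $Sp_4(q)$). Combined with the fact that the degrees $\chi(1)$ for the surviving nontrivial characters grow as positive powers of $q$, this makes the sum in (\ref{count}) comfortably positive for $q \geq 5$. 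The main obstacle is the bookkeeping for the Fermat values of $q$, where $q+1$ is a $2$-power and the primary choice of $T_2$ must be replaced, and for the sporadic small values of $q$ (say $q = 5, 7, 9$) where one may need to check (\ref{count}) directly on the handful of non-semisimple classes; once these exceptional sub-cases are handled, what remains is a bounded-complexity generic \textsf{Chevie} verification.
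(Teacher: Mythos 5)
Your overall architecture (regular semisimple $x$ of order $\ppd(p,4f)$ in the Coxeter torus, a second regular semisimple element $y$ in a different torus, weak orthogonality plus character estimates) is the right general idea, but the proposal breaks on the single most important point of this case: in $Sp_4(q)$ one generally \emph{cannot} take $y$ to be of prime order, and this is precisely why $C_2$ is one of the exceptional types $C_{2n}$ in Theorem \ref{main1} and Corollary \ref{simple2} where $y$ is an $\{s_1,s_2\}$-element of order $s_1 s_2$.

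Concretely: an element $y$ of odd prime order $s$ lying in a torus of type $(q+1)^2$, $(q-1)^2$, or $q^2-1$ has eigenvalues on the natural module of the form $\mu_1, \mu_1^{-1}, \mu_2, \mu_2^{-1}$ with $\mu_i^s=1$ (or $\lam,\lam^q,\lam^{-1},\lam^{-q}$ in the $q^2-1$ case, which collapses to the same picture). These four eigenvalues can be pairwise distinct only if $s\ge 5$; for $s=3$ there are only two non-identity cube roots of unity and the eigenvalues always come in repeated pairs. There is therefore \emph{no} regular semisimple element of order $3$ in $Sp_4(q)$ at all, and since $3\nmid q^2+1$, the Coxeter torus does not help either. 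For $q = 5$, $7$, $17$ (and more generally whenever $q^2-1 = 2^a 3^b$) the only odd primes dividing $q^2-1$ are $3$, so your ``readily produced'' regular $y$ of prime order simply does not exist. The paper's proof accordingly takes $y$ of order $s_1 s_2$ with $s_1 = \ppd(p,2f)$, $s_2 = \ppd(p,f)$ in the cyclic torus of order $q^2-1$ when both primes exist, and $y$ of order $6$ in a $(q\pm1)^2$-type torus when $q$ is a Fermat or Mersenne prime; this is spelled out via the class names $B_1,\ldots,B_4$ in Srinivasan's table \cite{Sri}.

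Separately, the Fermat/Mersenne dichotomy in your fallback is inverted: a Fermat prime $q$ has $q-1$ a $2$-power (not $q+1$), and a Mersenne prime has $q+1$ a $2$-power. As written, your fallback case ``$q$ Fermat, so $q+1$ is a $2$-power, use an odd prime divisor of $q-1$, which exists since the odd part of $q-1$ is at least $3$'' is internally contradictory, since for Fermat $q$ the number $q-1$ \emph{is} the $2$-power and has odd part $1$. Even after swapping the labels, the resulting odd prime will typically be $3$, which runs into the same regularity obstruction as above. To repair the proposal you would need to abandon prime order for $y$ and allow a product of two primes, at which point you essentially reproduce the paper's construction.
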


\begin{proof}
We will follow the proof of Lemma \ref{sl2} and use the notation and the character
table of $G$ as described in \cite{Sri}. First we let $r = \ppd(p,4f)$ and consider
$x \in G$ belonging to the class $B_1((q^2+1)/r)$ (so that $x$ is regular semisimple
of order $r$). If both $q \pm 1$ are not $2$-powers, then we can find odd primes
$s_1 = \ppd(p,2f)$ and $s_2 = \ppd(p,f)$ and consider
$y \in B_2((q^2-1)/s_1s_2)$ of order $s_1s_2$. If 
$q = 9$, take $y \in B_4(2,4)$ of order $5$.
If $q \geq 5$ is a Fermat prime, consider the element $y \in B_4((q+1)/6,(q+1)/3)$
of order $6$. If $q \geq 7$ is a Mersenne prime, take $y \in B_3((q-1)/6,(q-1)/3)$
of order $6$. In all cases, one can check that $y$ is also regular semisimple, and 
$x^G \cdot y^G \supseteq G \setminus Z(G)$.      
\end{proof}

\begin{lem}\label{sp}
{\sl Theorem \ref{main1} and Corollaries \ref{simple1}, \ref{simple2} hold if $G$ is one of the following 
groups: 
$$\left\{\begin{array}{l}
   SU_4(2),~SL_6(2),~SL_7(2),~Sp_6(2),~\Omega^-_8(2),\\
   G_2(4),~\tw2 F_4(2)',~Sp_4(3),~Sp_8(3),~Spin_9(3).
  \end{array} \right.$$}
\end{lem}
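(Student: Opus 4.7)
The plan is purely computational: each of the ten groups in the list has order small enough that its complete character table is available (via \textsf{GAP}, \textsf{CHEVIE}, or the \textsc{Atlas}), so the approach is to exhibit, for each $G$ in turn, specific regular semisimple elements $x,y \in G$ of the prescribed orders and then verify the covering property directly from the character table by means of the class-multiplication coefficient formula \eqref{count}. This is exactly the pattern used in Lemmas \ref{sl3}, \ref{su3}, and \ref{sp4}, now applied to a larger (but still finite) set of exceptional groups.

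First I would select, for each group, the primes $r$ (and $s_1,s_2$ where appropriate). A natural choice is to take $r$ to be a primitive prime divisor of $q^e - \varepsilon$ for a Coxeter-type exponent $e$: for instance $r = 5$ for $SU_4(2)$, $r = 7$ for $Sp_6(2)$, $r = 31$ for $SL_6(2)$, $r = 127$ for $SL_7(2)$, $r = 17$ for $\Omega^-_8(2)$, $r = 13$ for $G_2(4)$, $r=13$ for ${}^2F_4(2)'$, together with analogous choices for $Sp_8(3)$; for the three $B_n/C_n$-type cases $Sp_4(3)$, $Sp_8(3)$, $Spin_9(3)$, one allows $s_1 \neq s_2$ (as permitted in Theorem \ref{main1}), and for the two exceptional groups $Sp_4(3)$ and $Spin_9(3)$ one is forced outside the square-free regime and must allow $|y|$ to be a prime power, in accordance with the list of exceptions in the statement of Theorem \ref{main1}.

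For each chosen class I would verify two things from the table: (a) every element of order $r$ in $G$ is regular semisimple, which is confirmed by inspecting the centralizer order (it should equal the order of the corresponding maximal torus); and (b) the character sum \eqref{count} is positive for every non-central non-semisimple class of $G$. By the theorem of Gow recalled at the start of \S2, every non-central semisimple $g$ is already covered by $x^G \cdot y^G$, so only the (finitely many) non-semisimple non-central classes in each $G$ require direct verification. Once \eqref{count} is established, Theorem \ref{main1} follows for the group in question; parts (i) and (ii) of Corollaries \ref{simple1} and \ref{simple2} are immediate, the latter following from Lemma \ref{23} since its hypothesis (i) is precisely the statement that elements of order $r$ are regular semisimple.

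The main obstacle is not conceptual but organizational: each group requires its own prime choice and its own table scan, and for the larger members of the list (notably $SL_7(2)$, $\Omega_8^-(2)$, and $Sp_8(3)$) the character tables are sizeable, so computer algebra is indispensable. The only additional subtlety arises for $Sp_4(3)$ and $Spin_9(3)$, where one must check that no pair $(x,y)$ of regular semisimple elements of square-free order lying in appropriate tori satisfies $x^G\cdot y^G \supseteq G \setminus Z(G)$, and then exhibit a suitable $y$ of prime-power order that does, thereby confirming that these groups genuinely belong to the exceptional list in Theorem \ref{main1}.
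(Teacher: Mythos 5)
The overall strategy—pick regular semisimple elements, verify the character-sum inequality \eqref{count} class by class using a computer—is indeed what the paper does for most of the groups in the list, and your remark that Gow's result reduces the verification to non-semisimple non-central classes is correct. However, there is a genuine gap in your plan: your premise that ``each of the ten groups in the list has order small enough that its complete character table is available'' fails precisely for $Sp_8(3)$ and $Spin_9(3)$. Only the character tables of the \emph{simple} quotients $PSp_8(3)$ and $\Omega_9(3)$ are available; the tables of the quasisimple covers $Sp_8(3)$ and $Spin_9(3)$ are not in the \textsf{GAP} library (and the latter is stated in the paper to be unknown). Consequently, the direct evaluation of \eqref{count} you propose cannot even be set up for those two groups, and the paper has to work around this. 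For $Sp_8(3)$ it constructs a specific $y = \diag(v,v^2)$ of order $10$ that is conjugate to $yz$ for the central involution $z$, which forces every faithful character of $G$ to vanish on $y$ and reduces the sum \eqref{count} to a computation inside $\Irr(PSp_8(3))$. For $Spin_9(3)$ it instead uses L\"ubeck's list of character degrees to isolate the only four characters that could be nonzero at both $x$ and $y$, checks those degrees occur already in $\Irr(\Omega_9(3))$ so the characters are trivial on $Z(G)$, and only then appeals to the known table of the simple quotient.

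A secondary inaccuracy: you assert that for both $Sp_4(3)$ and $Spin_9(3)$ one is ``forced \ldots to allow $|y|$ to be a prime power.'' That is true only for $Sp_4(3)$ (where the paper takes $|y|=8$). For $Spin_9(3)$ the paper's regular semisimple witness $y'$ for Theorem \ref{main1} is a $\{2,13\}$-element whose order is not a prime power, and for Corollary \ref{simple1} a different element $y$ of square-free order $39$ is used (at the cost of $y$ no longer being regular semisimple, which is exactly the exception flagged in Corollary \ref{simple1}(i)). Finally, note that some of your suggested primes differ from the paper's working choices (e.g.\ the paper uses $r=s=7$, not $13$ or $17$, for $G_2(4)$ and $\Omega_8^-(2)$; in particular $\ppd(2,6)$ does not exist, so a Coxeter-type primitive prime divisor is unavailable for $G_2(4)$), so the ``natural'' ppd selection has to be adjusted case by case.
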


\begin{proof}
1) Using \cite{GAP}, we can find a regular semisimple elements $x \in G = SU_4(2)$ 
of order $5$ such that $x^G \cdot x^G \supseteq G \setminus \{1\}$. 
Similarly, if $G = SL_6(2)$, respectively $SL_7(2)$, $Sp_6(2)$, $\Omega^-_8(2)$, $G_2(4)$, 
$\tw2 F_4(2)'$, we can choose $x=y$ of order $31$, $127$, $7$, $7$, $7$, and $13$,
respectively. 

\smallskip
The case $G = Sp_4(3)$ is a genuine exception to the main claims in Theorem 
\ref{main1} and Corollary \ref{simple1}. Using \cite{GAP} one can check that

(i) there is no pair $(x,y) \in G \times G$ such that 
$x^G \cdot y^G \supseteq G \setminus Z(G)$ and $|x|$, $|y|$ are square-free;

(ii) even though $\bar{x}^S \cdot \bar{y}^S \supseteq S \setminus \{1\}$ with
$|\bar{x}| = |\bar{y}| = 5$ in 
$S = G/Z(G)$, the pair $(\bar{x},\bar{y})$ does not lift 
to any pair $(x,y) \in G \times G$ with $x^G \cdot y^G \supseteq G \setminus Z(G)$;

(iii) however, $x^G \cdot y^G = G \setminus Z(G)$ and 
$y^G \cdot y^G = G$ if $|x| = 5$ and $|y| = 8$.   
 
\smallskip
3) Let $G = Sp_8(3)$ and consider a regular semisimple element $x \in G$ of order $41$. 
Next, let $v \in Sp_4(3)$ be of order $10$ and let 
$$y := \diag(v,v^2) \in Sp_4(3) \times Sp_4(3) \hookrightarrow G.$$ 
Then $y$ is also regular semisimple, of order $10$ both in $G$ and in 
$S = G/Z(G) = PSp_8(3)$. Moreover, if $z$ denotes the central involution of 
$G$, then $y$ and $yz$ are conjugate in $G$. It follows that all faithful 
irreducible characters of $G$ vanish on $y$. The character table of $S$ (not of $G$!) is 
available in \cite{GAP}. One can now check that $y$ belongs to the class $10c$ in 
$S$, and there are precisely three irreducible characters of $S$ which are 
nonzero at both $x$ and $y$: $1_S$, $\St$, and $\al$ of degree $235,872$. Moreover,
$$\St(x) = -1,~~\St(y) = 1,~~\max_{g \in G \setminus Z(G)}|\St(g)| = 3^{10}$$ 
and
$$\al(x) = -1,~~\al(y) = 2,~~\max_{g \in G \setminus Z(G)}|\al(g)| = 29,484.$$
Thus for any $g \in G \setminus Z(G)$ we have 
$$\left|\sum_{\chi \in \Irr(G)}\frac{\chi(x)\chi(y)\overline{\chi}(g)}{\chi(1)}\right|
  \geq 1 - \frac{2 \cdot 29,484}{235,872} - \frac{3^{10}}{3^{16}} > 0.7,$$
i.e. $x^G \cdot y^G = G \setminus Z(G)$.  

\smallskip
4) Let $G = Spin_9(3)$ and consider a regular semisimple element $x \in G$ of order $41$.
We can also find $y \in G$ of order $39$ both in $G$ and in $S = G/Z(G) = \Omega_9(3)$.
Furthermore, there is $y' \in G$ which has order $26$ in $S$, and 
$y'$ is regular semisimple.
Clearly, if $\chi \in \Irr(G)$ and $\chi(x)\chi(y) \neq 0$ or 
$\chi(x)\chi(y') \neq 0$, then $\chi(1)$ is 
coprime to $13 \cdot 41$. The character table of $G$ is still unknown, but the degrees of 
irreducible characters of $G$ have been determined by F. L\"ubeck \cite{Lu}. Now we can 
check that there are precisely four irreducible characters of $G$ of degree coprime to
$13 \cdot 41$: $1_G$, $\al$ of degree $1,680$, $\beta$ of degree $11,022,480$, and 
$\St$. The character table of $S$ is 
available in \cite{GAP}, and $S$ also has irreducible characters of these four 
degrees. Thus the four aforementioned irreducible characters are actually 
trivial at $Z(G)$.
Again using \cite{GAP} one can check that
$$\al(y) = \St(y) = 0,~~\al(x)=\beta(x)=-1,~~\beta(y) = 1,~~
  \max_{g \in G \setminus Z(G)}|\beta(g)| = 408,240,$$
and 
$$|\al(y')| = |\beta(y')|=|\St(y')| = 1,~~
  \max_{g \in G \setminus Z(G)}|\al(g)| = 560,~~\max_{g \in G \setminus Z(G)}|\St(g)| = 3^{12}.$$
Thus for any $g \in G \setminus Z(G)$ we have 
$$\left|\sum_{\chi \in \Irr(G)}\frac{\chi(x)\chi(y)\overline{\chi}(g)}{\chi(1)}\right|
  \geq 1 - \frac{408,240}{11,022,480} > 0,$$
i.e. $x^G \cdot y^G = G \setminus Z(G)$. Similarly,
$$\left|\sum_{\chi \in \Irr(G)}\frac{\chi(x)\chi(y')\overline{\chi}(g)}{\chi(1)}\right|
  \geq 1 - \frac{560}{1,680}-\frac{408,240}{11,022,480}-\frac{3^{12}}{3^{16}} > 0.5$$
for all non-central $g \in G$,
whence $x^G \cdot (y')^G = G \setminus Z(G)$.
Furthermore, using \cite{GAP} one can check that $x^S \cdot x^S = S$. 
Thus we can use the pair $(x,y')$ for Theorem \ref{main1} and Corollary \ref{simple2},
and the pair $(x,y)$ for Corollary \ref{simple1}. 
\end{proof}

In what follows we will assume that $G$ is not isomorphic to any of the groups listed 
in Lemmas \ref{sl3}--\ref{sp}.

\subsubsection{Type $A_m$ with $n \geq 3$}
Let $G = SL_n(q)$ with $n \geq 4$, $(n,q) \neq (6,2)$, $(7,2)$. 
We aim to find $x$ and $y$ 
contained in tori $T_1$ of order $(q^n-1)/(q-1)$ and $T_2$ of order 
$q^{n-1}-1$. To this end, choose $r = \ppd(p,nf)$, 
and $s = \ppd(p,(n-1)f)$ if $(n,q) \neq (4,4)$ and $s = 7$ otherwise.
In all cases, it is easy to check that there exist regular semisimple 
elements $x \in T_1$ of order $r$ and $y \in T_2$ of order $s$.
In fact, any element of order $r$ in $G$ is regular semisimple
(and the same holds in all subsequent cases of our proof).
Now the tori $T_1$ and $T_2$ are weakly orthogonal (cf. 
\cite[Proposition 2.1]{MSW} or \cite[Proposition 2.3.1]{LST1}). Hence,
by \cite[Proposition 2.2.2]{LST1}, if $\chi \in \Irr(G)$ is nonzero at both $x$ 
and $y$ then $\chi$ is unipotent. This in turn implies by \cite{DL} that 
the value of $\chi$ at any regular semisimple element in $T_i$ does not depend
on the particular choice of the element. Hence we can apply 
\cite[Theorem 2.1]{MSW} to conclude that $x^G\cdot y^G \supseteq G \setminus Z(G)$.
(In subsequent cases we will frequently allude to this argument without 
mentioning it explicitly.) 

\subsubsection{Type $\tw2 A_m$ with $m \geq 3$}
Let $G = SU_n(q)$ with $n \geq 4$ and $(n,q) \neq (4,2)$.
First we consider the case $n \geq 5$ is odd. Then  
we can choose $r = \ppd(p,2nf)$ and find a regular 
semisimple element $x \in G$ of order $r$ that belongs to a maximal torus $T_1$ of 
order $(q^n+1)/(q+1)$. Next, if $n \equiv 1 (\mod 4)$ then we choose
$s = \ppd(p,(n-1)f)$. When $n \equiv 3 (\mod 4)$, we choose 
$s = \ppd(p,(n-1)f/2)$ if $(n,q) \neq (7,2^2)$, and 
$s = 7$ otherwise. One can show that there is a regular 
semisimple element $y \in G$ of order $s$ that belongs to a maximal torus $T_2$ of 
order $q^{n-1}-1$. By \cite[Theorem 2.2]{MSW} we have 
$x^G\cdot y^G \supseteq G \setminus Z(G)$.

Suppose now that $n \geq 4$ is even.  
Then we can find a regular semisimple element $x$ of order $r$ 
that belongs to a maximal torus $T_1$ of 
order $q^{n-1}+1$, where $r = \ppd(p,2(n-1)f)$. Next, 
if $n \equiv 0 (\mod 4)$ then we choose
$s = \ppd(p,nf)$. When $n \equiv 2 (\mod 4)$, we choose 
$s = \ppd(p,nf/2)$ if $(n,q) \neq (6,2^2)$, and 
$s = 7$ otherwise. One can show that there is a regular 
semisimple element $y \in G$ of order $s$ that belongs to a maximal torus $T_2$ of 
order $(q^n-1)/(q+1)$. Applying \cite[Theorem 2.2]{MSW} we see that 
$x^G\cdot y^G \supseteq G \setminus Z(G)$.

\subsubsection{Types $B_n$ and $C_n$ with $n \geq 2$}
Suppose that $G = Spin_{2n+1}(q)$ or $Sp_{2n}(q)$, with $n \geq 2$,
$(n,q) \neq (2,2)$, $(2,3)$, $(3,2)$, $(4,3)$.  
We aim to find $x$ and $y$ contained in tori $T_1$ of order $q^n+1$
and $T_2$ of order $q^n-1$. To this end, we take $r = \ppd(p,2nf)$. 
If $n$ is {\it odd}, then we choose $s = \ppd(p,nf)$ if 
$(n,q) \neq (3,4)$ and $s = 7$ if $(n,q) = (3,4)$. It is easy to check
that there exist regular semisimple elements $x \in T_1$ of order $r$ and
$y \in T_2$ of order $s$, and furthermore 
$x^G\cdot y^G \supseteq G \setminus Z(G)$ by \cite[Theorem 2.3]{MSW}. 

Assume that $2|n$ and $n \geq 4$. Then we choose $s_1 = \ppd(p,nf)$ if 
$(n,q) \neq (6,2)$ and $s_1 = 3$ if $(n,q) = (6,2)$.  Furthermore, if
$n \geq 6$, we take $s_2 = \ppd(p,nf/2)$ when $(n,q) \neq (12,2)$ and 
$s_2 = 7$ when $(n,q) = (12,2)$. If $n = 4$,  we choose 
$s_2 = \ppd(p,nf/2)$ whenever $q$ is not a Mersenne prime,
and $s_2 = 3$ if $q \geq 7$ is a Mersenne prime.
In all cases, one can check that there exist regular 
semisimple elements $x \in T_1$ of order $r$ and $y \in T_2$
of order $s_1s_2$.
(For instance, we can choose $y$ of order $91$ if $(n,q) = (12,2)$.
If $n = 4$ and $q=2^a-1 \geq 7$ is a Mersenne prime, then note that 
$Sp_8(q)$, respectively $\Omega_9(q)$, contains a cyclic subgroup of 
order $q^4-1$, respectively $(q^4-1)/2$. It follows in this case 
that $G$ contains a semisimple element $y$ of order $s_1s_2$, 
and it is easy to check that $y$ is regular.) 
Now, $x^G\cdot y^G \supseteq G \setminus Z(G)$ by
\cite[Theorem 2.3]{MSW}.    

Finally, assume that $n=2$ and $q \geq 4$. Since $Spin_5(q) \cong Sp_4(q)$ and
by Lemma \ref{sp4}, we may assume that $G = Sp_4(q)$ and $q=2^f$.  
Choose $s_1 = \ppd(2,2f)$ if $f \neq 3$ and $s_1 = 3$ if $f = 3$, and 
$s_2 = \ppd(2,f)$ if $f \neq 6$ and $s_2 = 3$ if $f = 6$. One 
readily checks that there exist regular semisimple elements $x \in T_1$
of order $r$ and $y \in T_2$ of order $s_1s_2$, and we are done as before.
(Note that the non-simple group $Sp_4(2)$ is excluded in the above analysis;
for $Sp_4(2)' \cong \AAA_6$ we can choose $r=s=5$.)

\subsubsection{Types $D_n$ and $\tw2 D_n$}
Note that the case of $D_n$ with $2|n$ is already completed by Theorem \ref{D-type}.
Assume now that $G = Spin^+_{2n}(q)$, where $n \geq 5$ is {\it odd}. Then we can 
choose $r = \ppd(p,nf)$ and find a regular 
semisimple element $x$ of order $r$ that belongs to a maximal torus $T_1$ of 
order $q^n-1$, see e.g. \cite[Lemma 2.4]{MT}. Similarly, we can find a regular 
semisimple element $y$ of order $s$ that belongs to a maximal torus $T_2$ of 
order $(q^{n-1}+1)(q+1)$, for some $s = \ppd(p,2(n-1)f)$.  By 
\cite[Theorem 2.6]{MSW} we have 
$x^G\cdot y^G \supseteq G \setminus Z(G)$.

Suppose now that $G = Spin^-_{2n}(q)$ with $n \geq 4$ and $(n,q) \neq (4,2)$. 
Then we can choose $r = \ppd(p,2nf)$ and find a regular 
semisimple element $x$ of order $r$ that belongs to a maximal torus $T_1$ of 
order $q^n+1$, see e.g. \cite[Lemma 2.4]{MT}. Similarly, we can find a regular 
semisimple element $y$ of order $s$ that belongs to a maximal torus $T_2$ of 
order $(q^{n-1}+1)(q-1)$, where $s = \ppd(p,2(n-1)f)$. 
Applying  \cite[Theorem 2.5]{MSW}, we conclude that 
$x^G\cdot y^G \supseteq G \setminus Z(G)$.

\subsubsection{Exceptional groups}
In the cases where $G = \tw2 B_2(q)$ with $q \geq 8$, respectively 
$G = \tw2 G_2(q)$ with $q \geq 27$, by \cite[Theorem 7.1]{GM} we can take   
$r = s = \ppd(2,4f)$, respectively $r = s = \ppd(3,6f)$. Similarly, in the cases 
where $G = G_2(q)$ with $q \neq 2$, respectively 
$G = \tw3 D_4(q)$, by \cite[Theorem 7.2]{GM} we can take   
$r = s = \ppd(p,3f)$ if $q \neq 4$ and 
$r=s=7$ if $q = 4$, respectively $r = s = \ppd(p,12f)$ (here, the existence
of regular semisimple elements of order $r$ follows from \cite[Lemma 2.3]{MT}). 
If $G = F_4(q)$, then $G$ contains regular semisimple elements 
$x \in G$ of order $r = \ppd(p,12f)$ and $y \in S$ of order 
$s = \ppd(p,8f)$ by \cite[Lemma 2.3]{MT}, and 
$x^G \cdot y^G = G \setminus \{1\}$ by \cite[Theorem 7.6]{GM}. Similarly,
if $G = E_8(q)$, then $G$ contains regular semisimple elements 
$x \in G$ of order $r = \ppd(p,24f)$ and $y \in G$ of order 
$s = \ppd(p,20f)$, and 
$x^G \cdot y^G = G \setminus \{1\}$ by \cite[Theorem 7.6]{GM}.

\smallskip
Suppose that $G = E_7(q)_{sc}$. By \cite[Lemma 2.3]{MT}, $G$ contains a 
regular semisimple element $x \in G$ of order $r = \ppd(p,18f)$ (and 
with centralizer of order $(q+1)(q^6-q^3+1)$). Furthermore, it is 
shown in the proof of \cite[Theorem 4.2]{HSTZ} that there is a regular 
semisimple $y \in G$ of order  $s = \ppd(p,7f)$. Now we can apply 
\cite[Theorem 7.6]{GM} to conclude that $x^G \cdot y^G = G \setminus Z(G)$. 

\smallskip
Next let $G = \GC^F = E^\eps_6(q)_{sc}$, with $\eps = +$ for 
$E_6(q)_{sc}$ and $\eps = -$ for $\tw2 E_6(q)_{sc}$. By \cite[Lemma 2.3]{MT}, 
$G$ contains a regular semisimple element $x \in G$ of order $r$, with $r = \ppd(p,9f)$ 
if $\eps = +$ and $r = \ppd(p,18f)$ if $\eps = -$ (and with centralizer of
order $q^6+\eps q^3+1$). Next we choose $s = \ppd(p,8f)$ and let
$y \in G$ be of order $s$. Applying \cite[Lemma 2.2]{MT}, we see that 
$C_\GC(y)$ is connected and $s$ divides $|(Z(C_\GC(y))^\circ)^F|$. The order of the latter
(for all $y$) is listed in \cite{Der}. Using this, one can easily check that 
$C_\GC(y)$ is a torus, i.e. $y$ is regular, and 
$|C_G(y)| = (q^4+1)(q^2-1)$. It then again follows by \cite[Theorem 7.6]{GM} 
that $x^G \cdot y^G = G \setminus Z(G)$. 

\smallskip
Finally, the case of $\tw2 F_4(q)$ with $q > 2$ follows from the following
statement:

\begin{lem}\label{2f4}
{\sl Let $G = \tw2 F_4(q)$ with $q = 2^f > 2$. Then $G$ admits regular 
semisimple elements $x$ of order $r = \ppd(2,12f)$ and $y$ of order $s = \ppd(2,6f)$,
such that $x^G \cdot y^G = G \setminus \{1\}$.}
\end{lem}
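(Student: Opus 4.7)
The plan is to reproduce the pattern used above for the other exceptional groups: exhibit regular semisimple elements $x, y$ in weakly orthogonal maximal tori of $G = \tw2 F_4(q)$ and combine \cite[Proposition 2.2.2]{LST1} with Gow's theorem \cite{Gow} and explicit character-ratio bounds.

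First, since $q = 2^f > 2$ (so in fact $q \geq 8$), Zsigmondy's theorem furnishes primes $r = \ppd(2,12f)$ and $s = \ppd(2,6f)$, with $r \mid \Phi_{12}(q) = q^4 - q^2 + 1$, $s \mid \Phi_6(q) = q^2 - q + 1$, $r > 12f$, and $s > 6f$. Both $q^4 - q^2 + 1$ and $q^2 - q + 1$ occur as orders of cyclic maximal tori of $G$ (the Coxeter torus $T_1$ of order $q^4 - q^2 + 1$ and a torus $T_2$ of order $q^2 - q + 1$); this follows from the classification of $F$-stable maximal tori of $\tw2 F_4$. Pick $x \in T_1$ of order $r$ and $y \in T_2$ of order $s$. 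A centralizer calculation along the lines of \cite[Lemma 2.2]{MT} shows that $r$ (respectively $s$) cannot divide the order of $|L^F|/|Z(L)^F|$ for any proper Levi $L \subsetneq \GC$ of type other than $T_1$ (respectively $T_2$) itself, so $C_\GC(x)^\circ = T_1$ and $C_\GC(y)^\circ = T_2$; in particular $x$ and $y$ are regular semisimple.

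Next, I verify that $T_1$ and $T_2$ are weakly orthogonal. Since $(q^2-q+1)(q^2+q+1) = q^4+q^2+1$, one has $q^4 - q^2 + 1 \equiv -2q^2 \pmod{q^2-q+1}$; for $q$ even, $2q^2$ is a $2$-power while $q^2-q+1$ is odd, hence $\gcd(q^4-q^2+1,\, q^2-q+1) = 1$. Coprimality of $|T_1|$ and $|T_2|$ (and triviality of $Z(G)$) forces any element of $\GCDF$ lying simultaneously in images of the two dual tori to be trivial, which gives weak orthogonality in the sense of \cite[Definition 2.2.1]{LST1}. By \cite[Proposition 2.2.2]{LST1}, every $\chi \in \Irr(G)$ with $\chi(x)\chi(y) \neq 0$ is unipotent, and by \cite{DL} its values at $x$ and at $y$ depend only on the types of $T_1$ and $T_2$.

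Finally, I identify the surviving unipotent characters and control their ratios at arbitrary $g \in G \setminus \{1\}$. The unipotent characters of $\tw2 F_4(q)$ and their generic degrees are tabulated in \cite{Chevie}; a character contributes nothing at $x$ unless its generic degree is coprime to $r$, and similarly for $y$. Inspection of the $21$ unipotent characters leaves $1_G$, the Steinberg $\St$, and a short list of further unipotent characters of moderate generic degree, each taking values in $\{0,\pm 1\}$ on $x$ and on $y$. For semisimple $g \neq 1$, Gow's theorem \cite{Gow} applied to the regular semisimple pair $(x,y)$ (as noted at the beginning of \S2) yields $g \in x^G \cdot y^G$. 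For non-semisimple $g$ we have $\St(g) = 0$, and for each remaining surviving character $\chi$ the bound $|\chi(g)|^2 \leq |C_G(g)| \leq |G|/q^{c}$ (with $c$ the minimum codimension of a nontrivial centralizer in $\tw2 F_4$) combines with the lower bound on $\chi(1)$ to give $|\chi(g)/\chi(1)|$ small enough that
\[
 \sum_{\chi \in \Irr(G)} \frac{\chi(x)\chi(y)\overline{\chi}(g)}{\chi(1)} > 0,
\]
so $g \in x^G \cdot y^G$. The main obstacle is the small case $q = 8$, where the generic character-ratio estimate may be too weak to close the gap uniformly across all non-semisimple classes; for this single $q$ I would fall back on a direct check with \cite{Chevie} (and, if necessary, \cite{GAP}), using the explicit character values of $\tw2 F_4(8)$ to verify $x^G \cdot y^G = G \setminus \{1\}$.
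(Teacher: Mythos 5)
Your overall strategy is the same as the paper's — find regular semisimple elements in two cyclic maximal tori whose orders are coprime-enough, show any character nonvanishing at both is unipotent, and bound the resulting character ratios — but there are two concrete problems.

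First, a factual error about the tori. In $\tw2 F_4(q)$ with $q = 2^f$ (and $f$ odd), the value $q^4 - q^2 + 1 = \Phi_{12}(q)$ is \emph{not} the order of a maximal torus. The Ree twist splits it: $q^4-q^2+1 = (q^2 + \sqrt{2q^3} + q + \sqrt{2q}+1)(q^2 - \sqrt{2q^3} + q - \sqrt{2q}+1)$, and the cyclic maximal tori in question have these two orders (these are the factors of $\Phi_{24}$ in the twisted sense). The paper cites \cite[Lemma 2.3]{MT} and records precisely $|C_G(x)| = (q^2+q+1)+\eps\sqrt{2q}(q+1)$, i.e. one of those two factors, with $x$ of type $t_{17}$ or $t_{16}$ in Malle's notation. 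Your coprimality-to-weak-orthogonality argument can be salvaged (each factor still divides $q^4-q^2+1$, so your gcd computation with $q^2-q+1$ still applies), but as written the identification of $T_1$ is wrong and needs to be corrected before the rest of the argument is meaningful. Incidentally, the paper bypasses weak orthogonality here altogether: since $\chi(x)\neq 0$ forbids $\chi$ from having $r$-defect zero, $r$ must divide $|C_G(t)|$ for the semisimple label $t$ of $\chi$'s Lusztig series, and likewise $s\mid|C_G(t)|$; this already forces $t=1$, which is a cleaner route to unipotence than invoking \cite[Proposition 2.2.2]{LST1}.

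Second, the endgame is left incomplete. You do not actually identify the surviving unipotent characters or verify their values at $x$ and $y$, and the "generic" estimate $|\chi(g)|^2\le|C_G(g)|\le|G|/q^c$ is not carried to a numerical conclusion; you hedge that $q=8$ might fail. In fact the situation is better than you fear: the paper determines from \cite{M1} that exactly $1_G$, $\St$, and the two characters $\chi_{19},\chi_{20}$ of degree $q^2(q^4-1)^2/3$ survive, with $\chi_{19}(x)=\chi_{20}(x)=1$ and $\chi_{19}(y)=\chi_{20}(y)=-1$, and Malle's tables give $|\chi_{19}(g)+\chi_{20}(g)| \le 2q^2(q^4-1)/3$ for non-semisimple $g$, so the total ratio contribution is at most $2/(q^4-1) < 0.1$ uniformly for $q\ge 8$; no separate treatment of $q=8$ is needed. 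Your proof as written does not reach a verified positive lower bound on the character sum, so it is not yet a complete argument.
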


\begin{proof}
The existence of regular semisimple elements $x \in G$ of order $r$ 
and $y \in G$ of order $s$ is proved in \cite[Lemma 2.3]{MT}. In particular,
$|C_G(x)| = (q^2+q+1) + \eps\sqrt{2q}(q+1)$ for some $\eps = \pm$;
moreover, in the notation of \cite{M1}, $x$ is of type $t_{17}$ if $\eps = +$
and of type $t_{16}$ if $\eps = -$, whereas $y$ is of type $t_{15}$. 
Suppose now that $\chi \in \Irr(G)$ is nonzero at both $x$ and $y$, and
$\chi$ belongs to the Lusztig series $\EC(G,(t))$ labeled by the semisimple
element $t \in G^* \cong G$. Since $\chi(x) \neq 0$, $\chi$ cannot have 
$r$-defect zero, and so $|C_G(t)|$ is divisible by $r$. Similarly,
$|C_G(t)|$ is divisible by $s$. These condition imply that $t = 1$, i.e.
$\chi$ is unipotent. The values of unipotent characters of $G$ are determined
in \cite{M1}. An inspection of these values reveals that there are precisely
four possibilities for $\chi$: $1_G$, $\St$, and two more characters
of degree $q^2(q^4-1)^2/3$, labeled
by $\chi_{19}$ and $\chi_{20}$ in \cite{M1}. Moreover, 
$$\chi_{19}(x) = \chi_{20}(x) = 1,~~\chi_{19}(y) = \chi_{20}(y) = -1.$$ 

Now let $g \in G$ be any non-trivial element. As mentioned above, 
$g \in x^G \cdot y^G$ if $g$ is semisimple. If $g$ is not semisimple, then 
using \cite{M1} we see that
$$\left|\sum_{1_G \neq \chi \in \Irr(G)}
  \frac{\chi(x)\chi(y)\overline{\chi}(g)}{\chi(1)}\right| 
  = \left|\frac{\chi_{19}(g)+\chi_{20}(g)}{\chi_{19}(1)}\right|
  \leq \frac{2q^2(q^4-1)/3}{q^2(q^4-1)^2/3} = \frac{2}{q^4-1} < 0.1,$$
whence (\ref{count}) holds, and so we are done.  
\end{proof}

We have completed the proof of Theorem \ref{main1} and Corollary \ref{simple1}.

\subsection{Proof of Corollary \ref{simple2}}
In view of previous results, it remains to prove Corollary \ref{simple2} 
for alternating and sporadic simple groups. For these groups, the statement follows from

\begin{lem}\label{alt} 
{\sl Let $S$ be an alternating or sporadic finite simple group.
Then there is an element $x \in S$ of prime order $r$ such that 
$$x^S \cdot (x^{-1})^S = x^S \cdot x^S \cdot x^S = S.$$ 
Moreover, if $S$ is sporadic, then $r$ can
be chosen to be the largest prime divisor of $|S|$.}
\end{lem}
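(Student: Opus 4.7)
The plan is to deduce both equalities from the Frobenius class multiplication formulas
\begin{equation*}
\#\{(a,b) \in x^S \times (x^{-1})^S : ab = g\} = \frac{|x^S|^2}{|S|}\sum_{\chi \in \Irr(S)} \frac{|\chi(x)|^2\,\overline{\chi}(g)}{\chi(1)}
\end{equation*}
and its three-factor analogue with $\chi(x)^3/\chi(1)^2$. The element $g$ lies in the target product set if and only if the corresponding sum is strictly positive; since the principal character always contributes $1$, it suffices to show that the remaining terms sum to an absolute value strictly less than $1$ for every $g \in S$.

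For the sporadic simple groups I would take $r$ to be the largest prime divisor of $|S|$ and $x$ any element of order $r$. For all $26$ sporadic groups, $r$ divides $|S|$ to the first power, so $C_S(x) = \langle x\rangle$ and $|\chi(x)|^2 \leq r$ for every $\chi \in \Irr(S)$ by column orthogonality. The verification of positivity of the two Frobenius sums is then a finite computation, performed separately for each sporadic group with GAP, using the stored character tables from the ATLAS; because $|x^S| = |S|/r$ is as large as possible and the bound $|\chi(x)| \leq \sqrt{r}$ is uniform, the non-principal contribution is immediately seen to be small in practice.

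For $S = A_n$ with $n \geq 5$, I would invoke Bertrand's postulate to choose a prime $r$ with $n/2 < r \leq n$ and take $x \in A_n$ to be an $r$-cycle (even since $r \geq 3$ is odd). The Murnaghan--Nakayama rule expresses $\chi_\lambda(x)$ for $\lambda \vdash n$ as a signed sum over the $r$-hooks of $\lambda$ of character degrees $f^\mu$ in $S_{n-r}$; since $n - r < r$, both the number of such hooks and the factors $f^\mu$ are tightly controlled, yielding strong upper bounds on $|\chi(x)|$ which persist after restriction from $S_n$ to $A_n$ up to a factor $2$. Combined with a Larsen--Shalev-type upper bound on $|\chi(g)|/\chi(1)$ at an arbitrary $g \in A_n$ and the hook-length lower bound on $\chi_\lambda(1)$, the non-principal contribution to both Frobenius sums can be forced below $1$ in absolute value once $n$ exceeds an absolute constant $n_0$; the finitely many cases $5 \leq n \leq n_0$ are verified with GAP, where the prime $r$ equal to the largest prime in $\{3,\ldots,n\}$ suffices in every instance.

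The main obstacle lies in obtaining a clean \emph{uniform} bound on the non-principal character sum that is valid for every $g \in S$, including involutions and other small-order elements with unusually large centralizers, where character ratios $|\chi(g)|/\chi(1)$ need not be small at all. For the sporadic groups this is purely bookkeeping and is dissolved by direct GAP computation. For $A_n$ with large $n$ the delicate point is calibrating the Murnaghan--Nakayama control of $\chi_\lambda(x)$ at the $r$-cycle against a uniform estimate of $|\chi_\lambda(g)|/\chi_\lambda(1)$ at arbitrary $g$; this is the step that demands the Larsen--Shalev bound (or an equivalent explicit hook-length analysis) and where essentially all of the technical work will concentrate.
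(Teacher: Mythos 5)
Your sporadic-group argument is essentially the same as the paper's (a finite GAP verification on ATLAS character tables), modulo a small gap: from $r^2 \nmid |S|$ alone one only gets that a Sylow $r$-subgroup is $\langle x\rangle$, not that $C_S(x) = \langle x\rangle$; the latter must still be read off from the ATLAS case-by-case, though it does hold. The real divergence is in the alternating case, where you propose a character-theoretic estimate while the paper uses a short combinatorial argument: it cites Bertram's theorem that every element of $A_n$ is a product of two $r$-cycles whenever $r \geq \lfloor 3n/4 \rfloor$, selects a \emph{prime} $r$ with $\lfloor 3n/4 \rfloor \leq r \leq n-2$ (possible for $n \geq 33$ and checked directly for smaller $n$, with $n = 6,8,11,12$ handled in GAP), and then notes that for $r \leq n-2$ the $r$-cycles form a single real class, so $x$ is real and $x^S\cdot(x^{-1})^S = x^S\cdot x^S = S$ forces $x^S\cdot x^S\cdot x^S = S$ as well.

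Your alternating-group sketch has a genuine gap, not just a different flavour. The decisive claim --- that a Murnaghan--Nakayama bound on $|\chi(x)|$ at an $r$-cycle with $n/2 < r \leq n$, combined with a Larsen--Shalev bound on $|\chi(g)|/\chi(1)$, forces the non-principal Frobenius sum below $1$ for \emph{every} $g$ once $n$ exceeds an absolute constant --- is asserted rather than proved, and it is exactly where the difficulty lies; you acknowledge this yourself. More concretely, three issues would need to be resolved. First, with only $n/2 < r \leq n$ you do not exclude $r \in \{n-1, n\}$, in which range the $r$-cycles split into two $A_n$-classes, $x$ need not be real, and the split characters take irrational (complex) values of size comparable to $\sqrt{f^\lambda}$ at $x$, breaking the intended bounds and decoupling $x^S\cdot(x^{-1})^S$ from $x^S\cdot x^S\cdot x^S$. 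Second, the three-factor Frobenius count involves $\chi(x)^3$, which is complex when $x$ is not real, so the principal-term-plus-error bound is delicate precisely in the cases the argument does not rule out. Third, for elements $g$ with very large centralizers (transpositions squared into $A_n$, $3$-cycles, etc.) the ratios $|\chi(g)|/\chi(1)$ are not uniformly small, and obtaining an \emph{explicit} threshold $n_0$ rather than an asymptotic one is nontrivial. Restricting to $r \leq n-2$ and invoking Bertram, as the paper does, dissolves all three difficulties at once.
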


\begin{proof}
Note that if $x^S \cdot (x^{-1})^S = S$, and $x$ is real or 
$x^{-1} \in x^S \cdot x^S$, then 
$$S = x^S \cdot (x^{-1})^S \subseteq x^S \cdot x^S \cdot x^S.$$
Assume that $S = \AAA_n$ with $n \geq 5$.  
By the main result of \cite{B}, every $g \in S$ is 
a product of two $r$-cycles if $r \geq \lfloor 3n/4 \rfloor$, Moreover, if $r \leq n-2$ then
the $r$-cycles form a unique $\AAA_n$-class and they are all real. 
Hence we are done if the interval
$[\lfloor 3n/4 \rfloor,n-2]$ contains a prime. The latter claim 
holds for $n \geq 33$, since then $(5(n-2)/6,n-2)$ contains a prime. 
It also holds for $n \geq 5$ but $n \neq 6,8,11,12$ by 
direct inspection. In the cases $n = 6,8,11,12$, a direct computation 
using \cite{GAP} shows that 
\begin{equation}\label{choice}
  x^S \cdot (x^{-1})^S = S,~~~x^{-1} \in x^S \cdot x^S
\end{equation}
if we can choose $x$ of order $5$, $7$, $11$, and $11$, respectively. 
 
If $S$ is a sporadic group and $x \in S$ is an element of largest 
prime order, then (\ref{choice}) can be verified directly using \cite{GAP}.
\end{proof}     

\section{The Waring problem for powers}
\subsection{Proof of Theorem \ref{main2}}
Note that the statement is obvious if $k$ and $l$ are coprime. So we 
will assume that $\gcd(k,l) > 1$. Now if $m \leq 5$, then the latter 
condition implies that $m$ is also equal to ${\mathrm {lcm}}(k,l)$ and
that $m$ is a prime power. In this case, the statement follows from 
\cite[Corollary 1.5]{GM}, which says that every element in any finite 
non-abelian simple group is a product of two $m^{\mathrm {th}}$-powers,
provided that $m$ is a prime power. 

From now we will assume that $m \geq 6$. In particular, $m^{8m^2} > 10^{223}$,
and so we can ignore all the sporadic simple groups. Suppose that 
$S \cong \AAA_n$; in particular, $n > \max(4m,200)$. Under this assumption,
we can find a prime $p$ such that $5n/6 < p < n$ (see e.g. \cite{R});
in particular, $p > m$. By the main result of \cite{B}, every 
$g \in \AAA_n$ is a product of two $p$-cycles, whence it is a product of 
a $k^{\mathrm {th}}$-power and an $l^{\mathrm {th}}$-power.

Thus we may now assume that $S$ is a simple group of Lie type of order
$>10^{223}$ and   
view $S = G/Z(G)$, where $G = \GC(\FF_q) = \GCF$ as in Theorem \ref{main1}
and $q = p^f$. It suffices 
to show that every element $g \in G \setminus Z(G)$ is a product of two 
elements of orders coprime to both $k$ and $l$ in $G$. If the characteristic
$p$ of $G$ is larger than $m$, then the statement follows from 
\cite[Corollary, p. 3661]{EG}. So we may assume that $p \leq m$. Let 
$d$ denote the rank of the algebraic group $\GC$. By  
Theorem \ref{main1} and its proof, there exist primes $r,s_1,s_2$ such that 
$g = xy$ for some $r$-element $x \in G$ and $\{s_1,s_2\}$-element $y \in G$; 
moreover, $t > df/2$ if $G$ is classical and $t > df$ if $G$ is exceptional
for $t:=\min(r,s_1,s_2)$. Certainly, we are done if $t > m$. Suppose
that $t \leq m$. If $G$ is classical, then $df \leq 2t-1 \leq 2m-1$, and 
$$|S| < q^{d(2d+1)} \leq p^{df(2df+1)} < m^{(2m-1)(4m-1)} < m^{8m^2}.$$
If $G$ is exceptional, then $df < t \leq m$, and 
$$|S| < q^{31d} \leq p^{31df} < m^{31m} < m^{8m^2}$$
(since $m \geq 6$), completing the proof of Theorem \ref{main2}.
    
\smallskip    
The above proof of Theorem \ref{main2} also yields the following statement:

\begin{cor}\label{large}
{\sl {\rm (i)} Let $S$ be a finite non-abelian simple group. 
Then there exist primes $r,s_1,s_2$ such that 
every non-trivial element $g \in S$ is a product of an $r$-element $x \in S$ 
and an $\{s_1,s_2\}$-element $y \in S$. Moreover, the primes
$r$, $s_1$, and $s_2$ can be chosen to be arbitrarily large if $|S|$ is 
large enough.

\smallskip
{\rm (ii)} Let $\GC$ be a simple simply connected algebraic group in positive 
characteristic and let $F~:~\GC \to \GC$ be a generalized Frobenius endomorphism such that 
$G := \GC^F$ is quasisimple. Then there exist primes $r,s_1,s_2$ such that 
every non-central element $g \in G$ is a product of an $r$-element $x \in G$ 
and an $\{s_1,s_2\}$-element $y \in G$. Moreover, the primes
$r$, $s_1$, and $s_2$ can be chosen to be arbitrarily large if $|G|$ is 
large enough.}
\end{cor}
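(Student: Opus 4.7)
The existence portion of (i) and (ii) is essentially a direct restatement of what Theorem \ref{main1} and Lemma \ref{alt} already give. For (ii), Theorem \ref{main1} produces $x$ of prime order $r$ and $y$ an $\{s_1,s_2\}$-element with $x^G\cdot y^G\supseteq G\setminus Z(G)$, so every non-central $g\in G$ is a product of some conjugate of $x$ (an $r$-element) and some conjugate of $y$ (an $\{s_1,s_2\}$-element). Projecting to $G/Z(G)$ then handles all simple groups of Lie type in (i), while Lemma \ref{alt} with $s_1=s_2=r$ covers the alternating and sporadic cases.

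The real content is the asymptotic claim. The plan is to revisit the case analysis in the proof of Theorem \ref{main1} and check that, outside a finite list of explicit exceptions, each of $r,s_1,s_2$ is selected as a primitive prime divisor $\ppd(p,k)$ with $k$ a positive multiple of $f$ tied to the type of $G$ (typically $nf$, $(n-1)f$, $2nf$, $2(n-1)f$, $8f$, $9f$, $12f$, $18f$, $20f$, $24f$, and so on). Since any such prime satisfies $\ppd(p,k)\equiv 1\pmod{k}$ and hence $\ppd(p,k)\geq k+1$, all three primes tend to $\infty$ as $k\to\infty$. Combining this with the bound $\min(r,s_1,s_2)>df/2$ (or $>df$ for exceptional types) recorded in the proof of Theorem \ref{main2}, together with the analogous lower bound for $\max(r,s_1,s_2)$, one obtains that each of $r,s_1,s_2$ grows without bound with $df$. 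Since $|G|<p^{O(d^2 f)}$, large $|G|$ forces $df$ to be large (the rank $d$, the field exponent $f$, and the characteristic $p$ cannot all remain bounded), which establishes the claim for Lie-type $G$. For $\AAA_n$ the proof of Lemma \ref{alt} works with any prime $r\in[\lfloor 3n/4\rfloor,n-2]$, which by Bertrand-type estimates grows like $n$; the sporadic groups form a finite list and are absorbed into the hypothesis ``$|S|$ sufficiently large.''

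The main (albeit routine) obstacle is the bookkeeping around the small exceptions singled out in the proof of Theorem \ref{main1}: cases such as $SL_2(q)$ with $q$ a small Fermat or Mersenne prime, $SU_3(2)$, $Sp_4(3)$, $\Omega_8^+(2)$, $Spin_9(3)$, and a handful of other low-rank or small-$q$ classical groups, where the primes were chosen by direct computation rather than via the primitive prime divisor recipe. Each such exceptional group has bounded order, so none obstructs the asymptotic claim; one only has to verify that once $|S|$ (respectively $|G|$) exceeds an explicit bound, only the generic construction via $\ppd(p,k)$ is in force, after which the lower bounds above apply uniformly to yield primes larger than any prescribed $M$.
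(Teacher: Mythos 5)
Your outline mirrors the paper's approach — the paper obtains this corollary as a by-product of its proof of Theorem \ref{main2} — but there is a genuine gap in the asymptotic step for Lie-type groups. The assertion that ``large $|G|$ forces $df$ to be large'' is false: take $G = SL_2(p)$ with $p$ an enormous prime, so $|G|$ is arbitrarily large while $d = f = 1$. In general $|G|$ large only forces at least one of $p$, $d$, $f$ to be large, which your parenthetical acknowledges but your conclusion ignores. When $d$ and $f$ stay bounded and only $p$ grows, the primitive-prime-divisor choices in the proof of Theorem \ref{main1} (and hence the lower bound $\min(r,s_1,s_2) > df/2$, resp.\ $> df$) give you nothing: those primes need not exceed any prescribed $M$. (Also, the appeal to an ``analogous lower bound for $\max(r,s_1,s_2)$'' is redundant: the bound on $\min$ already gives the same bound on all three.)

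In the regime $p$ large, $df$ bounded, the paper's proof of Theorem \ref{main2} switches to the Ellers--Gordeev result \cite[Corollary, p.~3661]{EG}: every non-central element of $G$ is a product of two unipotent elements, i.e.\ two $p$-elements. Thus one may take $r = s_1 = s_2 = p$, which is large in this case. Adding this alternative to your argument — either $df$ is large, in which case the $\ppd$-primes from Theorem \ref{main1} are large, or $df$ is bounded, in which case $p$ must be large and one uses \cite{EG} — closes the gap. Your treatment of the alternating case (any prime in $[\lfloor 3n/4\rfloor, n-2]$ grows with $n$) and the sporadic case (finite list, vacuous asymptotics) is fine, as is the reduction of part (i) to part (ii) for Lie-type $S$ via $S = G/Z(G)$.
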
 

\subsection{Further results on the width}
Recall that the main result of \cite{LST1} establishes {\it width $2$} for 
arbitrary non-trivial word maps on {\it sufficiently large} finite simple groups
$S$. What happens if one removes the condition on the order 
of $S$?

It has been shown in 
\cite{KN} that the width can grow unbounded even when $w(S) \neq \{1\}$;
namely, given any $N$, there is a word $w$ in the free group on two generators
and a finite simple group $S$ such that $w(S) \neq \{1\}$ but yet $w(S)^N \neq S$. 

The next two examples show that the same is true for powers.  

\begin{exa}\label{unb1}
{\em Let $p$ be any prime, and let $S = PSL_{p^a+1}(p^b)$ for some 
integers $a, b \geq 1$. Set $w(x) = x^k$ with $k := \exp(S)/p$. 
We claim that {\sl $w(S)$ consists 
of the identity and all transvections in $S$, and, consequently,
$w(S)^{p^a} \neq S$}. Indeed, note that the $p$-part $k_p$ of $k$ is $p^a$. 
Considering the Jordan decomposition $g = su$ for any $g \in S$, we see that 
$g^k = (su)^k = u^k$
is non-trivial precisely when $u$ is a Jordan block of size $p^a+1$, in which 
case $g^k=u^k$ is a transvection. Now if $h = g_1g_2 \ldots g_{p^a}$ and 
$g_i \in w(S)$, then, after lifting $g_i$ to a $p$-element 
$\hat{g}_i \in G = SL_{p^a+1}(p^b) = SL(V)$, we have that 
$\codim\ C_V(\hat{g}_i) \leq 1$. It follows that 
$\codim\ C_V(\hat{h}) \leq p^a$, i.e. $C_V(\hat{h}) \neq 0$ for  
$\hat{h} := \prod^{p^a}_{i=1}\hat{g}_i$. Hence $w(S)^{p^a} \neq S$.}
\end{exa} 

Of course similar examples hold for other classical groups (in characteristic 
$p$). We offer an example in cross characteristic as well:

\begin{exa}\label{unb2}
{\em Let $p$ be any prime, $a \geq 1$, and let $S = PSL_{p^a+1}(q)$ for some 
prime power $q$ such that $p|(q-1)$. For simplicity, assume in addition that
$p > 2$ and $(q-1)_p = p$. Again set $w(x) = x^k$ with $k := \exp(S)/p$. We claim 
that {\sl $w(S) \neq \{1\}$ and consists of the identity and some scalar 
multiples of pseudoreflections in $S$; furthermore,
$w(S)^{p^a} \neq S$}. To see this, we work in $G = SL_{p^a+1}(q) = SL(V)$ and 
again note that the $p$-part of $k$ is $p^a$. For any $g \in G$, we see that 
$g^k$ is non-trivial precisely when the $p$-part of $g$ is conjugate (over
$\overline{\FF}_q$) to
$$\diag(\lam,\lam^q, \ldots ,\lam^{q^{p^a-1}},\lam^{\frac{q^{p^a-1}}{1-q}}),$$
where $\lam \in \FF_{q^{p^a}}^{\times}$ has order $p^{a+1}$. Hence, $g^k$ is either  
$1$ or a pseudoreflection up to scalar. Now if $h = g_1g_2 \ldots g_{p^a}$ and 
$g_i \in w(G)$, then we have that 
$\codim\ \Ker(g_i-\lam_i \cdot 1_V) \leq 1$ for some $\lam_i \in \FF_q^{\times}$. 
It follows that 
$\codim\ \Ker(h-\mu \cdot 1_V) \leq p^a$, i.e. $\Ker(h - \mu\cdot 1_V) \neq 0$ 
for  $\mu := \prod^{p^a}_{i=1}\lam_i \in \FF_q^{\times}$. Hence $w(S)^{p^a} \neq S$.}
\end{exa}

Kassabov and Nikolov  \cite{KN}  gave more complicated examples with words that
were not powers (including an example for alternating groups).  

We next show that there are no such examples for alternating groups using powers. 

\begin{lem}  \label{lem:altinv}  
{\sl Let $S = \AAA_n$ with $n \ge 5$, $\ell > 1$
an integer, and let $X$ be the subset of $S$ consisting of all
elements whose non-trivial orbits all have size $\ell$.
\begin{enumerate}
\item   If $\ell = 2$,  $X^3 = S$.
\item  If $\ell$ is odd, $X^4 = S$.
\end{enumerate}}
\end{lem}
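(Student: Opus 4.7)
For both parts the strategy combines classical cycle factorizations in $\SSS_n$ with small parity corrections to stay inside $\AAA_n$. Throughout I identify $X$ concretely: for $\ell=2$, $X$ is the identity together with the involutions of $\AAA_n$ (permutations whose non-trivial cycles are all $2$-cycles, necessarily an even number of them), and for $\ell\geq 3$ odd, $X$ is the identity together with all products of disjoint $\ell$-cycles.

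For~(1), the starting point is the classical fact that every permutation of $\SSS_n$ is a product of two involutions of $\SSS_n$, obtained by the standard cycle-by-cycle ``reverse-pair'' construction. Applied to $g\in\AAA_n$ this yields $g=\iota_1\iota_2$ with involutions $\iota_1,\iota_2\in\SSS_n$, necessarily of equal sign since $g$ is even. If both lie in $\AAA_n$, then $g\in X^2\subseteq X^3$ and we are done. Otherwise both are odd, and the plan is to choose a transposition $\rho=(c,d)$ commuting with both $\iota_i$---most naturally $(c,d)$ a pair of common fixed points of $\iota_1$ and $\iota_2$---so that $\iota_1\rho$ and $\rho\iota_2$ are involutions in $\AAA_n$, giving $g=(\iota_1\rho)(\rho\iota_2)\in X^2$. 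The delicate step is the boundary case where $\iota_1,\iota_2$ together leave fewer than two common fixed points; there a small direct modification (peeling off a single $2$-cycle from $\iota_1$ and rerouting it through $\iota_2$) is used, and this is where the third $X$-factor becomes necessary. For $n\ge 5$ enough room is always available for this correction.

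For~(2), the plan has two ingredients. First, I would show that every $3$-cycle of $\AAA_n$ belongs to $X^2$ via an explicit factorization as a product of two $\ell$-cycles, possible whenever $n\geq\ell$: for example $(1,2,3)=(1,3,4,5,2)(1,5,4,3,2)$ when $\ell=5$, and analogous identities hold for every odd $\ell\geq3$. Carrying out this construction on disjoint blocks of $\ell$ points also places any product of at most $\lfloor n/\ell\rfloor$ disjoint $3$-cycles into $X^2$. The second ingredient is to write an arbitrary $g\in\AAA_n$ as a product of two such ``$3$-cycle packages.'' I would do this either by invoking a classical decomposition theorem for $\AAA_n$ in the spirit of Boccara's and Brenner's results on cycle-type products, or by a direct cycle-by-cycle rewriting of $g$ that balances cycle lengths across the two packages. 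Combining the two ingredients gives $g\in(X^2)^2=X^4$.

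The main obstacle is the second ingredient of~(2) in the regime where $\ell$ is close to $n$ and little room for disjoint $3$-cycle packages exists; in that regime I would instead exploit that the conjugacy class of a maximal product of disjoint $\ell$-cycles is already relatively large inside $\AAA_n$, so that $X^2$ covers $\AAA_n\setminus\{1\}$ directly by a character-theoretic or elementary counting argument and the fourth factor is unnecessary.
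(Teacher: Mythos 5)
Your treatment of part (1) matches the paper's: both start from the classical fact that every permutation is a product of two involutions, observe that for $g\in\AAA_n$ the two factors have the same parity, and correct the odd case by inserting a suitable transposition, using the third $X$-factor as a safety valve. The paper simply says this "follows trivially"; your version is more explicit but the idea is the same, and it is sound.

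Your part (2) is a genuinely different route from the paper's, and it has a real gap. The paper proves, by induction on odd $n$, that an $n$-cycle is a product of two elements of $X$, one of which has a fixed point (replacing $n$ by $n-1$ when $n$ is even); it then invokes Bertram's theorem that every even permutation is a product of two $n$-cycles (resp.\ $(n-1)$-cycles), giving $S = C^2 \subseteq (X^2)^2 = X^4$ where $C$ is that cycle class. Your plan instead tries to put short-support elements into $X^2$ and build up: show each $3$-cycle, and hence any product of at most $\lfloor n/\ell\rfloor$ disjoint $3$-cycles, lies in $X^2$, then write an arbitrary $g$ as a product of two such packages. The first step is fine. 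The second step, however, is impossible on support grounds as soon as $\ell\ge 7$: each package moves at most $3\lfloor n/\ell\rfloor$ points, so the product moves at most $6\lfloor n/\ell\rfloor < n$ points, yet $g$ may be an $n$-cycle. You flag the obstacle as occurring only when "$\ell$ is close to $n$," but in fact it arises for essentially all odd $\ell\ge 7$, which is most of the range. The fallback you sketch (a counting or character-theoretic argument that $X^2\supseteq \AAA_n\setminus\{1\}$ when $\ell$ is near $n$) is plausible in the extreme case $\ell\in\{n,n-1,n-2\}$ but is neither proved nor correctly delimited, and it does not cover the intermediate regime where the packaging argument already fails. To repair this you would need precisely the device the paper uses: show a \emph{single long cycle} (of length $n$ or $n-1$) lies in $X^2$, rather than packages of $3$-cycles, and then use a covering result of Bertram/Boccara type for that cycle class.
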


\begin{proof}  
The first statement follows trivially from the elementary fact that
every element of  the symmetric group  is a product of two involutions.  

Now assume that $\ell$ is odd.  We will show that $X^2$ contains either an
$n$-cycle or $(n-1)$-cycle (depending upon whether $n$ is odd or even).
We may assume that $n$ is odd (replacing $n$ by $n-1$ if necessary).
Write $n= k \ell + r$ where $0 \le r < \ell$.  

We actually prove a slightly stronger statement by induction on $n$ (assuming
$n$ is odd).   An $n$-cycle can be written as a product of two elements of
$X$ one of which fixes a point (and so any specified point).
If $n=5$, this is clear and more generally if $n = \ell$, this is clear.

Let $x = (1,2, \ldots,n)$ and let $y = (n,n -1,\ldots ,n - \ell + 1)$.   Note
that $x y = (1,2, \ldots  ,n - \ell + 1 )$.  By induction,  $xy = uv $ is a product of
two elements of $X \cap H$ where $H$ is the subgroup of $S$ fixing
$\{n - \ell +2, \ldots, n\}$ and moreover, we may assume that 
$v$ fixes $n - \ell+1$.  Thus, $x = u (vy^{-1})  \in X^2$ and $u$ fixes a point
(indeed at least $\ell-1$ points). Applying this argument to get such an expression
$x^{-1} = u_1v_1$, we see that $x = v_1^{-1}u_1^{-1}$  with $u_1,v_1 \in X$ and 
$u_1^{-1}$ fixing a point.  

It follows by \cite{B} that $X^4 = S$.
\end{proof}

We can now show there is a small universal bound for products of powers
covering in alternating groups (as long as not every power is trivial). 

\begin{thm}  \label{thm:alt}  
{\sl Let  $k$ be a positive integer and let $S=\AAA_n, n \ge 5$.
Assume that $k$ is not a multiple of the exponent $e$ of $S$.
Then every element of $S$ is a product of $8$ $k^{\mathrm {th}}$  powers.}
\end{thm}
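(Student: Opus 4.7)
The plan is to identify a rich family of explicit $k^{\mathrm{th}}$ powers in $S=\AAA_n$ and then invoke Lemma \ref{lem:altinv}, with a short case split depending on which primes divide $k$.

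First I would record the elementary characterization of $k^{\mathrm{th}}$ powers by cycle type. If $h\in\AAA_n$ has every cycle length coprime to $k$, then $h\in S^k$: on each cycle $C$ of length $\lambda$ in $h$, pick $k'$ with $kk'\equiv 1\pmod{\lambda}$ and replace $C$ by $C^{k'}$; the resulting $g$ has the same cycle type (so the same parity) as $h$, and $g^k=h$. By hypothesis $k$ is not a multiple of $e:=\exp(\AAA_n)$, so there is a prime $p$ with $b:=v_p(k)<a:=v_p(e)$, and in particular $p^{b+1}\le n$. A second useful observation is: if $h$ is a product of $m$ disjoint $p$-cycles in $\AAA_n$ with $p^b\mid m$, then $h\in S^k$. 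Indeed, any $p^b$ disjoint $p$-cycles on a $p^{b+1}$-point set are the orbits of $g_0^{p^b}$ for some $p^{b+1}$-cycle $g_0$, hence $g_0^k$ realizes them; grouping the $m$ $p$-cycles of $h$ into $m/p^b$ blocks of $p^b$ cycles and concatenating the corresponding $p^{b+1}$-cycles produces $g\in\AAA_n$ with $g^k=h$ (parity is automatic because $p^{b+1}$-cycles are even for odd $p$; for $p=2$ the blocks come in pairs).

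Next I would split into two cases. \emph{Case A: some prime $\ell\le n$ does not divide $k$.} Then every element of $\AAA_n$ whose non-trivial orbits all have length $\ell$ is in $S^k$ by the first observation, and Lemma \ref{lem:altinv} gives $(S^k)^3=S$ if $\ell=2$ and $(S^k)^4=S$ if $\ell$ is odd; either way $(S^k)^8=S$. \emph{Case B: every prime $\le n$ divides $k$.} Then the prime $p$ above has $b\ge 1$, and the second observation produces many $k^{\mathrm{th}}$ powers of the form ``products of $p^b r$ disjoint $p$-cycles''. The strategy here is to imitate the proof of Lemma \ref{lem:altinv}(2): show that a short product of such structured $k^{\mathrm{th}}$ powers contains an $n$-cycle or $(n-1)$-cycle, then appeal to \cite{B} to write any element of $\AAA_n$ as a product of two such long cycles. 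One gets extra flexibility by also including $g$'s with cycles of length dividing $k$ (these contribute fixed points to $g^k$, allowing free control of the support), and by varying the $p^{b+1}$-cycle decomposition so that products of two $p^b$-divisible configurations realize cycle counts outside $p^b\mathbb{Z}$ (a small example with $p=3,b=1$ already shows one can produce four $3$-cycles from two sets of three $3$-cycles); iterating, the full set $X$ of Lemma \ref{lem:altinv} is recovered inside a bounded product of $S^k$.

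The main obstacle is Case B: the $p^b$-divisibility constraint on the number of $p$-cycles obstructs a direct application of Lemma \ref{lem:altinv}(2), and one must show that this constraint is wiped out after taking only a few products. The buffer between $4$ (the bound Lemma \ref{lem:altinv} would give in the unrestricted setting) and $8$ (the bound claimed in the theorem) is exactly what allows room for this intermediate step of ``correcting'' the divisibility defect; each element of $X$ should be expressible as a product of two elements of $S^k$, giving $S=X^4\subseteq (S^k)^8$, and hence the bound.
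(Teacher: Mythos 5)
Your argument follows essentially the paper's approach: pick a prime $p$ whose $p$-part in $\exp(S)$ exceeds its $p$-part in $k$, exhibit a family of $k^{\mathrm{th}}$ powers that are products of disjoint $p$-cycles, show that the square of this family contains the set $X$ of all elements whose nontrivial orbits have length $p$, and finish via Lemma \ref{lem:altinv}. In the paper this is done in one stroke: with $p^{a+1}\,\|\,e$ and $n=sp^{a+1}+r$, $0\le r<p^{a+1}$, the set $Y$ is taken to be products of \emph{exactly} $sp^{a}$ disjoint $p$-cycles; these are $k^{\mathrm{th}}$ powers, and it is then asserted that $Y^2\supseteq X$, giving $Y^8=S$ ($Y^6=S$ if $p=2$). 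Three comments on your version. First, your Case A is an unnecessary detour: the paper's choice of $Y$ handles both cases uniformly (when $b=v_p(k)=0$ the containment $Y^2\supseteq X$ is all that is needed, and your Case A gains nothing). Second, your "second observation" has a parity slip for $p=2$: each concatenated $2^{b+1}$-cycle is an odd permutation, and there are $m/2^{b}$ of them, a number that need not be even (e.g.\ $m=2$, $b=1$ yields a single $4$-cycle); you must either pad with a disjoint transposition (killed by $(\cdot)^k$ since $2\mid k$) or regroup the $p$-cycles unevenly, and either fix requires spare points outside $\operatorname{supp}(h)$ and should be stated. Third, you correctly isolate the crux in Case B, namely $X\subseteq(S^k)^2$, and arrive at the same bound $X^4=S\subseteq(S^k)^8$; note though that the paper also leaves this containment as ``It is straightforward to see that $Y^2$ contains\dots'', so your treatment is roughly on par with the paper's at that point — but a self-contained proof would still need to establish it.
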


\begin{proof} 
Let $p$ be a prime dividing $e/\gcd(e,k)$ and  let $p^{a+1}$
be the largest power of $p$ dividing $e$.    Write $n = sp^{a+1} + r$
with $0 \leq r < p^{a+1}$.   Then any element which is a product of
$sp^a$ disjoint $p$-cycles is a $k^{\mathrm {th}}$  power.  Let $Y$ be the
set of such elements.  It is straightforward to see that $Y^2$ contains
the set of all elements of $S$ in which all non-trivial orbits have size $p$.
It follows by the previous result that  $Y^8 = S$  (if $p=2$, $Y^6=S$). 
\end{proof} 

Note that if $n=2^{a+1} - 1, a \ge 2$ and $k=e/2$ where $e$ is the exponent
of $\AAA_n$, then non-trivial  $k^{\mathrm {th}}$  powers are just the involutions
moving exactly $2^a$ points.  One sees that an $n$-cycle is not the product
of $3$ $k^{\mathrm {th}}$  powers. 

\smallskip
We can show that there is a universal bound for the finite simple groups of 
Lie type as well under a slightly stronger hypothesis.
We sketch the proof.  The constant in the next   results is most likely
at most $5$.

We point out two easy observations that we use below.  
In these two statements, by a finite quasisimple group of Lie type
we mean any quotient of the group $\GC^F$ in Theorem \ref{main1} by a central 
subgroup.

\begin{cor} \label{cor:regss}  
{\sl Let $G$ be a finite quasisimple group of Lie type and let $G_{\mathrm {rss}}$ 
be the set of regular semisimple elements in $G$.  
Then $G=(G_{\mathrm {rss}})^2$.}
\end{cor}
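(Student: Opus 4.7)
The plan is to deduce Corollary \ref{cor:regss} directly from Theorem \ref{main1}, covering non-central and central targets by separate arguments. First I would reduce to the case $G = \GC^F$ as in Theorem \ref{main1}, since regular semisimplicity descends through central quotients: maximal tori of $\GC$ map to maximal tori of any central isogenous quotient, and the image of a regular semisimple element is again regular semisimple. Let $x, y \in G$ be the regular semisimple elements furnished by Theorem \ref{main1}, with $x^G \cdot y^G \supseteq G \setminus Z(G)$. This immediately writes every non-central $g \in G$ as a product of two regular semisimple elements, namely conjugates of $x$ and $y$.

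For a central target $z \in Z(G)$, the plan is to factor $z = x \cdot (x^{-1}z)$ using the same $x$ as above, and to verify that $x^{-1}z$ is again regular semisimple. Since $Z(G) \subseteq Z(\GC^F) \subseteq Z(\GC)$ and $Z(\GC)$ is contained in every maximal torus of $\GC$, the element $z$ is semisimple and commutes with every element of $\GC$; in particular it commutes with $x$. Consequently $x^{-1}z$ is semisimple, and since $z$ is centralized by all of $\GC$ one computes $C_{\GC}(x^{-1}z) = C_{\GC}(x)$, which is a maximal torus by the regularity of $x$. Hence $x^{-1}z$ is regular semisimple, completing the proof.

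There is no real obstacle here; the argument is essentially a one-line reduction to Theorem \ref{main1}, with the only mild subtleties being the preservation of regular semisimplicity under central isogeny (used in the initial reduction) and under multiplication by a central element (used for the central-target case). Both facts reduce to the standard observation that $Z(\GC)$ lies inside every maximal torus of $\GC$, so no additional input beyond Theorem \ref{main1} is needed.
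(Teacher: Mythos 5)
Your proof is correct and follows essentially the same route as the paper: the paper likewise covers non-central elements via Theorem \ref{main1} (or \cite{GM}) and handles $z \in Z(G)$ by factoring $z = s \cdot s^{-1}z$ with $s$ regular semisimple, observing that $s^{-1}z$ is again regular semisimple. Your centralizer computation $C_{\GC}(x^{-1}z) = C_{\GC}(x)$ and the remark about reduction to $\GC^F$ simply spell out details the paper leaves implicit.
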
 

\begin{proof}
This is a trivial consequence of \cite{GM} as well as Theorem \ref{main1}, which 
show that any non-central element of $G$ is contained in $(G_{\mathrm {rss}})^2$.
On the other hand, if $z \in Z(G)$ and $s \in G_{\mathrm {rss}}$, then 
$s^{-1}z \in G_{\mathrm {rss}}$ and so $z = s \cdot s^{-1}z \in (G_{\mathrm {rss}})^2$.
\end{proof}

\begin{cor} \label{cor:regss2}  
{\sl Let $G$ be a finite quasisimple group of Lie type and
let $C$ be a conjugacy class of regular semisimple elements in $G$. Then $C^4=G$.}
\end{cor}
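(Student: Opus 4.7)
The plan is to chain together Gow's theorem (cited at the start of Section 2) with Corollary \ref{cor:regss}. Let $g \in G$ be arbitrary; I aim to show $g \in C^4$.

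First, I would apply Corollary \ref{cor:regss} to write $g = s_1 s_2$ with $s_1, s_2 \in G_{\mathrm{rss}}$. The next observation I need is that regular semisimple elements are automatically \emph{non-central}: if $s \in G$ is regular semisimple, then (lifting to the algebraic group if $G$ is a proper quotient of $\GC^F$) its centralizer is a maximal torus, which is a proper subgroup since $\GC$ is simple simply connected and hence not a torus. So $s_1, s_2$ are non-central semisimple elements of $G$.

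Now fix any $x \in C$ (so $C = x^G$). Since $x$ is regular semisimple, Gow's theorem (as cited in the opening of \S2) applied with $y = x$ gives that $x^G \cdot x^G = C \cdot C$ contains every non-central semisimple element of $G$. In particular, $s_1 \in C \cdot C$ and $s_2 \in C \cdot C$, so
\[
  g = s_1 s_2 \in (C \cdot C)(C \cdot C) = C^4.
\]
Since $g$ was arbitrary, this yields $C^4 = G$.

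There is essentially no obstacle here beyond verifying that the two ingredients apply in the quasisimple (not just simple) setting; the only mild subtlety is the remark that regular semisimple forces non-centrality, which is what allows Gow's theorem to cover both factors $s_1, s_2$ rather than leaving a central residue to handle separately. (Had one of the $s_i$ been central, the argument would break, but as noted, this cannot happen.) No computation of character ratios, tori, or specific primes is needed, since all of that work has been absorbed into the previous two corollaries.
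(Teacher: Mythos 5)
Your proof is correct, and it uses the same key ingredients as the paper (Gow's theorem to show $C^2$ contains all non-central semisimple elements, hence all regular semisimple elements). The organization is slightly different: you factor every $g\in G$ as a product of two regular semisimple elements via Corollary~\ref{cor:regss}, whereas the paper argues directly that $C^4 \supseteq x^G\cdot y^G \supseteq G\setminus Z(G)$ by Theorem~\ref{main1}, and then separately dispatches $z\in Z(G)$ by writing $z = s\cdot s^{-1}z$ with both factors regular semisimple. Since Corollary~\ref{cor:regss} itself is proved by exactly that two-case argument, your route just absorbs the case split into the earlier corollary; there is no mathematical divergence, only a cleaner packaging. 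One small point worth noting: you correctly flag that regular semisimple elements are non-central (their centralizers are maximal tori, proper subgroups), which is needed for Gow's result to apply; the paper leaves this implicit.
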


\begin{proof} 
As we have already noted, it follows by \cite{Gow} that 
$C^2$ contains all non-central semisimple elements. In particular,
in the notation of Theorem \ref{main1} we have that 
$$C^4 = C^2 \cdot C^2 \supseteq x^G \cdot y^G \supseteq G \setminus Z(G).$$
Suppose now that $z \in Z(G)$. If $s \in G$ is regular semisimple then so is
$s^{-1}z$, whence $s, s^{-1}z \in C^2$ and $z = s \cdot s^{-1}z \in C^4$ as well.
\end{proof}

\begin{thm}  \label{thm:chev}    
{\sl Let $S$ be a finite simple group and let $p$ be a prime dividing $|S|$. If 
$X$ denotes the set of $p$-elements of $S$, then $X^{70}=S$.}
\end{thm}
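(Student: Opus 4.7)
The plan is to handle distinct families of finite simple groups separately and combine the resulting bounds. For $S \cong \AAA_n$ with $n \geq 5$, Theorem \ref{thm:alt} already gives $X^8 = S$, since the set $Y$ of products of disjoint $p$-cycles appearing in its proof consists of $p$-elements and hence satisfies $Y \subseteq X$. For $S$ a sporadic simple group, there are only finitely many verifications needed, each handled by direct computation with \cite{GAP}.

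For $S$ of Lie type in its defining characteristic $p$, every non-central element of the ambient quasisimple group $G = \GC^F$ is a product of two unipotent (hence $p$-) elements by \cite[Corollary, p. 3661]{EG}. Passing to $S = G/Z(G)$ and noting that $1 = x \cdot x^{-1}$ for any non-trivial $p$-element $x$, we obtain $X^2 = S$ immediately.

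The main case is $S$ of Lie type in some characteristic $\ell \neq p$. Here the plan is to invoke Corollary \ref{cor:regss2}: any regular semisimple conjugacy class $C$ of $S$ satisfies $C^4 = S$. It therefore suffices to find a regular semisimple element $t \in S$ expressible as a product of at most $k$ $p$-elements; then $X^{4k} \supseteq C^4 = S$. For ``good'' primes $p$ (those such that some maximal torus of $S$ contains a regular semisimple $p$-element), $t$ itself is a $p$-element, so $k=1$ works and we obtain $X^4 = S$. For other primes, one exhibits $t$ as a short product of $p$-elements, for instance by selecting a Sylow $p$-subgroup $P$ normalizing a maximal torus $T$ and exploiting the Weyl-type structure of $P/(P \cap T)$: generically, a product of an element of $T \cap P$ with a non-trivial coset representative in $P \setminus T$ produces a regular semisimple element, as can be checked by centralizer-dimension arguments.

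The principal obstacle will be small primes $p = 2, 3$ in cross characteristic where Sylow $p$-subgroups are non-abelian and contain relatively few regular constituents, together with classical groups of very small rank that may require explicit verification via \cite{GAP} or \cite{Chevie}. Provided the uniform bound $k \leq 17$ can be established in all such configurations (with the easier cases of good primes contributing $k = 1$), Corollary \ref{cor:regss2} then yields $X^{4k} \subseteq X^{68} \subseteq X^{70} = S$, the slack in the constant $70$ absorbing the case analysis as well as the handful of sporadic or small-rank exceptions where a direct approach replaces the general Corollary \ref{cor:regss2} argument.
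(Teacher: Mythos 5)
Your high-level strategy coincides with the paper's: handle alternating and sporadic groups separately, use \cite[Corollary, p.\ 3661]{EG} in defining characteristic, and in cross characteristic reduce to producing a regular semisimple element as a short product of $p$-elements and then invoke Corollary~\ref{cor:regss2} to get $C^4 = S$.  However, the crucial step is left unresolved, and the one concrete construction you sketch does not work.

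The problem is your suggestion to take $t \in T \cap P$ and $w \in P \setminus T$ with $P$ a single Sylow $p$-subgroup.  Since both factors lie in $P$, the product $tw$ lies in $P$ as well, so it is a $p$-element.  If $P$ already contains a regular semisimple element, you are in the ``good prime'' case and no product is needed.  If $P$ does not contain a regular semisimple element — which is precisely the case you need to address — then no product of elements within $P$ can produce one.  So the construction is circular: it only produces regular semisimple elements in exactly those cases where you already have them for free.  To make progress you must combine $p$-elements from \emph{different} Sylow subgroups (or more precisely, with incompatible centralizers).  The paper does exactly this.  For $PSL_d(q)$ with $p \nmid q$ and $p > d+1$, it observes that $P$ is abelian and contains an element with distinct eigenvalues on $W := [P,V]$ with $\dim W > \tfrac12 \dim V$; this is regular semisimple inside $SL(W)$, so $X^2$ covers all semisimple elements of $SL(W)$ via the Gow-type argument, and one more factor from $X$ suffices to produce a genuine regular semisimple element of $S$, giving $X^3 \ni t$ regular semisimple and hence $X^{12} = S$.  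For $p \le d+1$ it instead works inside the subgroup $\AAA_{d+1} < S$ and uses the cycle arguments of Lemma~\ref{lem:altinv} to obtain $X^{16} = S$.  These two ideas — working inside a large Levi $GL(W)$, and exploiting the natural $\AAA_{d+1}$-subgroup — are the missing ingredients in your plan.

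A second, lesser gap: you propose to dispose of ``classical groups of very small rank'' by explicit \cite{GAP}/\cite{Chevie} verification.  There are infinitely many such groups (the field size $q$ varies), so finitely many computations cannot close the case.  The paper instead appeals to Lawther–Liebeck \cite{LL}, whose uniform diameter bound for Cayley graphs on conjugacy classes handles all Lie type groups of bounded rank at once; this is also where the constant $70$ ultimately comes from, since the large-rank classical argument yields only $X^{12}$ or $X^{16}$.
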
  

\begin{proof}  
If $S$ is sporadic, this is easily seen from the character tables.
If $S$ is an alternating group, it follows that $X^4=S$ by Lemma \ref{lem:altinv}.
If $S$ is a finite group of Lie type of rank at most $8$, this follows by \cite{LL}.  

So it suffices to prove the result for classical groups (of sufficiently large rank).
We give the proof for the case  $S=PSL_d(q)$ (with a better constant) and leave
the other cases to the reader.

If $p$ divides $q$, then $X^2 = S$ by \cite[Corollary, p. 3661]{EG}.  
So assume that $p$ does not
divide $q$. Let $P$ be a Sylow 
$p$-subgroup of $G$.   If $d \le 3$, then $P$ contains a regular semisimple
element of $S$ and so $X^2$ contains all semisimple elements, whence
$X^4 = S$.   So assume that $d \ge 4$.   If $p \le d + 1$, then 
by the proof of Lemma \ref{lem:altinv},  $X^2$ will contain  either a $d +1$
cycle or $d$-cycle of $H:=\AAA_{d+1} < S$.   If $d + 1$ is odd 
and $p$ does not divide $d+1$, then an $(n+1)$-cycle in $H$ is  a regular semisimple
element of $S$, whence $X^4=S$.  If $d + 1$ is odd and $p$ divides $d+1$,
then similarly, we see that $X^2$ contains a $(d-1)$-cycle which is semisimple
and has all eigenvalues of multiplicity $1$ aside from $1$ which has multiplicity 
$2$.  It follows that $X^4$ contains a regular semisimple element and so
$X^{16}=S$.   The same argument shows that $X^{16}=S$ for $d$ even as well. 

So assume that $p > d+1$. Let $V$ be the natural module for $SL_d(q)$
and lift $P$.  It is a straightforward exercise to show that $P$ contains an element
with distinct eigenvalues on $W:=[P,V]$ and that $\dim W > (1/2) \dim V$.
Thus, $X^2$ contains all semisimple elements of $\SL(W)$ and so
$X^3$ contains a regular semisimple element.  Thus, $X^{12}=S$.
\end{proof} 

A restatement of the previous result in terms of powers is:

\begin{cor} \label{cor:powers}  
{\sl Let $S$ be a finite simple group, and let $d$ be a positive integer such that
some prime $p$ divides $|S|$ but not $d$.  Then every element
of $S$ is a product of at most $70$ $d^{\mathrm {th}}$ powers.}
\end{cor}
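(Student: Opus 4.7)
The plan is to derive Corollary \ref{cor:powers} as an immediate consequence of Theorem \ref{thm:chev}. The key observation is that, under the coprimality hypothesis $\gcd(p,d)=1$, every $p$-element of $S$ is automatically a $d^{\mathrm{th}}$ power in $S$. Indeed, if $x \in S$ has order $p^k$ for some $k \geq 0$, then since $\gcd(d, p^k) = 1$ there exists an integer $e$ with $de \equiv 1 \pmod{p^k}$, and hence $x = x^{de} = (x^e)^d$.

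With this in hand, let $X \subseteq S$ be the set of $p$-elements of $S$. The preceding paragraph shows that $X$ is contained in the set $w(S) = \{g^d : g \in S\}$ of $d^{\mathrm{th}}$ power values. Theorem \ref{thm:chev} asserts that $X^{70} = S$. Therefore
\[
  S = X^{70} \subseteq w(S)^{70},
\]
so every element of $S$ is a product of at most $70$ $d^{\mathrm{th}}$ powers, as required.

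There is essentially no obstacle here beyond invoking Theorem \ref{thm:chev}; the corollary is a formal reinterpretation. The only mild subtlety is verifying that the hypothesis of Theorem \ref{thm:chev} is met, but this is built into the assumption that $p$ divides $|S|$. Thus the proof reduces to the single line pointing out that the coprimality of $p$ and $d$ turns $p$-elements into $d^{\mathrm{th}}$ powers, followed by a direct application of Theorem \ref{thm:chev}.
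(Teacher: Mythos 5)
Your proof is correct and is exactly the argument the paper intends: the authors present Corollary \ref{cor:powers} as ``a restatement of the previous result'' (Theorem \ref{thm:chev}), and your one-line observation that coprimality of $p$ and $d$ makes every $p$-element a $d^{\mathrm{th}}$ power is the only bridge needed.
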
 

In fact, the same proof gives:

\begin{cor} \label{cor:bestex}  
{\sl Let $S$ be a finite simple group of Lie type.
Let $p$ be a prime which is not the characteristic of $S$ and does not
divide the order of a quasi-split torus.  If $d$ is a positive integer and
$p$ divides $\exp(S)/d$, then 
every element of $S$ is a product of at most $70$ $d^{\mathrm {th}}$ powers.}
\end{cor}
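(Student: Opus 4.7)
The plan is to replay the proof of Theorem \ref{thm:chev} with the set of all $p$-elements replaced by a subset $X^\sharp$ consisting of those $p$-elements which are $d^{\mathrm{th}}$ powers in $S$. Write $k := v_p(\exp(S))$ and $j := v_p(d)$; the hypothesis $p \mid \exp(S)/d$ is $k > j$, and every non-identity element of $X^\sharp$ will be a $p$-element of order dividing $p^{k-j}$.

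First I would show that $X^\sharp$ in fact contains \emph{every} $p$-element $g$ of $S$ of order at most $p^{k-j}$. For this it suffices to embed such $g$ into a cyclic subgroup $\langle y\rangle\leq S$ with $|y|=p^k$: the $d^{\mathrm{th}}$-power map on $\langle y\rangle$ surjects onto the unique subgroup of order $p^k/\gcd(p^k,d)=p^{k-j}$, which contains $g$. The hypothesis that $p$ does not divide the order of any quasi-split torus is used precisely here: it forces a Sylow $p$-subgroup of $S$ to sit inside a non-quasi-split maximal torus $T$, whose cyclic factors carrying $p$-torsion are of the form $q^{d_i}\pm 1$ with $d_i>1$ and each contribute the full $p$-primary part of $\exp(S)$. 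A type-by-type inspection across the Dynkin diagrams then shows that every $p$-element of order $\leq p^{k-j}$ embeds into a cyclic $p$-subgroup of order exactly $p^k$, hence lies in $X^\sharp$.

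With this in hand, the proof of Theorem \ref{thm:chev} transfers almost verbatim. The sporadic case is checked from character tables; the Lie-type rank $\leq 8$ case is \cite{LL}; for large-rank classical groups one again uses the $\AAA_{d+1}$ argument via Lemma \ref{lem:altinv} when $p\leq d+1$, and the distinct-eigenvalue-subspace argument on the natural module when $p>d+1$. Each of those constructions produces a $p$-element in the top layer of a Sylow $p$-subgroup — hence in $X^\sharp$ by Step~1 — whose products yield a regular semisimple element inside $(X^\sharp)^3$ or $(X^\sharp)^4$. Corollary \ref{cor:regss2} applied to that regular semisimple class then gives $(X^\sharp)^N = S$ with $N\leq 70$, which translates into the claim that every element of $S$ is a product of at most $70$ $d^{\mathrm{th}}$ powers.

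The main obstacle is the type-by-type verification in Step~1: one must confirm uniformly across classical and exceptional types that the quasi-split-torus hypothesis indeed produces a Sylow $p$-subgroup in which every element of order up to $p^{k-j}$ factors through a cyclic $p^k$-order subgroup, and in particular that no exceptional configuration of $T$ interferes with the eigenvalue counts used in the $p>d+1$ portion of the argument. Once that structural analysis is settled, the combinatorial core of Theorem \ref{thm:chev} carries through unchanged, yielding the stated bound of $70$.
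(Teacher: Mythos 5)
Your proposal follows the same route as the paper: replace the set of all $p$-elements in Theorem~\ref{thm:chev} by the subset of $d^{\mathrm{th}}$-power $p$-elements, and justify that this restricted set is still ``big enough'' using the structure of a Sylow $p$-subgroup. The key structural point (your Step~1) is exactly what the paper asserts, somewhat tersely, as ``By hypotheses, every element of $H:=\Omega_1(P)$ is a $d^{\mathrm{th}}$ power.'' Your elaboration via the embedding of $g$ into a cyclic factor of order $p^k$ of a non-quasi-split maximal torus is the natural way to unpack that assertion, since under the hypothesis a Sylow $p$-subgroup is homocyclic of exponent equal to the $p$-part of $\exp(S)$ when $p$ avoids the Weyl group.

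One point to be careful about: your Step~1 requires the Sylow $p$-subgroup to lie in a maximal torus, which in the $PSL$ case needs $p$ to exceed the matrix size (so that $p\nmid|W|$). You then assert that the $p$-elements produced by the $\AAA_{d+1}$ argument in the regime $p\le d+1$ are also in $X^\sharp$ ``by Step~1,'' but Step~1 is not directly available there. The paper instead handles the $p\le d+1$ case entirely inside $\AAA_{d+1}$: since $p\nmid q-1$ forces the word $x\mapsto x^d$ to be nontrivial on $\AAA_{d+1}$, the argument of Theorem~\ref{thm:alt} applies \emph{within} $\AAA_{d+1}$ and shows that appropriate products of disjoint $p$-cycles (lifted via a $p$-element of larger order in $\AAA_{d+1}$) are already $d^{\mathrm{th}}$ powers of elements of $\AAA_{d+1}$, independently of any torus structure in $S$. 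With that small adjustment -- replacing the appeal to Step~1 by the internal $\AAA_{d+1}$ argument in the low-$p$ regime -- your proof and the paper's coincide in substance.
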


\begin{proof}  We assume that $p$ is not the characteristic.   If the rank of
$S$ is  at most $8$, the result follows by \cite{LL}.  So we may assume
that $S$ is classical.   We give the proof for $PSL_d(q)$.   First suppose
that $p \le d + 1$.  Since $p$ does not divide $q-1$, it follows that 
$x^d$ does not vanish on $\AAA_{d+1} < S$ and we argue as above.

So assume that $p > d+1$.    Then a Sylow $p$-subgroup  $P$ of $S$
is abelian.  Let $V$ denote the natural module for $S$.  So
$[P,V] =W_1 \oplus \ldots \oplus W_m$ where $P$ acts irreducibly on 
each $W_i$.  Since $p > \dim W_i$, it follows that any element of
$P$ that acts non-trivially on $W_i$ also acts irreducibly on $W_i$.
By hypotheses,  every element  of $H:=\Omega_1(P)$ is a $d^{\mathrm {th}}$ power.
Now choose $1 \ne x_i \in H, 1 \le i \le m$ so that the $x_i$ have distinct
eigenvalues (over the algebraic closure).  This is possible since all
$p^{\mathrm {th}}$-roots of $1$ occur as eigenvalues and $m \dim W_1 < p$.
Letting $X$ be the image of the word $x^d$, it follows that $X$ contains all
semisimple elements acting on $[P,V]$.  Since $\dim [P,V] > (1/2) \dim V$,
it follows that $X^3$ contains a regular semisimple element of $S$
and so $X^{12}=S$.
\end{proof} 

{\bf Proof of Corollary \ref{power}.} 
If $|S| \geq m^{8m^2}$ then we are done by
Theorem \ref{main2}. By Theorem \ref{thm:alt} we are also done if $S \cong \AAA_n$.
The statement is obvious if $S$ is abelian.
So we may assume that $|S| < m^{8m^2}$ and $S \not\cong \AAA_n$
(and $S$ is non-abelian). By the assumption,
there is some $1 \neq x \in S$ such that $x$ (and so $x^{-1}$ as well) is a 
$m^{\mathrm {th}}$ power in $S$. Suppose first that $S$ is a sporadic simple group.
As shown in \cite{Z}, the {\it covering number} $\cn(S)$ is at most $6$, and so each 
element $g \in S$ is a product of at most $6$ conjugates of $x$. Next assume
that $S$ is a simple group of Lie type, of untwisted Lie rank $r$. Then
$$2^{r^2} < |S| < m^{8m^2}$$
and so $r < m\sqrt{8\log_2 m}$. Now, according to the main
result of \cite{LL}, every $g \in G$ is a product of at most 
$$40r + 56 < 80m\sqrt{2\log_2 m} + 56 = f(m)$$ 
conjugates of  $x$ or $x^{-1}$, and so we are done. 

\section{Proof of Theorem \ref{main3}}
Now we proceed to prove Theorem \ref{main3}. It suffices to prove
statement (ii) of the Theorem. By choosing $N = N_{w_1,w_2}$ large enough, we 
may ignore all quasisimple groups $G$ with $G/Z(G)$ being a sporadic simple group. 
Next, the case $G/Z(G) \cong \AAA_n$ is already settled by \cite[Theorem 3.1]{LST2}.
Hence we may assume that $S := G/Z(G)$ is a finite simple group of Lie type. Again
by choosing $N$ large enough, we can ignore the cases where $S$ has an 
exceptional Schur multiplier. Thus we may assume that $G = \GC^F$ for a simple
simply connected algebraic group in characteristic $p$ and a (generalized) Frobenius endomorphism
$F~:~\GC \to \GC$. Furthermore, the proof of \cite[Theorem 1.7]{LS} together
with \cite[Proposition 6.4.1]{LST1} 
establish the statement (ii) in the case $\GC$ has bounded rank. It remains to 
deal with the case where $\GC$ has unbounded rank; in particular, $\GC$ is a 
classical group. Now the case $G = Spin_{2n+1}(q)$ follows from 
\cite[Theorem 3.8]{LST2}. Furthermore, the cases where $G \in  \{SL_n(q),SU_n(q)\}$,
respectively $G = Sp_{2n}(q)$, or $Spin^{\pm}_{2n}(q)$ with $q$ even, follow
from Propositions 6.2.4, 6.1.1, and 6.3.7 of \cite{LST1}. 

\medskip
To deal with the remaining case $G = Spin^{\pm}_{2n}(q)$ with $q$ odd, 
we first recall some basic facts from spinor theory, cf.\ \cite{Ch}. Let
$V = \FF_{q}^{2n}$ be endowed with a non-degenerate quadratic form $Q$. The 
{\it Clifford algebra} $\CL(V)$ is the quotient of the tensor 
algebra $T(V)$ by the ideal $I(V)$ generated by $v \otimes v - Q(v)$, $v \in V$
(here we adopt the convention that $Q(v) = (v,v)$ if $(\cdot,\cdot)$ is the
corresponding bilinear form on $V$). 
The natural grading on $T(V)$ passes over to $\CL(V)$ and allows 
us to write $\CL(V)$ as the direct sum of 
its even part $\CL^{+}(V)$ and odd part
$\CL^{-}(V)$. We denote the identity element of $\CL(V)$ by $e$. 
The algebra $\CL(V)$ admits a canonical 
anti-automorphism $\al$, which is defined via 
$$\al(v_{1}v_{2} \ldots v_{r}) = v_{r}v_{r-1} \ldots v_{1}$$ 
for $v_{i} \in V$. The {\it Clifford group} $\Gamma(V)$ is the group of all 
invertible $s \in \CL(V)$ such that $sVs^{-1} \subseteq V$. The action of 
$s \in \Gamma(V)$ on $V$ defines a surjective homomorphism
$\phi~:~\Gamma(V) \to GO(V)$ with $\Ker(\phi) = \FF_{q}^{\times}e$. 
If $v \in V$ is nonsingular, then $-\phi(v) = \rho_{v}$, 
the reflection corresponding to $v$. The {\it special Clifford group} 
$\Gamma^+(V)$ is $\Gamma(V) \cap \CL^{+}(V)$. 
Let $\Gamma_0(V) := \{ s \in \Gamma(V) \mid \al(s)s = e\}$. 
The {\it reduced Clifford group}, or the {\it spin group}, is 
$Spin(V) = \Gamma^+(V) \cap \Gamma_0(V)$. The sequences 
$$\begin{array}{c}
  1 \longrightarrow \FF_{q}^{\times}e \longrightarrow \Gamma^+(V)
   \stackrel{\phi}{\longrightarrow} SO(V) 
    \longrightarrow 1,\\ \\
  1 \longrightarrow \langle -e \rangle \longrightarrow Spin(V) 
    \stackrel{\phi}{\longrightarrow} \Omega(V) \longrightarrow 1\end{array}$$
are exact.

If $A$ is a non-degenerate subspace of $V$, then 
we denote by $C_A$ the subalgebra of $\CL(V)$ generated by all $a \in A$. 
We now clarify the relationship between $C_A$ and the Clifford algebra
$\CL(A)$ of the quadratic space $(A,Q|_A)$. Decompose $V = A \oplus A^{\perp}$.
We will need the following statement:

\begin{lem}\label{spin} 
{\sl Let $(V,Q)$ be a non-degenerate quadratic space over 
a field $\FF_q$ of odd characteristic.
Suppose $A$ is a non-degenerate subspace of dimension $\geq 2$ of $V$, and 
let $C_A$ be the subalgebra of $\CL(V)$ generated by all $a \in A$. 
Then there is a (canonical) algebra isomorphism $\psi~:~\CL(A) \cong C_A$ 
which induces a group isomorphism $Spin(A) \cong C_A \cap Spin(V)$. 
Furthermore, if $g \in Spin(V)$ is such that  
$\phi(g)$ acts trivially on $A^\perp$, then $g \in C_A \cap Spin(V)$.
Finally, if $h \in C_A \cap Spin(V)$ is such that 
$\psi^{-1}(h)$ projects onto $-1_A$ then 
$\phi(h) = \diag(-1_A,1_{A^\perp})$.}
\end{lem}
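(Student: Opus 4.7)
The plan is to deduce everything from the super tensor product decomposition of $\CL(V)$ associated with the orthogonal sum $V = A \oplus A^{\perp}$. Expanding $(a+b)^{2}=Q(a)+Q(b)$ in $\CL(V)$ for $a\in A$, $b\in A^{\perp}$ yields the key anti-commutation relation $ab+ba=0$, and hence a canonical $\ZZ/2$-graded algebra isomorphism $\CL(V)\cong \CL(A)\,\hat\otimes\,\CL(A^{\perp})$. In particular the inclusion $A\hookrightarrow V$ extends to an injective algebra map $\psi\colon \CL(A)\to \CL(V)$ with image $C_{A}$, which gives the first assertion. The map $\psi$ visibly respects the $\ZZ/2$-grading and intertwines the canonical anti-automorphism $\al$ on each side, since $\al$ is defined by product-reversal on vector generators.

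For the induced group isomorphism, observe that if $s\in\Gamma(A)$, then $\psi(s)$ conjugates $A$ into $A\subseteq V$, while the anti-commutation rule sends each $b\in A^{\perp}$ to $\pm b\in V$, so $\psi(s)\in\Gamma(V)$. Restricting to the even part and to the locus $\al(s)s=e$ yields $\psi(Spin(A))\subseteq C_{A}\cap Spin(V)$. Conversely, given $h\in C_{A}\cap Spin(V)$, evenness of $h$ in $C_{A}$ combined with the anti-commutation rule shows that $h$ centralizes $A^{\perp}$; together with $hVh^{-1}\subseteq V$ this forces $hAh^{-1}\subseteq V\cap C_{A}$. A basis argument (using orthogonal bases of $A$ and $A^\perp$) shows that the degree-$1$ part of $C_{A}$ is exactly $A$, so $V\cap C_{A}=A$, and thus $\psi^{-1}(h)\in\Gamma^{+}(A)\cap\Gamma_{0}(A)=Spin(A)$.

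For the third assertion, suppose $g\in Spin(V)$ satisfies $\phi(g)|_{A^{\perp}}=1$. Then $\phi(g)$ preserves $A=(A^{\perp})^{\perp}$, so $\phi(g)=(g_{A},1_{A^{\perp}})$ with $g_{A}\in SO(A)$. Factoring $g_{A}=\rho_{v_{1}}\cdots\rho_{v_{k}}$ with $v_{i}\in A$ yields the same factorization in $O(V)$, so the spinor norms of $g_{A}$ in $O(A)$ and of $(g_{A},1_{A^\perp})$ in $O(V)$ coincide. Since $(g_{A},1_{A^\perp})\in\Omega(V)=\phi(Spin(V))$, we conclude $g_{A}\in\Omega(A)$, hence $g_{A}$ lifts to $\tilde g\in Spin(A)$, and $\psi(\tilde g)\in C_{A}\cap Spin(V)$ satisfies $\phi(\psi(\tilde g))=\phi(g)$. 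Since $\Ker\phi=\langle -e\rangle\subseteq C_{A}$, we conclude $g\in C_{A}\cap Spin(V)$.

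For the final assertion, since $\psi$ is an algebra isomorphism onto $C_{A}$, for any $a\in A$ the element $hah^{-1}$ computed in $C_{A}$ equals the image under $\psi$ of $\psi^{-1}(h)\,a\,\psi^{-1}(h)^{-1}$ computed in $\CL(A)$, so $\phi(h)|_{A}=\phi_{A}(\psi^{-1}(h))=-1_{A}$. Evenness of $h$ in $C_{A}$ together with the anti-commutation rule gives $\phi(h)|_{A^{\perp}}=1_{A^{\perp}}$. The one point requiring genuine care is the spinor-norm compatibility in the third assertion (i.e.\ that trivially extending to $A^{\perp}$ does not alter the spinor norm); everything else amounts to bookkeeping with the super tensor product.
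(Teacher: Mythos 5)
Your proof is correct, but it takes a more self-contained route than the paper's. The paper disposes of the first statement and the structure of $\phi|_{C_A\cap Spin(V)}$ by citing \cite[Lemma 4.1]{LBST2}, which already records that $\psi\colon\CL(A)\cong C_A$ restricts to $Spin(A)\cong C_A\cap Spin(V)$ and that $\phi$ maps $C_A\cap Spin(V)$ onto $X=\{x\in\Omega(V):x|_{A^\perp}=1\}$ with kernel $\langle -e\rangle$; the second statement then follows by the two-fold-cover counting you also use, and the third by the same algebra-isomorphism bookkeeping you give. What you do differently is re-derive the cited facts: you build $\psi$ from the graded tensor decomposition $\CL(V)\cong\CL(A)\,\hat\otimes\,\CL(A^\perp)$, identify $V\cap C_A=A$ by a monomial-basis argument to get the converse containment $C_A\cap Spin(V)\subseteq\psi(Spin(A))$, and — the genuinely substantive extra step — prove the surjectivity of $\phi|_{C_A\cap Spin(V)}$ onto $X$ via spinor-norm compatibility (factoring $g_A$ into reflections along vectors of $A$ and observing that extending by $1_{A^\perp}$ leaves the spinor norm unchanged, so $g_A\in\Omega(A)$ lifts to $Spin(A)$). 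This buys you independence from \cite{LBST2} at the cost of a longer argument; the paper's version is shorter precisely because it outsources these verifications. One small imprecision: when you say ``if $s\in\Gamma(A)$, then \dots\ the anti-commutation rule sends each $b\in A^\perp$ to $\pm b$,'' this tacitly uses that elements of $\Gamma(A)$ are homogeneous (true in odd characteristic, as they are scalar multiples of products of anisotropic vectors), though for the actual application you immediately specialise to $Spin(A)\subseteq\Gamma^+(A)$, where the sign is always $+$ and no issue arises.
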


\begin{proof}
The first statement is just \cite[Lemma 4.1]{LBST2}. For the second 
statement, it was shown by the same
lemma that $\phi$ projects $C_A \cap Spin(V)$ onto the subgroup 
$$X := \{ x \in \Omega(V) \mid x|_{A^{\perp}} = 1_{A^{\perp}}\}$$ 
with kernel $\langle -e \rangle$. By the assumption, $\phi(g)$ belongs to 
$X$. Hence, there are exactly two elements 
$g'$ and $-eg'$ in $C_A \cap Spin(V)$ such that $\phi(g') = \phi(-eg') = \phi(g)$. 
Recall that $\phi$ also projects $G$ onto $\Omega(V)$ with kernel 
$\langle -e \rangle$. It follows that $g \in \{g',-eg'\}$, and so 
$g \in C_A \cap Spin(V)$. 

For the third statement, observe that the 
isomorphism $\psi$ sends $a+I(A)$ (which is identified with $a$ in 
$\CL(A)$) to $a+I(V)$ (which is identified with $a$ in $\CL(V)$) for any 
$a \in A$. By assumption, $h' := \psi^{-1}(h)$ projects onto $-1_A$ (under the 
natural map $\Gamma(A) \to GO(A)$). Hence $h'ah'^{-1} = -a$ for all 
$a \in A$. Applying $\psi^{-1}$, we obtain $hah^{-1} = -a$ for all 
$a \in A$, yielding $\phi(h)|_A = -1_A$. On the other hand, 
$\phi(h)|_{A^\perp} = 1_{A^\perp}$ as $h \in C_A \cap Spin(V)$ and 
$\phi$ maps $C_A \cap Spin(V)$ onto $X$.    
\end{proof}

Using Theorem \ref{D-type} we can prove the following key extension of 
\cite[Proposition 6.3.6]{LST1}:

\begin{pro}\label{most-D}
{\sl Let $w_1$ and $w_2$ be non-trivial words and let $k,l \geq 3$ be two 
coprime odd integers. Fix an integer $v > 0$ such that $l|(kv-1)$. Then 
there exists an integer $L$ such that for all $n = k(2al+v)$ with 
$a \geq L$, $\eps = \pm$, and for all $q$,
$$w_1(Spin^{\eps}_{2n}(q)) w_2(Spin^{\eps}_{2n}(q)) \supseteq 
  Spin^{\eps}_{2n}(q) \setminus Z(Spin^{\eps}_{2n}(q)).$$}
\end{pro}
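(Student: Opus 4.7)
The proof will adapt the strategy used to prove \cite[Proposition 6.3.6]{LST1}, substituting Theorem \ref{D-type} for the covering ingredient used there. The arithmetic conditions on $(k, l, v)$ --- namely $k, l \geq 3$ coprime odd with $l \mid (kv - 1)$ --- together with the specific form $n = k(2al + v)$ are tailored to produce a Shintani-style embedding $\iota : Spin^+_{2m}(q^k) \hookrightarrow Spin^{\eps}_{2n}(q)$, where $m := 2al + v$ (so that $n = km$). This embedding is compatible with the power maps that arise in evaluating $w_1$ and $w_2$, and the cyclotomic structure encoded in $n$ guarantees the existence of suitable $\ppd$-primes dividing the orders of two prescribed maximal tori of $Spin^+_{2m}(q^k)$. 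By adjusting $v$ by a multiple of $2l$ (which leaves both the divisibility condition and the shape $n = k(2al + v)$ intact, at the cost of modifying $a$), we may further arrange $m$ to be even.

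The main technical step, following the quantitative word-map machinery of \cite[Sections 5--6]{LST1} (which ultimately rests on Larsen--Shalev-type character bounds), is to show that there exists an $L$ such that for all $a \geq L$ and each $w_i$, the image $w_i(Spin^{\eps}_{2n}(q))$ contains a $G$-conjugate of a regular semisimple element lying in $\iota(Spin^+_{2m}(q^k))$ whose centralizer there is a maximal torus of type $T^{+,+}_{m-1,1}$ (for $i=1$) or $T^{-,-}_{m-1,1}$ (for $i=2$). These are precisely the torus types appearing in Theorem \ref{D-type}.

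Since $m$ is even and eventually $\geq 4$, Theorem \ref{D-type} applied inside $Spin^+_{2m}(q^k)$ yields that the product of the two corresponding $Spin^+_{2m}(q^k)$-conjugacy classes exhausts $Spin^+_{2m}(q^k) \setminus Z(Spin^+_{2m}(q^k))$. A final transfer argument --- conjugating by elements of $G$ to sweep across the $G$-orbit of $\iota(Spin^+_{2m}(q^k))$, together with a short Lang--Steinberg argument to reconcile the different centers --- lifts this to $w_1(G) w_2(G) \supseteq G \setminus Z(G)$, completing the proof.

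The principal obstacle is the word-image location step: ensuring that $w_i(G)$ meets the prescribed tori in regular semisimple values. This is where the precise arithmetic form of $n = k(2al+v)$ and the growth condition $a \geq L$ do their work, through delicate bounds on character ratios and probabilistic counts of word values lying in fixed tori. The final covering extension is more routine but still requires care in verifying that sweeping by $G$-conjugation exhausts $G \setminus Z(G)$, not merely the non-central elements of a single Shintani subgroup.
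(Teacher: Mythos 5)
Your high-level plan has the right ingredient (use Theorem~\ref{D-type} as the new covering input, locate regular semisimple word values in Spin subgroups), but the way you deploy Theorem~\ref{D-type} contains a genuine gap.

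You propose to embed $H := Spin^+_{2m}(q^k)$ (with $m = 2al+v$, $n = km$) into $G := Spin^{\eps}_{2n}(q)$, find regular semisimple elements in $w_i(G)$ lying in the tori $T^{+,+}_{m-1,1}$ and $T^{-,-}_{m-1,1}$ \emph{of $H$}, apply Theorem~\ref{D-type} \emph{inside $H$}, and then ``sweep by $G$-conjugation.'' That last step does not do what you want. Applying Theorem~\ref{D-type} in $H$ gives $x_1^H x_2^H \supseteq H \setminus Z(H)$, so the conjugation sweep yields only
$w_1(G)w_2(G) \supseteq \bigcup_{g \in G} \bigl(H \setminus Z(H)\bigr)^g$,
which is a thin subset of $G \setminus Z(G)$ (most non-central elements of $G$ lie in no conjugate of $H$). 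There is no Lang--Steinberg argument that can fix this; the product-of-classes statement must be run in $G$ itself. A second, related problem is that the tori of $H$ of type $T^{+,+}_{m-1,1}$ do not correspond under the base-change embedding to tori of $G$ of type $T^{+,+}_{n-1,1}$: the former has order $(q^{k(m-1)}-1)(q^k-1)$, the latter $(q^{km-1}-1)(q-1)$, and $k(m-1) = n-k \neq n-1$ for $k \geq 3$. So even the torus bookkeeping does not carry through.

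The paper's proof avoids both issues. It first dispenses with $\eps=-$, and with $\eps=+$, $v$ odd, by citing \cite[Proposition 6.3.6]{LST1}, reducing to $\eps=+$ with $v$ even. Since $l \mid (kv-1)$ forces $l \mid (n-1)$, one uses base-change embeddings
$i^{\pm}\colon Spin^{\pm}_{2l}(q^{(n-1)/l}) \hookrightarrow Spin^{\pm}_{2n-2}(q) \hookrightarrow G = Spin^+_{2n}(q)$.
The word-location machinery of \cite[\S\S5--6]{LST1} (this is where $L$ comes from) produces $x_i \in w_i(G)$ that, after passing through the two embeddings, are regular semisimple in tori of $G$ \emph{itself} of the required types $T^{+,+}_{n-1,1}$ (for $x_1$) and $T^{-,-}_{n-1,1}$ (for $x_2$). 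Then Theorem~\ref{D-type} applied directly to $G$ gives $x_1^G x_2^G \supseteq G \setminus Z(G)$, hence $w_1(G)w_2(G) \supseteq G \setminus Z(G)$. In short: you should locate the two word values inside the correct maximal tori of $G$ (not of a smaller subgroup) and apply the covering theorem to $G$, rather than covering a subgroup and trying to propagate by conjugation.
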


\begin{proof}
Note that the case $\eps = -$ and the case where $\eps = +$ but $v$ is 
odd are already covered by \cite[Proposition 6.3.6]{LST1}. So we may 
assume that $\eps = +$ and $2|v$. 
Observe that $l|(n-1)$ for any $n = k(2al+v)$. As in the proof 
of \cite[Proposition 6.3.6]{LST1}, there exists $L$ depending on $k,l$,
$w_1$, and $w_2$, such that for all $n = k(2al+v)$ with $a > L$, 
$w_1(Spin^+_{2l}(q^{(n-1)/l}))$ contains a regular semisimple element $x_1$ 
of type $T^+_{n-1}$ in 
$$i^{+}(Spin^+_{2l}(q^{(n-1)/l})) < Spin^{+}_{2n-2}(q),$$ 
and $w_2(Spin^{-}_{2l}(q^{(n-1)/l}))$ contains a regular semisimple element 
$x_2$ of type $T^{-}_{n-1}$ in 
$$i^{-}(Spin^{-}_{2l}(q^{(n-1)/l})) < Spin^{-}_{2n-2}(q).$$
(Here $i^{\pm}$ are natural embeddings by base change.)  
Note that, under the natural embedding 
$Spin^{+}_{2n-2}(q) \hookrightarrow G := Spin^{+}_{2n}(q)$, 
$x_1$ becomes a regular semisimple element of type $T^{+,+}_{n-1,1}$ of 
$Spin^{+}_{2n}(q)$. Similarly, under the natural embedding 
$Spin^{-}_{2n-2}(q) \hookrightarrow G$, 
$x_2$ becomes a regular semisimple element of type $T^{-,-}_{n-1,1}$ of 
$Spin^{+}_{2n}(q)$. By Theorem \ref{D-type}, 
$x_1^G\cdot x_2^G \supseteq G \setminus Z(G)$, and so we are done.
\end{proof}

Now Theorem \ref{main3} follows from

\begin{thm}\label{D-case}
{\sl Let $w=w_1w_2$, where $w_1$ and $w_2$ are non-trivial disjoint words. Then 
there is an integer $D = D_{w_1,w_2}$ such that for all 
$G = Spin^{\eps}_{2n}(q)$ with  $n > D$, $q$ odd, and $\eps = \pm$, 
we have $w(G) \supseteq G \setminus Z(G)$.}
\end{thm}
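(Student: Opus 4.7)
The plan is to reduce Theorem \ref{D-case} to Proposition \ref{most-D} via a covering-system argument on $n$, together with an embedding argument for a sparse exceptional family.

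First I would exploit the disjointness of $w_1$ and $w_2$: since they involve disjoint sets of variables, every value of $w = w_1 w_2$ is a product of an independent value of $w_1$ and a value of $w_2$ (and conversely). Hence $w(G) = w_1(G)\cdot w_2(G)$ as subsets of $G$, so it suffices to show $w_1(G)\cdot w_2(G) \supseteq G \setminus Z(G)$ for every $G = Spin^{\eps}_{2n}(q)$ with $n > D$, $q$ odd, and $\eps = \pm$.

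Next, Proposition \ref{most-D} (handling $\eps = +$ with $v$ even) together with \cite[Proposition 6.3.6]{LST1} (handling $\eps = -$ and $\eps = +$ with $v$ odd) already establishes this inclusion for every triple $(k,l,v)$ with $k,l\ge 3$ coprime odd integers and $v > 0$ satisfying $l\mid(kv-1)$, as soon as $n = k(2al+v)$ for some $a \ge L(w_1,w_2,k,l,v)$. For fixed $(k,l,v)$ this supplies an arithmetic progression of $n$ with common difference $2kl$. The task is to select finitely many triples $(k_i,l_i,v_i)$ whose progressions jointly cover every sufficiently large $n$, and to take $D$ to be the maximum of the corresponding thresholds. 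The key observation is that the shape $n = k(2al+v)$ forces both $k\mid n$ and $l\mid(n-1)$; conversely, given any $n$ that admits an odd prime factor $k \ge 3$ and an odd prime factor $l\ge 3$ of $n-1$ (automatically coprime to $k$ since $\gcd(n,n-1)=1$), one solves uniquely for $v$ modulo $2l$ and obtains $a$ arbitrarily large once $n$ is large. Thus outside the sparse exceptional family of $n = 2^s$ (no odd prime factor of $n$) and $n = 2^s+1$ (no odd prime factor of $n-1$), a finite list of triples drawn from a bounded supply of small odd primes suffices.

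The main obstacle is handling the exceptional $n$. The plan here is to use the natural embedding
$$Spin^{\eps_1}_{2m}(q)\cdot Spin^{\eps_2}_{2(n-m)}(q) \hookrightarrow Spin^{\eps}_{2n}(q)$$
coming from an orthogonal decomposition $V = V_1 \perp V_2$, with $m$ chosen in one of the good progressions produced above and $n - m$ bounded (for instance $m = n-2$, which lies in the good set once $n$ is of exceptional shape and large). For $g \in G \setminus Z(G)$ I would split into two cases: if $g$ has a fixed (or nearly fixed) subspace of codimension at most $2(n-m)$, then up to conjugacy $g$ lies in (a central extension of) the image of $Spin^{\eps_1}_{2m}(q)$ and the result transfers from that smaller group via the spinor set-up of Lemma \ref{spin}; otherwise $g$ is sufficiently "generic" that character-ratio estimates analogous to Lemmas \ref{bound1} and \ref{bound2}, applied to the unipotent characters that survive weak orthogonality of the relevant tori, make the Frobenius-type sum (\ref{count}) strictly positive. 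The hardest step will be obtaining uniform character-ratio bounds on the exceptional $n$ while correctly lifting word values through the spin covering, but the spinor-theoretic machinery recalled in Lemma \ref{spin} together with the weak-orthogonality argument of Lemma \ref{tori} and the character estimates in the proof of Theorem \ref{D-type} supply all the tools needed.
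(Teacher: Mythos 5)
Your proposal correctly identifies Proposition \ref{most-D} and \cite[Proposition 6.3.6]{LST1} as the engine, and the observation that $w(G)=w_1(G)\cdot w_2(G)$ for disjoint words is fine, but the covering-system strategy at the heart of your argument does not work, and you omit two essential reductions that the paper relies on.

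The central gap is the covering argument. The progression $n=k(2al+v)$ forces $k\mid n$; consequently, if you choose a \emph{finite} list of triples $(k_i,l_i,v_i)$, then any large $n$ with no odd prime factor in $\{k_1,\ldots,k_m\}$ (for instance $n$ a large prime, or $n=2p$ with $p$ a large prime) is missed. Your "sparse exceptional family $\{2^s, 2^s+1\}$" is therefore wrong: the uncovered set is generic, not sparse. You try to rescue the construction by taking $k$ to be an odd prime factor of $n$ itself, but then the threshold $L=L(w_1,w_2,k,l,v)$ from Proposition \ref{most-D} grows with $k$, so no uniform $D$ emerges. Your fallback embedding $Spin^{\eps_1}_{2m}(q)\cdot Spin^{\eps_2}_{2(n-m)}(q)\hookrightarrow Spin^{\eps}_{2n}(q)$ with $m=n-2$ does not help either, since $n-2$ need not land in the good set for the same reason, and it can only contain $g\in G$ that stabilize an orthogonal decomposition of the required shape -- which is false for generic $g$. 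The character-ratio sentence for "generic" $g$ is not a proof: the whole difficulty is to produce suitable regular semisimple elements \emph{inside} $w_1(G)$ and $w_2(G)$ for arbitrary $n$, and Theorem \ref{D-type} only gives positivity of the Frobenius sum once you already have such elements in the correct tori.

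The paper's actual route is different and avoids the covering issue entirely. For \emph{fixed} $k,l\geq 3$ coprime odd and coprime to $2M$ (the constant from \cite[Proposition 6.3.2]{LST1}) it handles all large $n$ at once. First, one reduces to elements $g$ of bounded support $\leq B$, using that $w_1(G)$ and $w_2(G)$ contain regular semisimple elements of types $T^{+,\eps}_{a,n-a}$ and $T^{-,-\eps}_{a+1,n-a-1}$ and then applying \cite[Proposition 3.3.1 and Theorem 1.2.1]{LST1}; your proposal skips this step but cannot do without it. For such $g$, $\phi(g)$ has a primary eigenvalue $\lambda=\pm 1$. If $\lambda=1$, Lemma \ref{spin} puts $g$ inside a $Spin_{2n-1}(q)$ and one invokes the type-$B$ result \cite[Theorem 3.8]{LST2}. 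If $\lambda=-1$, one writes $V=\tV\oplus\tU$ with $\phi(g)|_{\tV}=-1_{\tV}$, $\dim\tV=2yM$ of type $+$, and $\dim\tU=k(2xl+v)$ with $x>L$: this is always possible for $n$ large because the two moduli $2M$ and $2kl$ are essentially coprime, so $\tV$ absorbs the congruence obstruction rather than requiring $n$ itself to lie in a progression. Then $h\in w(C_{\tV}\cap G)$ lying over $-1_{\tV}$ exists by \cite[Proposition 6.3.2]{LST1}, $gh^{-1}$ lies in $C_{\tU}\cap G\cong Spin(\tU)$ and is non-central, Proposition \ref{most-D} gives $gh^{-1}\in w(C_{\tU}\cap G)$, and commutativity of the two Clifford subgroups finishes. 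You should replace the covering idea with this "absorb the remainder in a $+$-type block of even dimension $2yM$" decomposition and add the bounded-support reduction and the $\lambda=1$ case.
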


\begin{proof}
1) Let $V = \FF^{2n}_{q}$ be a quadratic space with quadratic form $Q$ corresponding to
$G \cong Spin(V)$, and consider the canonical projection
$\phi~:~G \to \Omega(V)$. We will also denote the central element $-e$ of
$Spin(V)$ by $z$. For any $g \in G$, by the {\it support $\supp(g)$ of $g$} we mean 
the support $\supp(\phi(g))$ of the element $\phi(g) \in \Omega(V)$.
 
We will follow in parts the proof of \cite[Proposition 6.3.7]{LST1}.
As shown in part 1) of the proof of \cite[Proposition 6.3.5]{LST1}, 
if $n$ is sufficiently large then $w_1(G)$ and $w_2(G)$ contains regular semisimple
elements $t_1$ and $t_2$ of type $T^{+,\eps}_{a,n-a}$ and $T^{-,-\eps}_{a+1,n-a-1}$, 
respectively, with $a$ odd and bounded. Arguing as in part 2) of the proof of 
\cite[Proposition 6.3.5]{LST1} and using Proposition 3.3.1 and Theorem 1.2.1 of 
\cite{LST1}, we can reduce to the case of elements $g$ of bounded support $\le B$
(where $B$ depends on $w_1$, $w_2$).
Thus it suffices to prove that if $g \in G \setminus Z(G)$ is of bounded
support $\leq B$ and $n$ is sufficiently large, then $g \in w(G)$. 

\smallskip
2) Assuming $n \geq B+2$, we see that $\phi(g)$ has a (unique) primary 
eigenvalue $\lambda = \pm 1$. By \cite[Lemma 6.3.4]{LST1}, $g$ fixes an orthogonal 
decomposition $V = U \oplus W$, where $\phi(g)|_{U} = \lambda \cdot 1_{U}$, and 
$\dim U \geq 2n-2B \geq 4$. Suppose $\lambda = 1$. Then we can write 
$V = A \oplus A^\perp$, where $A^\perp$ is a $1$-dimensional non-degenerate 
subspace of $U$. By Lemma \ref{spin}, $g \in X := C_A \cap Spin(V)$ and 
$X \cong Spin(A) = Spin_{2n-1}(q)$. 
By \cite[Theorem 3.8]{LST2}, $g \in w(X) \subseteq w(G)$ if $n$ is large enough.

\smallskip  
3) It remains to consider the case $\lambda = -1$. By 
\cite[Proposition 6.3.2]{LST1}, there exists an even $M$
(depending on $w_1,w_2$) such that, for any $b \geq 1$, $w(Spin^{+}_{2bM}(q))$ 
contains an element which projects onto $-I$ (negative the identity transformation 
on $\FF_q^{2bM}$). Fix coprime odd integers $k,l \geq 3$ which are coprime to $2M$. 
Also, fix an integer $v > 0$ such that $l|(kv-1)$ and $2|(n-v)$. Then by 
Proposition~\ref{most-D}, there exists $L \geq B$ (depending on $w_1,w_2$) such that
$$w(Spin^{\gam}_{2m}(q)) \supseteq Spin^{\gam}_{2m}(q) \setminus 
  Z(Spin^{\gam}_{2m}(q))$$
for all $m = k(2al+v)$ with $a \geq L$ and all $\gam = \pm$.

Now assume that $n > kl(2L+M)+kv$. Arguing as in the proof of 
\cite[Proposition 6.3.7]{LST1}, we see that $g$ preserves the orthogonal 
decomposition $V = \tV \oplus \tU$, where
$\dim \tV = 2yM$ for some integer $y \geq 1$, $\tV$ is of type $+$, 
$\phi(g)|_{\tV} = -1_{\tV}$, 
$\dim \tU = k(2xl+v)$ for some integer $x > L$, and $g$ has at least two eigenvalues 
$-1$ on $\tU$. As mentioned above, by \cite[Proposition 6.3.2]{LST1},
there is some element $h' \in w(Spin(\tV))$ that lies above $-1_{\tV}$. 
By Lemma \ref{spin}, there is a group isomorphism 
$$\psi~:~Y_1 := C_{\tV} \cap G \cong Spin(\tV),$$ 
and furthermore, $h := \psi^{-1}(h')$ belongs to $w(Y_1)$ and satisfies  
$$\phi(h) = \diag(1_{\tU},-1_{\tV}).$$ 

Now $gh^{-1}$ fixes the decomposition $V = \tV \oplus \tU$ and acts trivially on
$\tV$. By Lemma \ref{spin},
$$gh^{-1} \in Y_2 := C_{\tU} \cap G \cong Spin(\tU).$$
Clearly, $gh^{-1}$ and $g$ have the same action on $\tU$ and so they both have
at least two eigenvalues $-1$ on $\tU$ by the construction of $\tU$ and $\tV$. 
If $\phi(gh^{-1})|_{\tU} = -1_{\tU}$, then
$\phi(g)= -1_{V}$ and so $g \in Z(G)$, contrary to the choice of $g$. 
So we may assume that $\phi(gh^{-1})|_{\tU}$ is not scalar.  Thus 
$gh^{-1} \in Y_2 \setminus Z(Y_2)$. 
Since $x > L$, by Proposition \ref{most-D} applied to 
$Y_2 \cong Spin(\tU)$ we have $gh^{-1} \in w(Y_2)$. 
Finally, since $Y_1$ and $Y_2$ commute by \cite[Lemma 6.1]{TZ2}, we conclude that
$$g = gh^{-1} \cdot h \in w(Y_2)w(Y_1) \subseteq w(G),$$
as stated.
\end{proof}

\end{document}